\newtheorem{theorem}{Theorem}
\newtheorem{lemma}[theorem]{Lemma}
\newtheorem{corollary}[theorem]{Corollary}
\newtheorem{proposition}[theorem]{Proposition}
\newtheorem{example}[theorem]{Example}
\numberwithin{theorem}{section}
\numberwithin{equation}{section}
\title{ A class of curvature type equations}
\dedicatory{Dedicated to Duong H. Phong on the occasion of his 65th birthday}
\author{Pengfei Guan}
\address{Department of Mathematics and Statistics\\ McGill University \\ Montreal, Canada}
\email{guan@math.mcgill.ca}
\author{Xiangwen Zhang}
\address{Department of Mathematics\\ University of California, Irvine \\ California, US}
\email{xiangwen@math.uci.edu}
\date{\today}
\thanks{Research of the first author was supported in part by an NSERC Discovery Grant. Research of the second author was supported by the NSF under Grant DMS-1809582.}
\begin{document}

\begin{abstract}
 In this paper, we study the solvability of a general class of fully nonlinear curvature equations, which can be viewed as generalizations of the equations for Christoffel-Minkowski problem in convex geometry. We will also study the Dirichlet problem of the corresponding degenerate equations as an extension of the equations studied by Krylov.
 \end{abstract}

\maketitle


\section{Introduction}
\par
In this paper, we consider the solvability of the following fully nonlinear elliptic equation on sphere $\mathbb S^n$,
\begin{eqnarray}\label{equ-s}
\sigma_k(W_u(x))+ \alpha(x)\sigma_{k-1}(W_u(x)) =\sum_{\ell=0} ^{k-2} \alpha_{\ell}(x) \, \sigma_{\ell}(W_u(x)), \  \ x\in \mathbb S^n
\end{eqnarray}
with $W_u(x) = u_{ij}(x) + u(x)\, \delta_{ij}$ being the spherical hessian matrix of the unknown function $u:\, \mathbb S^n \rightarrow \mathbb R$. Here $u_{ij}$ are the second order covariant derivatives with respect to any orthonormal frame $\{e_1, e_2, \cdots, e_n\}$ on $\mathbb S^n$ and $\delta_{ij}$ is the standard Kronecker symbol. $\alpha(x)$ and $\alpha_{\ell}(x)>0$ with $\ell= 0, 1\cdots, k-2$ are given smooth functions on $\mathbb S^n$. $\sigma_m(A)$ denotes the $m$-th elementary symmetric function of an $n\times n$ symmetric matrix $A$ given by
\[
\sigma_m(A) = \sigma_m(\lambda(A)) = \sum_{i_1 < i_2< \cdots <i_m} \lambda_{i_1} \, \lambda_{i_2} \, \cdots \lambda_{i_m}
\]
where $\lambda(A)= (\lambda_1, \lambda_2, \cdots, \lambda_n)$ denotes the eigenvalues of the matrix $A$ for $1\leq m \leq n$ and $\sigma_0(A) =1$.

\

\par
Our interest on the solvability of equation (\ref{equ-s}) is motivated from the study of convex geometry. The classical Christoffel-Minkowski problem is a problem of finding a convex hypersurface in $\mathbb R^{n+1}$ with the $k$-th symmetric function of the principal radii prescribed on its outer normals. It corresponds to finding convex solutions of the following standard Hessian equations
\begin{eqnarray*}
 \sigma_k (W_u(x) ) = \phi(x), \ \ x\in \mathbb S^n
\end{eqnarray*}
with the positive definite condition $W_u>0$ on $\mathbb S^n$. In the case $k=1$, it is the equation for the Christoffel problem and it was solved by Firey \cite{Firey1, Firey2} and Berg \cite{Berg}. When $k=n$, this is the famous Minkowski problem and it has been settled by the works of Minkowski \cite{Mink}, Alexandrov \cite{Alex}, Lewy \cite{Lewy}, Nirenberg \cite{Nirenberg0}, Pogorelov \cite{Pog} and Cheng-Yau \cite{Cheng-Yau}. The intermediate case was solved by Guan-Ma \cite{Guan-Ma}. It is obvious that the Hessian equation corresponds to the case that $\alpha(x) \equiv 0$ and $\alpha_\ell (x) \equiv 0$ for $1\leq \ell \leq k-2$ in equation (\ref{equ-s}).

\medskip

\noindent
Another classical problem from convex geometry is to find convex hypersurfaces in $\mathbb R^{n+1}$ whose Weigarten curvatures is prescribed as a function defined on $\mathbb S^n$ in terms of the inverse Gauss map. It corresponds to find convex solutions of the quotient type equation
\[
{\sigma_n\over \sigma_{n-k}}(W_u(x)) = \phi(x), \ \ x \in \mathbb S^n,
\]
where $\phi(x)$ is a given positive smooth function on $\mathbb S^n$. This problem has been extensively studied and important progress has been made in Guan-Guan \cite{Guan-Guan} and Guan-Li-Li \cite{Guan-Li-Li}. One finds that the above equation is also contained in the frame of equation (\ref{equ-s}) if we move $\sigma_{n-k}(W_u)$ to the right hand side of the equation.

\medskip

\par

Various extensions of the Christoffel-Minkowski problem have been discussed in the literature of convex geometry. For example, the problem of prescribing convex combination of area measures was proposed in \cite{Schneider}. This type of problem leads to differential equation of the form
\begin{eqnarray}
 \sigma_k (W_u(x) )+\sum_{i=1}^{k-1} \alpha_i\,  \sigma_i(W_u(x)) = \phi(x), \ \ x\in \mathbb S^n
\end{eqnarray}
where $\alpha_i$ with $i=1,\cdots, k-1$ are nonnegative constants. The existence and uniqueness of convex solutions to this equation is still open if $\sum_{i=1}^{k-1} \alpha_i>0$ in general (we refer the discussion of uniqueness in the notes of section 7.2 in \cite{Schneider}). It would be interesting to consider the existence of convex hypersurfaces with prescribing more general functions on curvatures or principal radii. For that, we would like to investigate the solvability of equation (\ref{equ-s}) on sphere. In the first part of the paper, we will prove the following existence result.

\begin{theorem}\label{existence proposition}
Assume that $\alpha_\ell(x) \in C^{m, 1}(\mathbb S^n)$, with $m\geq 1$ and $0\leq \ell \leq k-2$, are positive functions and $\alpha(x)\in C^{m, 1}(\mathbb S^n)$. Suppose there is an automorphic group $\mathcal G$ of $\mathbb S^n$ that has no fixed points. If $\alpha(x), \alpha_\ell(x)$ are invariant under $\mathcal G$, i.e., $\alpha\left(g(x)\right) = \alpha(x)$ and $\alpha_{\ell}\left(g(x)\right) = \alpha_\ell(x)$ for all $g\in \mathcal G$ and $x\in \mathbb S^n$, then there exists a $\mathcal G$-invariant admissible solution $u\in C^{m+2, \gamma}, \forall\,  0<\gamma<1$, such that $u$ satisfies equation \eqref{equ-s}.
\par
Moreover, there is a positive constant $C$ depending only on $\inf_{\mathbb S^n} \alpha_\ell(x)$, $||\alpha_\ell ||_{C^{m, 1}(\mathbb S^n)}$ and $||\alpha||_{C^{m, 1}(\mathbb S^n)}$ such that
\begin{eqnarray}\label{est-m2}
|| u ||_{C^{m+2, \gamma}(\mathbb S^n)} \leq C.
\end{eqnarray}
\end{theorem}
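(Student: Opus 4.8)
The plan is to use the continuity method, the analytic core being the \emph{a priori} estimate \eqref{est-m2}. First I would put \eqref{equ-s} in divided form. On the admissible class $\mathcal S=\{u\in C^{2}(\mathbb S^{n}):W_u(x)\in\Gamma_{k-1}\ \forall x\}$, where $\Gamma_{k-1}=\{\lambda:\sigma_{1}(\lambda)>0,\dots,\sigma_{k-1}(\lambda)>0\}$ is the G\r{a}rding cone, one has $\sigma_{k-1}(W_u)>0$, so \eqref{equ-s} is equivalent to
\[
F[u]:=\frac{\sigma_{k}(W_u)}{\sigma_{k-1}(W_u)}-\sum_{\ell=0}^{k-2}\alpha_{\ell}(x)\,\frac{\sigma_{\ell}(W_u)}{\sigma_{k-1}(W_u)}+\alpha(x)=0 .
\]
The point of this normalization is structural: on $\Gamma_{k-1}$ the function $\sigma_{k}/\sigma_{k-1}$ is concave with positive definite first derivatives, and so is each $-\sigma_{\ell}/\sigma_{k-1}$ ($0\le\ell\le k-2$), since $\sigma_{\ell}/\sigma_{k-1}=\big((\sigma_{k-1}/\sigma_{\ell})^{1/(k-1-\ell)}\big)^{-(k-1-\ell)}$ is a negative power of a positive concave function. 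As $\alpha_{\ell}>0$, the operator $F$ is therefore elliptic and concave in $W$ on all of $\mathcal S$. Moreover, along the solution set the right side of \eqref{equ-s} is $\ge\inf\alpha_{0}>0$, so the Newton--MacLaurin inequalities force $\sigma_{k-1}(W_u)\ge c_{0}>0$; hence once a $C^{2}$ bound is available the equation is uniformly elliptic.

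\textbf{Continuity path and openness.} For $t\in[0,1]$ I would replace $\alpha,\alpha_{\ell}$ by $(1-t)\bar\alpha+t\alpha$ and $(1-t)\bar c_{\ell}+t\alpha_{\ell}$, where $\bar\alpha$ and the positive constants $\bar c_{\ell}$ are chosen so that the constant $u\equiv R_{*}$ solves the $t=0$ equation; this amounts to the polynomial $\binom{n}{k}R^{k}+\bar\alpha\binom{n}{k-1}R^{k-1}-\sum_{\ell}\bar c_{\ell}\binom{n}{\ell}R^{\ell}$ having a positive root, which it does as it equals $-\bar c_{0}<0$ at $R=0$ and tends to $+\infty$ as $R\to\infty$. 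The deformed coefficients stay positive, $C^{m,1}$, and $\mathcal G$-invariant. Let $I=\{t:$ the $t$-equation has a $\mathcal G$-invariant admissible solution in $C^{m+2,\gamma}\}$; then $0\in I$. For openness I would apply the implicit function theorem in the spaces of $\mathcal G$-invariant Hölder functions: the linearization $L\varphi=\sum F^{ij}(W_u)(\varphi_{ij}+\varphi\,\delta_{ij})$ is elliptic, and the $\mathcal G$-invariance is exactly what makes it invertible there — a $\mathcal G$-invariant function in its kernel reduces (using the variational structure of \eqref{equ-s}) to a first spherical harmonic $\langle a,x\rangle$, for which $W_{\langle a,x\rangle}\equiv 0$, and $\mathcal G$-invariance forces $a$ to be a common fixed vector of $\mathcal G$, hence $a=0$ since $\mathcal G$ has no fixed point on $\mathbb S^{n}$. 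Thus $L$ is an isomorphism of the invariant subspaces by the Fredholm alternative, and $I$ is open.

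\textbf{A priori estimates and closedness.} It remains to prove \eqref{est-m2} for $\mathcal G$-invariant admissible solutions, with the stated dependence; note all deformed coefficients have $\inf$'s bounded below and $C^{m,1}$ norms bounded above uniformly in $t$, so closedness of $I$ will follow from \eqref{est-m2}. The estimates I would carry out, in order: (i) $C^{0}$: evaluating \eqref{equ-s} at $\max u$ and $\min u$ and using $W_u\le(\max u)I$ resp.\ $W_u\ge(\min u)I$ together with $\alpha_{\ell}>0$ already gives $\max u>0$ and one-sided control; the two-sided bound then comes from the $\mathcal G$-invariance, adapting the support-function argument of Guan--Guan \cite{Guan-Guan} to the admissible (not necessarily convex) setting — the invariance is what rules out the noncompact family $u\mapsto u+\langle a,x\rangle$ of solutions. (ii) $C^{1}$: a gradient estimate from the $C^{0}$ bound, the sign $\sigma_{1}(W_u)=\Delta u+nu>0$, and again the group action. (iii) $C^{2}$: the global second-order estimate, obtained by the maximum principle applied to a test quantity such as $\log\lambda_{\max}(W_u)$ plus a controlled lower-order term, exploiting the concavity of $F$; combined with $\sigma_{k-1}(W_u)\ge c_{0}$ this gives uniform ellipticity. (iv) Higher regularity: Evans--Krylov yields $C^{2,\gamma}$ from concavity and uniform ellipticity, and Schauder bootstrapping (using $\alpha,\alpha_{\ell}\in C^{m,1}$) upgrades it to \eqref{est-m2}. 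Hence $I=[0,1]$ and $1\in I$, which is the theorem. The step I expect to be the main obstacle is the $C^{0}$ estimate (and the attendant $C^{1}$ estimate), because this is precisely where the hypotheses on $\mathcal G$ must be exploited to overcome the intrinsic noncompactness of the solution set; the global $C^{2}$ estimate, while standard in spirit, is the heaviest computation.
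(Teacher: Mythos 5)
Your overall skeleton (reduce to the quotient form $\sigma_k/\sigma_{k-1}-\sum_\ell\alpha_\ell\,\sigma_\ell/\sigma_{k-1}$, verify ellipticity and concavity on $\Gamma_{k-1}$, get $C^2$, Evans--Krylov, Schauder) matches the paper, but your openness step contains a genuine gap. You run a continuity path and invert the linearization $L\varphi=\sum G^{ij}(W_u)(\varphi_{ij}+\varphi\,\delta_{ij})$ at an \emph{arbitrary} solution $u$ along the path, claiming that any $\mathcal G$-invariant kernel element ``reduces (using the variational structure of \eqref{equ-s}) to a first spherical harmonic.'' There is no known variational or self-adjoint structure for this operator that yields such a kernel characterization: the only kernel elements one can exhibit are the first spherical harmonics coming from the invariance $u\mapsto u+\langle a,x\rangle$, but nothing rules out additional kernel at a general admissible $u$ with non-constant coefficients $\alpha_\ell(x)$. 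This is exactly the obstruction the paper points out (``it is difficult to verify the kernel of $L$''), and it is why the paper abandons the continuity method and instead uses degree theory: the deformation \eqref{deformation} connects the given equation to the constant-coefficient model $G_0(W_u)=G_0(I_n)$, for which the uniqueness theorem of Guan--Ma--Zhou (Proposition \ref{uniqueness}) identifies $u\equiv 1$ as the unique $\mathcal G$-invariant admissible solution, and the spectral computation needed for the degree ($L_0=\nu(\Delta+n)$, one positive eigenvalue of multiplicity one, so $\deg=-1$) is done only at this explicit model solution. Without either that degree argument or a proof of kernel triviality along the whole path, your set $I$ is not shown to be open, and the proof does not close.

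A secondary, smaller issue: you order the estimates $C^0\to C^1\to C^2$ and leave the $C^0$ bound (which you yourself flag as the main obstacle) as a vague adaptation of Guan--Guan using the group action. The paper's mechanism is more specific and worth internalizing: because the equation depends on $u$ only through $W_u=u_{ij}+u\,\delta_{ij}$, the maximum-principle bound on $H=\Delta u+nu$ (Proposition \ref{c2-non-deg}) is independent of $\|u\|_{C^0}$; the $C^0$ bound is then obtained afterwards by a blow-up argument, where the normalized functions $v^m=u^m/\|u^m\|_{L^\infty}$ have $W_{v^m}\to 0$, the limit solves $\Delta v+nv=0$, and the orthogonality to $\mathrm{span}\{x_1,\dots,x_{n+1}\}$ (automatic for $\mathcal G$-invariant functions) forces $v\equiv 0$, a contradiction. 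In particular the invariance is not needed for any of the a priori estimates, only for the degree computation.
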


\medskip

We remark that the $\mathcal G$-invariant assumption in the above theorem is used to prove the existence of solution by degree theory, which is in the similar case as \cite{Guan-Guan}. As explained in \cite{Guan-Guan}, such an assumption could be dropped if necessary geometric obstructions to the existence of solutions were at hand as all necessary a priori estimates for equation (\ref{equ-s}) are established without such invariance requirement.

\smallskip

\par
On the other hand, the solution obtained in the above theorem is the {\it admissible solution} in the sense that $\lambda(W_u)\in \Gamma_{k-1}$ cone with
\[
\Gamma_{k-1} = \{\lambda\in \mathbb R^n \, |\, \sigma_1(\lambda)>0, \cdots, \sigma_{k-1}(\lambda)>0\}.
\]
 This means the solution is known to be in the $\Gamma_{k-1}$ cone, but not necessarily in the convex cone $\Gamma_n=\{\lambda\in \mathbb R^n \, |\, \sigma_1(\lambda)>0, \cdots, \sigma_{n}(\lambda)>0\}$. Therefore, we can not fully recover the existence of convex hypersurface by using this existence result. The main ingredient missing here is a so-called {\it constant rank theorem} for equation (\ref{equ-s}) on $\mathbb S^n$. Such a theorem was established for the Hessian equation by Guan-Ma \cite{Guan-Ma} and it was also the key to obtain the convex solution. In their proof, homogeneous property of the equation plays important role there. It would be an interesting question to prove the constant rank theorem for fully nonlinear equations on $\mathbb S^n$ which are in-homogeneous.

\

Because of its structure as a combination of elementary symmetric functions, equation (\ref{equ-s}) is also interesting from the PDE point of view. Such type of equations arise naturally from many important geometric problems. One example is the so-called {\it Fu-Yau equation} arising from the study of the Hull-Strominger system in theoretical physics, which is an equation that can be written as the linear combination of the first and second elementary symmetric functions,
\[
\sigma_1 (i\partial\bar\partial (e^u+ \alpha' e^{-u} )) + \alpha' \, \sigma_2(i\partial\bar\partial u) = \phi
 \]
 on n-dimensional compact K\"ahler manifolds. There have been a lot of works related to this equation recently, see for example, Fu-Yau \cite{FY1, FY2} and Phong-Picard-Zhang \cite{PPZ1, PPZ, PPZ2}. Another important example is the {\it special Lagrangian equations} introduced by Harvey and Lawson \cite{HL}, which can be written as the alternative combinations of elementary symmetric functions,
\[
\sin\theta \sum_{k=0}^{[n/2]} (-1)^k \sigma_{2k}(D^2 u) + \cos \theta \sum_{k=0}^{[(n-1)/2]}(-1)^k\sigma_{2k+1}(D^2 u) =0.
\]

A complex analogue of this equation on compact K\"ahler manifolds also appeared naturally from the study of Mirror Symmetry, see Leung-Yau-Zaslow \cite{LYZ}, Collins-Jacob-Yau \cite{CJY} and Collins-Yau \cite{CY}. Moreover, equations of the form
\[
\sigma_1 (D^2 u) + b\, \sigma_n(D^2 u) = C
\]
for some constants $b\geq 0$ and $C>0$ also arise from the study of $J$-equation on toric varieties by Collins-Sz\'ekelyhidi \cite{CS}.

\medskip
\par
The above mentioned examples, Fu-Yau equation and special Lagrangian equation, are of a similar form as equation (\ref{equ-s}), but have their special structures which play essential role in the study of their solvability. It would be desirable to establish a general frame work for this type of equations. Indeed, equation of this general type was already considered by Krylov as an important example for the applications of the general notion of fully nonlinear elliptic equations developed in  \cite{Krylov95}. More precisely, Krylov considered the case with $\alpha(x)\leq 0$ and studied Dirichlet problem of the following degenerate equation in a (k-1)-convex domain $D$ in $\mathbb R^n$,
\begin{eqnarray}\label{krylov-equ}
 \sigma_k (D^2 u) = \sum_{l=0}^{k-1} \alpha_{l}(x)\,\sigma_{l}(D^2 u),
\end{eqnarray}
with all the coefficient $\alpha_{l}(x)\geq 0$ for $0\leq l\leq k-1$. In \cite{Krylov95}, Krylov observed that the above equation is elliptic in the admissible $\Gamma_k$ cone. By using certain concave structure of the elliptic operator, he reduced the above equation to the elliptic Bellman's equation and then applied the general theorems on the Bellman equations to obtain the crucial $C^{1, 1}$ a priori estimates provided that $\alpha_l(x)\geq 0$ and $\alpha_l^{1/(k-l+1)}(x) \in C^{1, 1}(D)$ for $0\leq l \leq k-1$.

\medskip

\par
Using the observation in the study of equation (\ref{equ-s}) on spheres, we can also study the corresponding Dirichlet problem over bounded domains in $\mathbb R^n$. In comparison with Krylov's equation, a key new feature is that {\it there is no sign requirement for the coefficient function of $\sigma_{k-1}$}. In fact, by viewing the above mentioned Fu-Yau equation and special Lagrangian equation, it is important to study the equations formed by linear combinations of elementary symmetric functions with possibly alternative signs. However, as we will see in Section 2, the structure and behavior of the equation will be quite different from Krylov's case with this new feature. In general, examples show that one can not expect the existence of solutions in $\Gamma_k$ cone as Krylov's case, which leads us to look for solutions in a larger cone. On the other hand, the admissible set should not be too large as we still need to keep the important ellipticity and concavity properties of the operator. The key observation given in Proposition \ref{cone-concavity} is that the proper admissible set is $\Gamma_{k-1}$ for the new equation. Based on this, we can prove the following $C^{1, 1}$ a priori estimate for the corresponding degenerate problem.

 \begin{theorem}
 Let $u\in C^4(\Omega) \cap C^2(\bar\Omega)$ be an admissible solution, that is $D^2 u \in \Gamma_{k-1}$, of the equation
 \begin{eqnarray}\label{equ-d}
 \sigma_k(D^2 u) + \alpha(x)\, \sigma_{k-1}(D^2 u) = \sum_{\ell=0}^{k-2} \alpha_\ell(x)\, \sigma_\ell(D^2 u)
 \end{eqnarray}
 over a bounded domain $\Omega \subset\mathbb R^n$ with smooth boundary $\partial \Omega$, and $u= \phi$ on $\partial\Omega$ for some given smooth function $\phi(x)$.
 Assume that $\alpha_\ell(x) \geq 0$ for $\ell=0, 1, \cdots, k-2$.
 Then
 \begin{eqnarray}\label{C11est}
  \sup_{\Omega} | D^2 u| \leq C \left( 1 + \sup_{\partial\Omega} |D^2 u|\right),
   \end{eqnarray}
where $C$ is a constant depending on $\|\alpha_{\ell}^{1/(k-\ell)}\|_{C^{1, 1}}$, $\|\alpha(x) \|_{C^{1, 1}}$, $\|u\|_{C^0}$ and $\Omega$, but independent of the lower bound $\inf_{\Omega} \alpha_{\ell}(x)$ with $\ell=0, \cdots, k-2$.
 \end{theorem}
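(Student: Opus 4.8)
The plan is to follow the classical interior second-order estimate strategy for Hessian-type equations (going back to Chou-Wang, Guan-Ma, and Krylov), adapted to the operator $F = \sigma_k + \alpha \sigma_{k-1} - \sum_{\ell=0}^{k-2}\alpha_\ell \sigma_\ell$ on the cone $\Gamma_{k-1}$. Since the estimate is supposed to be reducible to the boundary, I would introduce a test function of the form
\[
W(x,\xi) = \eta(x)\, u_{\xi\xi}(x) + \text{(lower-order corrections)},
\]
where $\xi$ ranges over unit vectors, $\eta$ is a cutoff supported near an interior point (or a function like $\eta = (1-|x-x_0|^2/R^2)^\beta$ for a suitable exponent $\beta$), and the lower-order terms (e.g. a multiple of $|Du|^2$ and of $u$ itself) are added to absorb gradient terms. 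One first observes, by Proposition \ref{cone-concavity}, that on $\Gamma_{k-1}$ the operator is elliptic and the relevant combination is concave in the Hessian entries, so the linearized operator $L = F^{ij}\partial_i\partial_j$ is uniformly elliptic along the solution and one has the concavity inequality $F^{ij,rs} \eta_{ij}\eta_{rs} \le 0$ at a maximum, or more precisely the standard Andrews-Gerhardt-type inequality relating $F^{ij,rs}$ to $(F^{ij})$ for concave $F$.

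The main computation is to differentiate the equation \eqref{equ-d} twice in a fixed direction $\xi$, evaluate at an interior maximum $x_0$ of $W(\cdot,\xi)$ over all $(x,\xi)$, diagonalize $D^2u$ there, and collect terms. At the maximum one has $DW = 0$ and $D^2 W \le 0$; feeding $D^2 W \le 0$ into $L W$ produces, after using the twice-differentiated equation, an inequality of the schematic form
\[
0 \;\ge\; L W \;\ge\; c\,\eta\, u_{\xi\xi}\!\!\sum_{i\neq \xi}\!F^{ii} \;-\; C\big(1 + \|\alpha\|_{C^{1,1}} + \sum_\ell \|\alpha_\ell^{1/(k-\ell)}\|_{C^{1,1}}^{?}\big)\big(1 + u_{\xi\xi}\big) \;+\; (\text{third-order terms}),
\]
where the third-order terms must be controlled by the concavity of $F$ together with the "good" negative terms coming from $D^2W\le 0$ (this is the Guan-Ma / Chou-Wang cancellation). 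The delicate point specific to this paper is the handling of the right-hand side derivatives $\partial^2 \alpha_\ell$: because one only assumes $\alpha_\ell^{1/(k-\ell)}\in C^{1,1}$ rather than $\alpha_\ell \in C^{1,1}$, the bound on $\partial^2\alpha_\ell$ degenerates like $\alpha_\ell^{1-2/(k-\ell)}$ near the zero set of $\alpha_\ell$, and one must pair this with the corresponding term $\sigma_\ell(D^2u)$ — exactly as in Krylov's treatment of \eqref{krylov-equ} — so that the product stays bounded independently of $\inf_\Omega \alpha_\ell$. This is the reason the constant in \eqref{C11est} is allowed to depend on $\|\alpha_\ell^{1/(k-\ell)}\|_{C^{1,1}}$ but not on $\inf_\Omega \alpha_\ell$; making this pairing work while $\alpha$ (the coefficient of $\sigma_{k-1}$) has no sign is, I expect, the main obstacle, and is precisely where Proposition \ref{cone-concavity} — guaranteeing that the natural "defining function" for the $\Gamma_{k-1}$ admissibility, and the associated concavity, survives the sign-indefinite $\alpha\sigma_{k-1}$ term — is indispensable.

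Concretely the steps are: (1) record from Proposition \ref{cone-concavity} the ellipticity of $L = F^{ij}\partial_{ij}$ on $\Gamma_{k-1}$ and the concavity/convexity structure of $F$, together with quantitative bounds $\sum_i F^{ii} \ge c_0 > 0$ and, at a diagonalized point with $\lambda_1 \ge \cdots \ge \lambda_n$, the lower bound $F^{11} \ge c_0$ on the smallest diagonal coefficient; (2) set up the auxiliary function $W$, locate its maximum $(x_0,\xi_0)$, and rotate so $D^2u(x_0)$ is diagonal with $u_{\xi_0\xi_0} = \lambda_1$ the largest eigenvalue; (3) differentiate \eqref{equ-d} once and twice in $\xi_0$, use $DW(x_0)=0$ to eliminate first-order terms and $D^2W(x_0)\le 0$ in $LW$; (4) invoke concavity of $F$ to dominate the third-order terms $F^{ij,rs}u_{ij\xi_0}u_{rs\xi_0}$ by the negative terms supplied by $D^2W\le 0$, following the Guan-Ma argument, noting that the inhomogeneity of the equation only affects lower-order terms; (5) control the right-hand side contributions $\partial^2(\alpha_\ell)\,\sigma_\ell(D^2u)$ by the Krylov pairing described above, so that the resulting inequality reads $c\,\eta\lambda_1^2 \cdot(\text{positive}) \le C(1+\lambda_1)$, giving $\eta\lambda_1 \le C$ at the maximum and hence $\sup_{B_{R/2}(x_0)} \lambda_1 \le C$; (6) since the interior point was arbitrary, combine with the boundary bound to conclude \eqref{C11est}, using $|D^2u| \le C(\lambda_1 + |\operatorname{tr} D^2u|) \le C(\lambda_1 + \sigma_1)$ and that $\sigma_1$ is itself controlled once $\lambda_1$ is.
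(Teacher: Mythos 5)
There is a genuine gap, and it comes from the overall strategy rather than from a fixable detail. Your plan is built around a cutoff-localized test function $W=\eta\, u_{\xi\xi}+\cdots$ and ends with ``$\eta\lambda_1\le C$ at the maximum, hence $\sup_{B_{R/2}}\lambda_1\le C$'', i.e.\ a \emph{pure interior} second-derivative bound independent of the boundary Hessian. That is strictly stronger than \eqref{C11est} and is not available for this class of equations: for $\sigma_k$-type equations with $k\ge 3$, interior $C^{1,1}$ estimates fail (Pogorelov-type counterexamples), which is exactly why the theorem is stated with the term $\sup_{\partial\Omega}|D^2u|$ on the right. Technically, the failure shows up at your step (4): once the cutoff $\eta$ is present, $LW$ contains terms like $F^{ij}\eta_i (u_{\xi\xi})_j$ and $u_{\xi\xi}F^{ij}\eta_{ij}$, and the third-order quantities they generate cannot be absorbed by concavity alone without extra structure (convexity, a Pogorelov factor $(-u)^\beta$, or special divergence structure); nothing in the hypotheses supplies this. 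The paper avoids localization entirely: it applies the maximum principle to the single global function $\omega=\Delta u+\tfrac{M}{2}|x|^2$, shows that at an interior maximum one would get $0\ge G^{ij}\omega_{ij}\ge \tfrac{n-k+1}{k}M-(\text{controlled terms})$, a contradiction for $M$ large, and concludes that the maximum of $\omega$ lies on $\partial\Omega$, which is precisely the form \eqref{C11est}.

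A second, independent problem is which operator you linearize and declare concave. You work with $F=\sigma_k+\alpha\,\sigma_{k-1}-\sum_\ell\alpha_\ell\sigma_\ell$ and invoke ``$F^{ij,rs}\le 0$'' and lower bounds such as $F^{11}\ge c_0$; but on $\Gamma_{k-1}$ the unnormalized $F$ is neither elliptic nor concave in general (this is the whole point of Proposition \ref{cone-concavity}), and no uniform lower bound on individual diagonal entries of the linearization holds in the degenerate setting. The paper's computation is carried out for the quotient operator $G=\sigma_k/\sigma_{k-1}-\sum_\ell\alpha_\ell\,\sigma_\ell/\sigma_{k-1}$, using concavity of $\sigma_k/\sigma_{k-1}$ on $\Gamma_{k-1}$ and the sharpened concavity inequality for $G_\ell=-\sigma_\ell/\sigma_{k-1}$ (obtained from concavity of $(\sigma_{k-1}/\sigma_\ell)^{1/(k-1-\ell)}$) to complete a square and convert the first-derivative terms $(\alpha_\ell)_p G_\ell^{ij}u_{ijp}$ into $-|\nabla\alpha_\ell|^2/\alpha_\ell$ contributions; the only trace bound used is $\sum_i G^{ii}\ge \tfrac{n-k+1}{k}+\sum_\ell C(n,k,\ell)\,\alpha_\ell\,\sigma_\ell\sigma_{k-2}/\sigma_{k-1}^2$. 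Finally, your ``Krylov pairing'' is the right instinct for why only $\|\alpha_\ell^{1/(k-\ell)}\|_{C^{1,1}}$ enters, but the actual absorption mechanism is different: writing $\alpha_\ell=g_\ell^{p_\ell}$ with $p_\ell\ge k-\ell$ one bounds the bad term by $C g_\ell^{p_\ell-1}\sigma_\ell/\sigma_{k-1}$, and then splits the indices according to whether this quantity is small (absorbed by $\tfrac{n-k+1}{k}M$) or large, in which case Newton--Maclaurin plus the positive term $M\alpha_\ell\sigma_\ell\sigma_{k-2}/\sigma_{k-1}^2$ from $M\sum_i G^{ii}$ dominates it precisely because $\tfrac{1-p_\ell}{k-1-\ell}\le -1$. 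Without identifying this good term (which your cutoff scheme does not produce), the degenerate coefficients cannot be handled independently of $\inf_\Omega\alpha_\ell$.
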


\medskip

There is an interesting PDE problem in finding the optimal power $p_{\ell}$ in the requirement of $\alpha_\ell^{1/ p_\ell}\in C^{1, 1}$ to obtain the above $C^{1, 1}$ estimate for the degenerate fully nonlinear equations. For Krylov's equation (\ref{krylov-equ}), $C^{1, 1}$ estimates were proved with the assumption that $p_l\geq k-l +1$ for $0\leq l \leq k-1$, and this was weakened to be $p_l \geq k-l$ by Dong \cite{Dong} and hence partially confirmed a question proposed by Krylov in \cite{Krylov95}. Here, in the above theorem, no assumption is required on the coefficient of $\sigma_{k-1}$, except $\alpha(x) \in C^{1, 1}$. In particular, we do not require it to be non-negative. For $\alpha_{\ell}$ with $0\leq \ell \leq k-2$, we assume that $\alpha_\ell^{1/ p_\ell}\in C^{1, 1}$ with $p_\ell \geq k-\ell$. But it is still an interesting question whether the estimate still holds if one assumes $p_\ell \geq k-\ell-1$ for all $1\leq\ell \leq k-2$. In fact, one expects $p_\ell = k-\ell -1$ to be the optimal power to guarantee the $C^{1, 1}$ estimate. For the case of Monge-Amp\`ere equation, that is $k=n$ and $\alpha_\ell=0$ for $1\leq \ell\leq n-1$, this was proved by Guan-Trudinger-Wang \cite{GTW} and the power $n-1$ was also shown to be sharp by an example in \cite{Wang95}. For the Hessian equation, that is $\alpha_\ell=0$ for $1\leq \ell \leq k-1$ with $k<n$, estimate as (\ref{C11est}) was obtained in \cite{ITW} with power $k-1$ and a modified example shows that $k-1$ is the lowest possible power in \cite{DPZ}. An interesting question left open in \cite{ITW} is to establish the boundary $C^{1, 1}$ estimate for the solution of the Hessian equation under the optimal condition.

\medskip

\par
To obtain the global $C^{1, 1}$ estimate for equation (\ref{equ-d}), it is enough to derive the estimates on the boundary $\partial\Omega$. Here, we follow the idea by Guan \cite{GuanB1, GuanB2} and obtain the boundary estimate under the sub-solution assumption.
\begin{theorem}
Let $u\in C^4(\Omega) \cap C^2(\bar\Omega)$ be an admissible solution of equation (\ref{equ-d}) with $u= \phi$ on the boundary. Suppose $\alpha_\ell>0 \in C^{1, 1}(\bar\Omega)$ and $\alpha(x) \in C^{1, 1}(\bar \Omega)$.
If there exists an admissible sub-solution $\underline u \in C^2(\bar \Omega)$:
\begin{equation}\label{subsolution}
\begin{cases}
\sigma_k(D^2 \underline u) + \alpha(x) \,\sigma_{k-1}(D^2 \underline u) \geq \sum_{\ell=0}^{k-2} \alpha_\ell(x)\, \sigma_\ell(D^2 \underline u) & \text{ in } \Omega\\
\underline u = \phi & \text{ on } \partial \Omega.
\end{cases}
\end{equation}
Then, there exists a constant $C$ depending on $\|u\|_{C^1(\bar\Omega)}$, $\|\underline u\|_{C^2(\bar\Omega)}$, $\|\alpha\|_{C^{1, 1}(\bar\Omega)}$ and $\|\alpha_\ell\|_{C^{1, 1}(\bar\Omega)}$ such that
\[
\max_{\partial\Omega} |D^2 u|\leq C
\]
\end{theorem}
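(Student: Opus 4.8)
The plan is to follow the strategy of Guan \cite{GuanB1, GuanB2}, going back to the work of Caffarelli--Nirenberg--Spruck: fix an arbitrary point $x_0\in\partial\Omega$, take it as the origin with inner unit normal $e_n$, and reduce $\max_{\partial\Omega}|D^2u|$ to separate bounds for the pure tangential, mixed tangential--normal, and pure normal second derivatives of $u$ at $x_0$. Write $G(r,x)=\sigma_k(r)+\alpha(x)\sigma_{k-1}(r)-\sum_{\ell=0}^{k-2}\alpha_\ell(x)\sigma_\ell(r)$ and let $F^{ij}=\partial G/\partial r_{ij}$ be evaluated at $(D^2u(x),x)$. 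By Proposition \ref{cone-concavity}, $(F^{ij})$ is positive definite and $G(\cdot,x)$ is concave on $\Gamma_{k-1}$; combined with the subsolution inequality, with $\underline u=u$ on $\partial\Omega$, and with the equation $G(D^2u,x)=0$, concavity yields $F^{ij}(\underline u-u)_{ij}\ge G(D^2\underline u,x)-G(D^2u,x)=G(D^2\underline u,x)\ge 0$ in $\Omega$, hence $u\ge\underline u$ in $\Omega$ by the maximum principle. The pure tangential bound is then immediate: differentiating $u-\underline u\equiv 0$ along $\partial\Omega$ twice in tangential directions and using the smoothness of $\partial\Omega$, the $C^1$ bound on $u$ and the $C^2$ bound on $\underline u$ shows that the pure tangential second derivatives of $u$ at $x_0$ are bounded by $C$.

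For the mixed estimate, I would introduce near $x_0$, for a fixed tangential index $\ell<n$, the first order operator $T=\partial_\ell+\sum_{\beta<n}\kappa_{\ell\beta}\,(x_\beta\partial_n-x_n\partial_\beta)$, where $(\kappa_{\beta\gamma})$ is the second fundamental form of $\partial\Omega$ at $x_0$; this combination of a translation and a rotation is tangent to $\partial\Omega$ to second order at $x_0$, so that $|T(u-\underline u)|\le C|x|^2$ on $\partial\Omega\cap B_\delta$ and $|T(u-\underline u)|\le C_\delta$ on $\Omega\cap\partial B_\delta$. Differentiating the equation once by $T$ and using that the $\sigma_m$ transform simply under translations and rotations of the Hessian, the $C^{1,1}$ bounds on $\alpha$ and $\alpha_\ell$, the $C^2$ bound on $\underline u$, and the Newton--Maclaurin inequalities on $\Gamma_{k-1}$, a computation as in \cite{GuanB1} gives $|F^{ij}(T(u-\underline u))_{ij}|\le C\big(1+\textstyle\sum_i F^{ii}\big)$. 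One then builds a barrier $\Psi=A_1 v+A_2|x|^2$ on $\Omega_\delta=\Omega\cap B_\delta$, where $v$ is assembled from $u-\underline u$ (which satisfies $F^{ij}v_{ij}\le 0$ and vanishes on $\partial\Omega$), the distance function $d(\cdot,\partial\Omega)$, and a large multiple of $|x|^2$; after the Caffarelli--Nirenberg--Spruck dichotomy on the size of the eigenvalues of $D^2u$ — which is what keeps the construction effective in the degenerate case — one arranges $F^{ij}v_{ij}\le-\theta\big(1+\textstyle\sum_i F^{ii}\big)$ and $v\ge 0$ in $\Omega_\delta$. Choosing $A_1\gg A_2\gg 1$ makes $F^{ij}\big(\Psi\pm T(u-\underline u)\big)_{ij}\le 0$ in $\Omega_\delta$ and $\Psi\pm T(u-\underline u)\ge 0$ on $\partial\Omega_\delta$, so the maximum principle and evaluation of the inner normal derivative at $x_0$ (where $\Psi$ vanishes to first order) give $|\partial_n T(u-\underline u)(x_0)|\le C$, i.e. $|u_{\ell n}(x_0)|\le C$.

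For the double normal derivative I would argue again as in Caffarelli--Nirenberg--Spruck. In an orthonormal frame at $x_0$ diagonalising the tangential block of $D^2u(x_0)$, the previous two steps give that this tangential block, call its eigenvalues $\lambda'$, and the mixed entries are all bounded, while $\sigma_1(D^2u(x_0))>0$ forces $u_{nn}(x_0)\ge -C$. Since $G(\cdot,x_0)$ is affine in the $(n,n)$ entry, the equation at $x_0$ reads $u_{nn}(x_0)\cdot F^{nn}+R=0$, where $F^{nn}=\sigma_{k-1}(\lambda')+\alpha\sigma_{k-2}(\lambda')-\sum_{\ell=1}^{k-2}\alpha_\ell\sigma_{\ell-1}(\lambda')\ge 0$ depends only on the bounded tangential block and $|R|\le C$; the admissible subsolution, whose Hessian at $x_0$ is bounded and whose tangential block differs from $\lambda'$ by a controlled amount, is then used exactly as in \cite{GuanB1, GuanB2} to bound $F^{nn}$ from below by a positive constant, whence $u_{nn}(x_0)\le C$. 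Combining the three estimates and recalling that $x_0\in\partial\Omega$ was arbitrary yields $\max_{\partial\Omega}|D^2u|\le C$. The hard part is the mixed tangential--normal estimate: one must manufacture a barrier that is simultaneously large enough near $\partial\Omega$ to dominate $T(u-\underline u)$ and whose linearised operator is bounded above by $-\theta(1+\sum_i F^{ii})$, and because the equation is genuinely degenerate (no positive lower bound on $\inf_\Omega\alpha_\ell$) this forces the delicate eigenvalue dichotomy rather than a straightforward uniformly elliptic barrier; the absence of any sign condition on $\alpha$ means moreover that the ellipticity and concavity driving the whole argument must be taken from Proposition \ref{cone-concavity} on $\Gamma_{k-1}$ rather than from the classical $\Gamma_k$ theory.
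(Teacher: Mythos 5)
Your overall strategy (tangential/mixed/double-normal split, Guan's subsolution barriers) is the same as the paper's, and the tangential bound is fine, but there are two genuine gaps. First, a structural one: you set $G(r,x)=\sigma_k(r)+\alpha(x)\sigma_{k-1}(r)-\sum_\ell\alpha_\ell(x)\sigma_\ell(r)$ and invoke Proposition \ref{cone-concavity} for positivity of $(F^{ij})$ and concavity on $\Gamma_{k-1}$; but that proposition concerns the quotient operator $\sigma_k/\sigma_{k-1}-\sum_\ell\alpha_\ell\,\sigma_\ell/\sigma_{k-1}$, and the polynomial form is in general neither elliptic nor concave on $\Gamma_{k-1}$ (e.g.\ $\partial\bigl(\sigma_k+\alpha\,\sigma_{k-1}\bigr)/\partial\lambda_i=\sigma_{k-1}(\lambda|i)+\alpha\,\sigma_{k-2}(\lambda|i)$ can be negative, since $\alpha$ has no sign and $\sigma_{k-1}(\lambda|i)$ need not be positive for $\lambda\in\Gamma_{k-1}$). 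All of your later steps must therefore be run on $G_k+\sum_\ell\alpha_\ell G_\ell=-\alpha$ as in \eqref{equ-G-form}; this is fixable, but it changes the computations below in an essential way.

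Second, and more seriously, your mixed tangential--normal step misses exactly the new difficulty of this theorem. Differentiating the (quotient-form) equation by $T_\gamma$ produces the coefficient terms $\sum_\ell |T_\gamma(\alpha_\ell)|\,\sigma_\ell/\sigma_{k-1}$, and these are \emph{not} controlled by $C\bigl(1+\sum_i F^{ii}\bigr)$ as you assert; the paper can only bound them by $C\bigl(1+\sum_i G^{ii}|\lambda_i|\bigr)$, using the strict positivity $\alpha_\ell>0$ together with the equation itself (so that $\sum_i G^{ii}|\lambda_i|\ge-\alpha+\delta\sum_\ell\sigma_\ell/\sigma_{k-1}$), see \eqref{Talpha}--\eqref{Tgamma}. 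Because of this extra term $\sum_i G^{ii}|\lambda_i|$, the barrier cannot be just $A_1v+A_2|x|^2$: one needs Guan's full barrier \eqref{Psi} with the additional piece $-A_3\sum_{\beta<n}|T_\beta(u-\phi)|^2$, whose second-order contribution supplies the compensating good term; with your barrier the comparison $G^{ij}(\Psi\pm T_\gamma(u-\phi))_{ij}\le 0$ simply does not follow. A related gap appears in the double-normal step: the positivity of the ``coefficient of $u_{nn}$'' is the whole content of the Trudinger/Guan argument, not a starting point, and with variable coefficients the auxiliary function $\Phi$ must be modified by the terms $(\alpha(x)-\alpha(x_0))/\eta$ and $K|x-x_0|/\eta$, while the uniform conclusion over $\partial\Omega$ additionally requires showing that $\lambda(D^2u)$ stays in a compact subset of $\Gamma_{k-1}$ --- which is again where $\alpha_\ell>0$ enters, via $\sigma_0/\sigma_{k-1}\to+\infty$ as $\lambda\to\partial\Gamma_{k-1}$. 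As written, your proposal defers these points to \cite{GuanB1, GuanB2}, but they are precisely where the present equation differs from the constant-coefficient setting treated there.
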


\medskip

The paper is organized as follows. In \S 2, we provide some background and discuss the ellipticity and concavity of equation (\ref{equ-s}). In \S 3, we prove the a priori estimates for the equation on $\mathbb S^n$ and show the existence of the solution by using degree method. In the last part of \S 3, we will also derive the $C^{1, 1}$ estimates on $\mathbb S^n$ for the degenerate case. In \S 4, we study the Dirichlet problem of equation (\ref{equ-d}) by deriving the interior and boundary $C^{1, 1}$ estimates.

\

{\bf Acknowledgements}. It is our pleasure to dedicate this work to Professor Duong H. Phong on the occasion of his 65th birthday, in honor of his influence in Mathematics. We wish to thank the referees for useful comments and suggestions.

 \section{Ellipticity and concavity of the equation}

 \par
 In this section, we study the ellipticity and concavity of equation
 \begin{eqnarray}\label{equationD}
 \sigma_k(D^2 u) + \alpha(x) \,\sigma_{k-1}(D^2 u) = \sum_{\ell=0}^{k-2} \alpha_{\ell}(x)\, \sigma_{\ell}(D^2 u)
 \end{eqnarray}
 where $\alpha_{\ell}(x)\in C^{\infty}(\Omega)$, $0\leq \ell\leq k-2$, are non-negative functions with $\sum_{\ell=0}^{k-2} \alpha_{\ell}(x)>0$.
 Those two properties play fundamental role when we solve the equation by the so-called method of continuity, according to the general PDE theories.

\medskip

 \par
In the landmark work \cite{CNSI, CNSIII}, Caffarelli-Nirenberg-Spruck investigated the Hessian equations systematically,
 \begin{eqnarray}\label{Hessian equation}
 \sigma_k (D^2 u) = f(x), \ \textit{ for } x\in \Omega\subset \mathbb R^n.
 \end{eqnarray}
$u$ is the {\it admissible solution} of equation (\ref{Hessian equation}) if $\lambda(D^2 u) \in \Gamma_k$ cone with
\begin{eqnarray*}\label{Gammak}
\Gamma_k = \{ \ \lambda\in \mathbb R^n \ | \ \sigma_1(\lambda)>0, \cdots, \sigma_{k}(\lambda)>0 \}.
\end{eqnarray*}
A class of more general Hessian type equations was considered by Krylov \cite{Krylov95}. In particular, he observed that the natural admissible cone to make equation
 \begin{align}\label{Krylov equation}
 \sigma_k (D^2 u) = \sum_{l=0}^{k-1} \alpha_{l}(x) \sigma_{l} (D^2 u), \ \textit{\rm with } \alpha_{l}(x) >0, \,  l=0, 1, \cdots, k-1
 \end{align}
 elliptic is also the $\Gamma_k$-cone, which is the same as the Hessian equation case.

 \medskip

\par
Comparing equation \eqref{equationD} with \eqref{Hessian equation} or \eqref{Krylov equation}, it is plausible to guess that the {\it admissible cone} should also be $\Gamma_k$. However, the following simple example indicates that, in general, this is not necessarily true.
 \par
 \begin{example}
{\rm Consider equation
 \begin{eqnarray}\label{11}
 \sigma_2(D^2u) +\sigma_1(D^2 u) = f(x), \ x\in \Omega \subset\mathbb R^n,
 \end{eqnarray}
 with $f(x) \geq 0$ smooth. Suppose $u$ is an admissible solution of the above equation. Let $ v = u + \frac{1}{4}|x|^2$, then
 $D^2v= D^2 u+ \frac{1}{2} I_2$.
  Therefore, $u$ is a solution of equation (\ref{11}) if and only if $v$ solves the following equation
 \begin{eqnarray}\label{12}
 \sigma_2 (D^2 v) = f(x) + \frac{1}{4}\sigma_2(I_2):=g(x)>0.
 \end{eqnarray}
According to the solvability of standard Hessian equation in \cite{CNSIII}, the admissible cone for
equation (\ref{12}) is $\Gamma_2$. This indicates that one should seek the solution for equation (\ref{11}) in a convex set containing $\Gamma:=\{\lambda\in \mathbb R^n \, |\,  \lambda+ {1\over 2} I_n \in \Gamma_2\}$. On the other hand, it is easy to check that $\Gamma_2$-cone is properly contained in the set $\Gamma$. Therefore, we should looking for admissible solutions of equation (\ref{11}) in a convex set which is larger than $\Gamma_2$.
\par
In fact, one can easily verify that $u(x_1, x_2) = -{1\over 8}x_1^2 +{1\over 2} x_2^2$ solves equation (\ref{11}) with $f(x) \equiv {1\over 2}$ in $\mathbb R^2$. We compute that $\sigma_1(D^2u) = {3\over 4}>0$, but $\sigma_2(D^2 u) = -{1\over 4} <0$.

\medskip

There are also similar examples on spheres. It's known (\cite{Alex1}) that there exist plenty of functions $u\in C^4(\mathbb S^2)$ such that  $\sigma_1(W_u) = \Delta u+ 2 u >0$ on $\mathbb S^2$ while $\sigma_2 (W_u)(x_0)<0$ for some point $x_0\in \mathbb S^2$. Therefore, $u$ satisfies equation
\begin{eqnarray}
 \sigma_2(W_u) + A \, \sigma_1(W_u) = f(x) \ \ {\rm  on }\ \ \mathbb S^2.
\end{eqnarray}
for some $f(x)>0$ and large constant $A>0$. It's easy to show that such solutions are unique up to linear combination of coordinate functions $x_1, x_{2}, x_3$. }

\end{example}
\

This simple example suggests that we should look for admissible solution for equation (\ref{equationD}) in a set larger than $\Gamma_k$ cone. However, it should not be too large as we still need to keep the important ellipticity and concavity properties of the operator. The following proposition says that the proper admissible set is $\Gamma_{k-1}$ for the new equation.

\medskip

 \begin{proposition}\label{cone-concavity}
If $u$ is a smooth function with $\lambda(D^2 u) \in \Gamma_{k-1}$, then the operator
 \begin{eqnarray}\label{operatorG}
  G(\lambda(D^2u)) := \frac{\sigma_k(\lambda(D^2u))}{\sigma_{k-1}(\lambda(D^2u))} - \sum_{\ell=0}^{k-2} \alpha_{\ell}(x) \frac{\sigma_{\ell}(\lambda(D^2u))}{\sigma_{k-1}(\lambda(D^2u))}
 \end{eqnarray}
 is elliptic and concave.
 \end{proposition}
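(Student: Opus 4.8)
The plan is to reduce the statement to term-by-term properties of the quotients $\sigma_m/\sigma_{k-1}$. Since $\sigma_{k-1}>0$ on $\Gamma_{k-1}$, the operator $G$ is smooth there, and I would write
\[
G(\lambda)\;=\;F_k(\lambda)-\sum_{\ell=0}^{k-2}\alpha_\ell(x)\,F_\ell(\lambda),\qquad F_m:=\frac{\sigma_m}{\sigma_{k-1}} ,
\]
and handle each piece separately. Recall the standard dictionary: a symmetric function $\Phi$ of the eigenvalues defines an elliptic operator in the entries of $D^2u$ exactly when every $\partial\Phi/\partial\lambda_i\geq 0$ (strictly, for strict ellipticity), and a concave operator exactly when $\Phi$ is concave on the (convex) set of admissible eigenvalue vectors. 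As the $F_m$ are symmetric, everything reduces to proving, on $\Gamma_{k-1}$: (i) $F_k$ satisfies $\partial_i F_k\geq 0$ and is concave; (ii) each $F_\ell$ with $0\leq\ell\leq k-2$ satisfies $\partial_i F_\ell\leq 0$ and is convex. Granting (i)--(ii), $\alpha_\ell\geq 0$ gives $\partial_i G=\partial_i F_k-\sum_\ell\alpha_\ell\,\partial_i F_\ell\geq 0$ and $D^2 G=D^2 F_k-\sum_\ell\alpha_\ell\,D^2 F_\ell\leq 0$, which is the proposition. (This also explains why $\alpha(x)$, the coefficient of $\sigma_{k-1}$ in \eqref{equationD}, never enters $G$ and needs no sign condition: dividing the equation by $\sigma_{k-1}$ turns it into $G(\lambda(D^2u))=-\alpha(x)$, and the sign of a right-hand side is irrelevant to ellipticity and concavity.)

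For (i), writing $\partial_i\sigma_m=\sigma_{m-1}(\lambda|i)$ (with $\lambda|i$ the vector $\lambda$ with $\lambda_i$ deleted) gives
\[
\partial_i F_k=\frac{\sigma_{k-1}(\lambda|i)^2-\sigma_k(\lambda|i)\,\sigma_{k-2}(\lambda|i)}{\sigma_{k-1}(\lambda)^2}.
\]
Since $\lambda\in\Gamma_{k-1}$ forces $\lambda|i\in\Gamma_{k-2}$ (in $n-1$ variables), so $\sigma_{k-2}(\lambda|i)>0$, the numerator is $\geq\sigma_{k-1}(\lambda|i)^2\geq 0$ when $\sigma_k(\lambda|i)\leq 0$, and is $\geq (c-1)\,\sigma_k(\lambda|i)\,\sigma_{k-2}(\lambda|i)>0$ when $\sigma_k(\lambda|i)>0$, where $c>1$ is the constant in Newton's inequality $\sigma_{k-1}^2\geq c\,\sigma_{k-2}\,\sigma_k$; hence $\partial_i F_k\geq 0$, with equality only where $\sigma_{k-1}(\lambda|i)=\sigma_k(\lambda|i)=0$. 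For the concavity of $F_k=\sigma_k/\sigma_{k-1}$ on all of $\Gamma_{k-1}$ I would use the Newton--Maclaurin inequalities together with the structural fact that $(n-k+1)\sigma_{k-1}=\sum_i\partial_i\sigma_k$ is the $(1,\dots,1)$-directional derivative of the G{\aa}rding-hyperbolic polynomial $\sigma_k$, so that $\Gamma_{k-1}$ is precisely the enlarged cone on which $F_k$ still behaves like a (normalized) harmonic mean of the generalized eigenvalues of $D^2u$; this is the one ingredient I would be content to quote as a known lemma.

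For (ii), $\Gamma_{k-1}\subseteq\Gamma_\ell$ makes $\sigma_\ell$ and $\sigma_{k-1}$ both positive on $\Gamma_{k-1}$. By the generalized Newton--Maclaurin inequality, $(\sigma_{k-1}/\sigma_\ell)^{1/(k-1-\ell)}$ is positive and concave on $\Gamma_{k-1}$, so its reciprocal $(\sigma_\ell/\sigma_{k-1})^{1/(k-1-\ell)}$ is positive and convex there; since $k-1-\ell\geq 1$, precomposing with the increasing convex map $t\mapsto t^{\,k-1-\ell}$ shows $F_\ell=\sigma_\ell/\sigma_{k-1}$ is convex on $\Gamma_{k-1}$. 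Monotonicity is again a direct computation: $\partial_i F_\ell=\bigl(\sigma_{\ell-1}(\lambda|i)\sigma_{k-1}(\lambda|i)-\sigma_\ell(\lambda|i)\sigma_{k-2}(\lambda|i)\bigr)/\sigma_{k-1}(\lambda)^2$, and the balanced Newton--Maclaurin inequality $\sigma_\ell\,\sigma_{k-2}\geq\sigma_{\ell-1}\,\sigma_{k-1}$ for the $n-1$ numbers $\lambda|i\in\Gamma_{k-2}$ (immediate from the sign data on $\Gamma_{k-2}$ when $\sigma_{k-1}(\lambda|i)\leq 0$, and the Maclaurin ratio chain when $\sigma_{k-1}(\lambda|i)>0$, i.e.\ $\lambda|i\in\Gamma_{k-1}$) makes the numerator $\leq 0$. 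This proves (i)--(ii) and hence the proposition; it also gives strict ellipticity under the standing assumption $\sum_{\ell=0}^{k-2}\alpha_\ell>0$, since on the thin locus where $\partial_i F_k=0$, i.e.\ $\sigma_{k-1}(\lambda|i)=\sigma_k(\lambda|i)=0$, one has $\partial_i F_\ell=-\sigma_\ell(\lambda|i)\sigma_{k-2}(\lambda|i)/\sigma_{k-1}(\lambda)^2<0$ (as $\sigma_\ell(\lambda|i),\sigma_{k-2}(\lambda|i)>0$ for $\ell\leq k-2$), so $\partial_i G>0$.

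The step I expect to be the real obstacle is the concavity of $F_k=\sigma_k/\sigma_{k-1}$ on the full cone $\Gamma_{k-1}$ rather than on $\Gamma_k$: on $\Gamma_{k-1}\setminus\overline{\Gamma_k}$ the numerator $\sigma_k$ can be negative (as the example in this section shows), so the familiar concavity of $\sigma_k^{1/k}$, or of $\sigma_k/\sigma_{k-1}$ on $\Gamma_k$, does not transfer verbatim, and one really has to exploit that $\Gamma_{k-1}$ is the G{\aa}rding cone of $\sigma_{k-1}$, with careful sign-tracking in the Newton--Maclaurin inequalities. Everything else is routine manipulation of elementary-symmetric-function identities and the standard passage between functions of eigenvalues and functions of symmetric matrices.
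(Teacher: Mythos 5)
Your proposal is correct and follows essentially the same route as the paper: the same term-by-term decomposition, the same $\sigma_m(\lambda)=\sigma_m(\lambda|i)+\lambda_i\sigma_{m-1}(\lambda|i)$ reduction with Newton--Maclaurin for ellipticity, and the same concavity scheme, quoting the concavity of $\sigma_k/\sigma_{k-1}$ on $\Gamma_{k-1}$ (the paper cites Huisken--Sinestrari) and deducing convexity of $\sigma_\ell/\sigma_{k-1}$ from the known concavity of $(\sigma_{k-1}/\sigma_\ell)^{1/(k-1-\ell)}$ due to Krylov. Your composition argument for that last step (reciprocal of a positive concave function, then an increasing convex power) replaces the paper's direct Hessian computation, and your strict-ellipticity remark under $\sum_\ell\alpha_\ell>0$ is a harmless extra, but these are only presentational differences.
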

 \begin{proof}
 To check the ellipticity, it suffices to show that
 \begin{eqnarray*}
 G^{ii} = \frac{\partial G(\lambda)}{\partial\lambda_i} >0, \ \text{ for } \lambda \in \Gamma_{k-1}.
 \end{eqnarray*}
 By direct computation, it follows that
 \begin{eqnarray*}
 G^{ii} &=& \left(\frac{\sigma_k }{\sigma_{k-1}}\right)^{ii} - \sum_{\ell=0}^{k-2} \alpha_{\ell}(x) \left(\frac{\sigma_{\ell} }{\sigma_{k-1}}\right)^{ii}\\
 &=& \frac{ \sigma_{k-1}(\lambda | i) \sigma_{k-1} - \sigma_k \sigma_{k-2}(\lambda | i)}{\sigma_{k-1}^2} -  \sum_{\ell=0}^{k-2} \alpha_{\ell}(x) \frac{ \sigma_{\ell-1}(\lambda | i) \sigma_{k-1} - \sigma_{\ell} \sigma_{k-2}(\lambda | i)}{\sigma_{k-1}^2}.
 \end{eqnarray*}
 Note that, for any $i, \ell = 1, \cdots, n$, we have $\sigma_\ell(\lambda) = \sigma_\ell (\lambda | i )+ \lambda_i \sigma_{\ell-1} (\lambda | i) $. Thus,
 \begin{eqnarray*}
  \sigma_{k-1}(\lambda | i) \sigma_{k-1} - \sigma_k \sigma_{k-2}(\lambda | i)= \sigma_{k-1}^2(\lambda | i) - \sigma_{k}(\lambda | i) \sigma_{k-2}(\lambda | i) \geq\frac{n}{k(n-k+1)}\sigma_{k-1}^2(\lambda | i)
 \end{eqnarray*}
 by Newton-Maclaurin inequality. For more detailed computation, please see \cite{Guan}. Similarly, we can also obtain that
 \[
 \sigma_{\ell-1}(\lambda | i) \,\sigma_{k-1}(\lambda) - \sigma_{\ell}(\lambda)\, \sigma_{k-2}(\lambda | i) < 0.
 \]
This implies that $G^{ii} \geq 0$ if $\alpha_{\ell}(x) \geq 0$.

\medskip

\par
The concavity of operator $G(D^2 u)$ follows from the concavity of operator $\frac{\sigma_k}{\sigma_{k-1}}$ and convexity of operator $\frac{\sigma_{\ell}}{\sigma_{k-1}}$ for any $\ell =0, 1, \cdots, k-2$.
\par
In fact, it is known that $\left( {\sigma_q\over \sigma_p}\right)^{1/(q-p)}$ is concave in the $\Gamma_q$ cone for any $0\leq p < q \leq n$, see for example, the last section in \cite{Krylov95}. 
To see the convexity of ${\sigma_\ell\over \sigma_{k-1}}$ for $0\leq \ell \leq k-2$, we simply use the concavity of $F:= \left( {\sigma_{k-1}\over \sigma_\ell}\right)^{1/(k-1-\ell)}$. We can write
$ {\sigma_\ell \over \sigma_{k-1}} = F^{-(k-1-\ell)}$,
and compute
\[
 \left({\sigma_\ell \over \sigma_{k-1}}\right)^{ij, pq} = (k-\ell)\, (k-\ell -1) \, F^{-(k-\ell +1)} \, F^{ij} \, F^{pq} - (k-\ell -1) \, F^{-(k-\ell)} \, F^{ij, pq}
 \]
The second term is negative because of the concavity of $F$. On the other hand, it is a well-known fact proved by Huisken-Sinestrari \cite{HS} (see also \cite{LRW}) that ${\sigma_k\over \sigma_{k-1}}$ is concave in the $\Gamma_{k-1}$ cone. Therefore, it follows that the operator $G$ is concave in $\Gamma_{k-1}$.

 \end{proof}

\

For the convenience of notations, we will denote
\begin{eqnarray}\label{operatorG}
G_k(\lambda) ={\sigma_k\over \sigma_{k-1}}(\lambda), \ \  \ G_{\ell}(\lambda) =-{\sigma_\ell\over \sigma_{k-1}}(\lambda) \textit{ \rm for } 0\leq \ell \leq k-2
\end{eqnarray}
Under this notation, for $u$ in the $\Gamma_{k-1}$ admissible cone, equation (\ref{equ-s}) is equivalent to
\begin{eqnarray}\label{G-op}
G(W_u(x)) := G_k(W_u(x))  + \sum_{\ell=0}^{k-2} \, \alpha_{\ell}(x) \, G_{\ell}(W_u(x)) = -\alpha(x).
\end{eqnarray}
 Moreover, it follows from the above proposition, we know the operator $G$ is elliptic and concave in $\Gamma_{k-1}$ cone. We will make essential use of these properties in the following sections.

\medskip

We remark that the ellipticity and concavity of the operator proved in Proposition \ref{cone-concavity} purely rely on the algebraic properties of the elementary symmetric functions. Therefore, Proposition \ref{cone-concavity} still holds if we replace $D^2 u$ by the spherical hessian matrix $W_u = u_{ij} + u \, \delta_{ij}$.

 \

 \section{Existence of admissible solutions on spheres}

 \par
 In this section, we will first derive the a priori estimates for the admissible solution of equation (\ref{equ-s}) on $\mathbb S^n$, that is
 \begin{eqnarray}\label{equ-s1}
\sigma_k(W_u(x))+ \alpha(x)\sigma_{k-1}(W_u(x)) =\sum_{\ell=0} ^{k-2} \alpha_{\ell}(x) \, \sigma_{\ell}(W_u(x)), \  \ x\in \mathbb S^n
\end{eqnarray}
and then prove the existence of the solution via the degree method.
\medskip
\par We note that for any solution $u(x)$ of (\ref{equ-s1}), $u(x)+ l(x)$ is also a solution of the equation for any linear function $l(x) = \sum_{i=1}^{n+1} a_i \, x_i$. So, we will confine ourselves to solutions satisfying the following orthogonal condition
\begin{eqnarray}\label{orth}
\int_{\mathbb S^n} x_i \, u \, dx =0 \ \ \textit{ for } \forall \, i=1, \cdots, n+1.
\end{eqnarray}

\

 \subsection{The a priori estimates}$\\$

 \par
 In this sub-section, we proceed to derive {\it a priori} estimates in the non-degenerate case. We assume $u\in C^4(\mathbb S^n)$ is an admissible solution of equation (\ref{equ-s1}), i.e.
 \[
 W_u = u_{ij} + u\, \delta_{ij}\in \Gamma_{k-1}.
 \]

\medskip

\subsubsection{$C^2$ estimate} $\\$

We will derive the a priori $C^2$ estimate first, which are independent of the group invariance assumption.

\begin{proposition}\label{c2-non-deg}
Let $u\in C^4(S^n)$ be an admissible solution of equation (\ref{equ-s1}) with $\alpha(x)\in C^{1, 1}(S^n)$, and assume that $\alpha_\ell(x)>0\in C^{1, 1}(S^n)$.
Then, there exists a positive constant $C$ depending on $n$, $\|\alpha(x)\|_{C^{1, 1}(S^n)}$, $\|\alpha_\ell(x)\|_{C^{1,1}(S^n)}$ and $\inf_{S^n} \alpha_\ell(x)$ such that
\begin{eqnarray}\label{c2-est}
0< \lambda_{\max}(x) \leq  C \ \textit{ on } \mathbb S^n
\end{eqnarray}
where $\lambda_{\max}(x)$ is the largest eigenvalue of the matrix $W_u(x) = u_{ij} + u\, \delta_{ij}$. In particular, for any eigenvalue $\lambda_i(x)$ of $W_u(x) = u_{ij} + u\, \delta_{ij}$, we have
\begin{eqnarray}\label{c2-other}
\left|\lambda_i(x)\right| \leq (n-1) \, C \ \textit{ on } \mathbb S^n.
\end{eqnarray}
\end{proposition}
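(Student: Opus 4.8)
The plan is to reduce the statement to an upper bound on the largest eigenvalue $\lambda_{\max}$ of $W_u$ and then to obtain that bound by a maximum principle argument. First, \eqref{c2-other} follows formally from \eqref{c2-est}: since $W_u\in\Gamma_{k-1}\subset\Gamma_1$ we have $\sigma_1(W_u)=\sum_i\lambda_i>0$, so $\lambda_{\max}\ge\tfrac1n\sigma_1(W_u)>0$; and if $\lambda_i<0$ then $-\lambda_i<\sum_{j\neq i}\lambda_j\le(n-1)\lambda_{\max}$, whence $|\lambda_i|\le(n-1)\lambda_{\max}\le(n-1)C$. So it suffices to prove \eqref{c2-est}.

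For \eqref{c2-est}, I would let $\lambda_{\max}$ attain its maximum over $\mathbb S^n$ at a point $x_0$, fix a local orthonormal frame in which $W_u(x_0)$ is diagonal with $W_{11}(x_0)=\lambda_1(x_0)=\lambda_{\max}(x_0)=:M$, and (perturbing if the top eigenvalue is not simple) apply the maximum principle to an auxiliary function such as $\Phi:=\log W_{11}+\tfrac a2|\nabla u|^2$, with $a$ a large constant to be chosen; the $C^0$ and $C^1$ bounds for $u$, available independently, make the second term bounded. Writing \eqref{equ-s1} as $G(W_u)=-\alpha$ with $G$ as in \eqref{G-op} --- elliptic and concave on $\Gamma_{k-1}$ by Proposition~\ref{cone-concavity} --- I differentiate once and twice in the direction $e_1$. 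The concavity supplies the negative term $G^{ij,rs}\nabla_1W_{ij}\nabla_1W_{rs}\le 0$, which, in its sharp form, is paired against the gradient-squared terms produced by the logarithm; and the terms arising from differentiating the coefficients $\alpha_\ell$ (which involve $\sigma_\ell/\sigma_{k-1}$) are handled, as in Krylov's treatment, by writing $\alpha_\ell=(\alpha_\ell^{1/(k-\ell)})^{k-\ell}$ with $\alpha_\ell^{1/(k-\ell)}\in C^{1,1}$ --- legitimate because $\alpha_\ell\ge\inf\alpha_\ell>0$ --- and absorbing them by Cauchy--Schwarz into the concavity term.

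The geometry of the sphere enters through the commutation identities. On $\mathbb S^n$ the cubic tensor $\nabla W_u$ is totally symmetric --- the term $u_{ijk}-u_{ikj}$ produced by commuting covariant derivatives of $u$ cancels precisely the $\delta$-contribution from differentiating $u\,\delta_{ij}$ --- and commuting a further pair of derivatives, on a space of constant curvature $1$, produces only terms linear in $W_u$ with universal coefficients. Evaluated at $x_0$, where $W_u$ and $(G^{ij})$ are simultaneously diagonal, this yields
\[
\nabla_i\nabla_i W_{11}(x_0)=\nabla_1\nabla_1 W_{ii}(x_0)-\bigl(\lambda_{\max}-\lambda_i\bigr),\qquad i=1,\dots,n.
\]
Feeding this into the twice-differentiated equation and the maximum-principle inequality $G^{ij}\nabla_i\nabla_j\Phi(x_0)\le 0$, and multiplying by $W_{11}=M$, one reaches an inequality of the schematic form
\[
a\,M\sum_i G^{ii}(\lambda_i-u)^2\ \le\ C\,M\sum_i G^{ii}+\sum_i G^{ii}(\lambda_{\max}-\lambda_i)+C\,M\Bigl(1+\textstyle\sum_{\ell=0}^{k-2}\tfrac{\sigma_\ell}{\sigma_{k-1}}(W_u)(x_0)\Bigr),
\]
whose left-hand side is the good term $a\,M\,G^{ij}u_{mi}u_{mj}(x_0)$.

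The last step --- which I expect to be the real obstacle --- is to absorb everything into the good term. Here two facts are decisive. One is the positive lower bound $\sum_i G^{ii}\ge\sum_i\bigl(\tfrac{\sigma_k}{\sigma_{k-1}}\bigr)^{ii}\ge c(n,k)>0$, a byproduct of the Newton--Maclaurin estimate used in Proposition~\ref{cone-concavity}, so that $\sum_i G^{ii}(\lambda_i-u)^2\ge\tfrac12\sum_i G^{ii}\lambda_i^2-C\sum_i G^{ii}$. The other is that, since $\inf\alpha_\ell>0$, the borderline quantities $\sigma_\ell/\sigma_{k-1}(W_u)$ are themselves a priori bounded: if $\sigma_{k-2}(W_u)$ is much larger than $\sigma_{k-1}(W_u)$ then the equation forces $\sigma_k(W_u)>0$, i.e. $W_u\in\Gamma_k$, and Newton's inequality $\sigma_{k-1}^2\ge\sigma_{k-2}\sigma_k$ then bounds the ratio $\sigma_{k-2}/\sigma_{k-1}$; iterating Newton's inequalities downward propagates this to $\sigma_\ell(W_u)\le C\,\sigma_{k-1}(W_u)$ for every $0\le\ell\le k-1$, with $C$ of the asserted form, and this in turn renders $\sum_i G^{ii}$ comparable to a constant. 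With these in hand, choosing $a$ large relative to the constants produced along the way forces $M\le C$, and \eqref{c2-other} follows as noted. The delicate accounting --- in particular the sharp pairing of the concavity term against the logarithmic gradient terms, and the control of the regime in which a single eigenvalue is large (where, at $x_0$, the equation restricts to a $(k-1)$-type equation on the remaining eigenvalues) --- is where the real work lies.
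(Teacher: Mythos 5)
There is a genuine gap at the foundation of your maximum-principle argument: the auxiliary function $\Phi=\log W_{11}+\tfrac a2|\nabla u|^2$ (and the good term $a\,M\sum_iG^{ii}(\lambda_i-u)^2$ it produces) presupposes that ``$C^0$ and $C^1$ bounds for $u$'' are ``available independently.'' They are not. Equation \eqref{equ-s1} is invariant under $u\mapsto u+\sum_i a_ix_i$, so no bound on $\|u\|_{C^0}$ or $\|\nabla u\|_{C^0}$ in terms of the coefficient data can exist without the normalization \eqref{orth}; and even under \eqref{orth}, the paper obtains the $C^0$ bound (Proposition \ref{c0-est}) only \emph{after} and \emph{by means of} Proposition \ref{c2-non-deg}, through a blowup argument that requires the constant in \eqref{c2-est} to be independent of $\|u\|_{C^0}$ and $\|u\|_{C^1}$ --- which is exactly what the statement asserts ($C$ depends only on $n$, $\|\alpha\|_{C^{1,1}}$, $\|\alpha_\ell\|_{C^{1,1}}$, $\inf\alpha_\ell$). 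So even if every estimate in your computation were carried out, you would at best prove a weaker inequality with $C$ depending on $\|u\|_{C^1(\mathbb S^n)}$, which is not the stated proposition and could not feed into the rest of the paper's scheme.

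The fix is the paper's choice of test function: take $H=\mathrm{tr}\,W_u=\Delta u+nu$ (note $\lambda_{\max}\le H$ since $W_u\in\Gamma_1$), for which the spherical commutator identity $H_{ii}=\Delta W_{ii}-nW_{ii}+H$ already supplies the good zeroth-order term $H\sum_iG^{ii}$, with $\sum_iG^{ii}\ge\frac{n-k+1}{k}$ by Newton--MacLaurin; no gradient term, no logarithm of the top eigenvalue, and no lower-order bounds on $u$ are needed. The terms coming from differentiating the coefficients are absorbed using the concavity of $G_k=\sigma_k/\sigma_{k-1}$ and of $\bigl(-1/G_\ell\bigr)^{1/(k-1-\ell)}$ via a completed square, leaving only $\frac{|\nabla\alpha_\ell|^2}{\alpha_\ell}G_\ell$-type contributions. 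After that the only issue is controlling $\sigma_\ell/\sigma_{k-1}$ at the maximum point, and here your final paragraph is essentially correct and matches the paper's Steps 1--2: using $\inf\alpha_\ell>0$, the equation, and Newton's inequality one bounds $\sigma_{k-2}/\sigma_{k-1}$ (either $\sigma_k\le0$ and the equation gives the bound directly, or $\sigma_k>0$, $W_u\in\Gamma_k$, and the quadratic inequality in $t=\sigma_{k-2}/\sigma_{k-1}$ closes), and the lower ratios follow by cascading Newton--MacLaurin; the paper settles for the sublinear bound $CH^t$, $t<1$, which also suffices. In short, the core algebraic difficulties are handled correctly in your sketch, but the Pogorelov-type test function imports lower-order information that is unavailable (and must remain unused) at this stage, whereas the trace test function makes the argument close without it.
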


\medskip

\par
\noindent
{\it Proof:} We first note that the positivity of $\lambda_{\rm max}(x)$ follows from the fact that $u$ is an admissible solution, that is $W_u(x) \in \Gamma_{k-1}$. In particular, $W_u(x) \in \Gamma_1$. Therefore,
\[
0< \sigma_1(W_u) \leq n \, \lambda_{\max}.
\]
\par
To obtain the upper bound for $\lambda_{\max}$, we will consider the auxiliary function
\[
H= {\rm tr}(W_u) = \Delta u+ nu.
\]
Note that, again since $W_u\in \Gamma_{k-1}\subset \Gamma_1$, $\lambda_{\max} \leq \sigma_1(W_u) = H$. Then, it will be sufficient to derive an upper bound for $H$. Suppose that the maximum value of $H$ is attained at some point $x_0\in \mathbb S^n$. We may choose an orthonormal local frame $e_1, e_2, \cdots, e_n$ near $x_0$ such that $u_{ij}(x_0)$ is diagonal. For the standard metric on $\mathbb S^n$, one may easily check the commutator identity
\[
H_{ii} =\Delta W_{ii} - nW_{ii} +H,
\]
where $W_{ii}= u_{ii}+ u$.
\par
Since $u\in C^4(S^n)$ is an admissible solution, that is $W_u\in \Gamma_{k-1}$, it follows from the observation in Proposition \ref{cone-concavity} that $(G^{ij})$ is positive definite.
Moreover, at the maximal point $x_0$, $G^{ij}=  G_k^{ij}+ \sum_{\ell=0}^{k-2} \alpha_\ell\, G_\ell^{ij}$
is diagonal. By the above commutator identity, it follows that, at $x_0$,
\begin{eqnarray}\label{3k}
0\geq G^{ij}H_{ij} = G^{ii}\left(\Delta W_{ii}\right) - n G^{ii}W_{ii} + H \sum_{i=1}^n G^{ii}.
\end{eqnarray}
\noindent
Since $G_k$ is homogeneous of degree one and $G_\ell$ is homogeneous of degree $-(k-1-\ell)$ for $0\leq \ell \leq k-2$, we can compute
\begin{eqnarray}\label{4k}
G^{ii}W_{ii} &=& G_k-\sum_{\ell=0} ^{k-2}(k-1-\ell)\,\alpha_\ell\,G_\ell= -\alpha+ \sum_{\ell=0}^{k-2} (k-\ell) \alpha_\ell \, \frac{\sigma_\ell(W_u)}{\sigma_{k-1}(W_u)}.
\end{eqnarray}

Next we differentiate equation (\ref{equ-s1}) and obtain
\begin{eqnarray*}
G_k^{ij} W_{ijp}+ \sum_{\ell=0}^{k-2}\alpha_\ell\, G_\ell^{ij}W_{ijp} + \sum_{\ell=0}^{k-2} (\alpha_\ell)_p\, G_\ell=-(\alpha)_p,
\end{eqnarray*}
where $(\alpha_\ell)_p$ denotes the derivative of function $\alpha_\ell(x)$. Differentiate the equation another time and we obtain
\begin{eqnarray*}
 &&\sum_{p=1}^nG_k^{ij, rs} W_{ijp}W_{rsp} + G_k^{ij}  \,  \Delta W_{ij}
  +\sum_{p=1}^n \sum_{\ell=0}^{k-2} \alpha_\ell  \, G_{\ell}^{ij, rs} W_{ijp}W_{rsp}+ \sum_{p=1}^n\sum_{\ell=0}^{k-2} \alpha_\ell \, G_\ell^{ij}\Delta W_{ij} \\
  &&+ 2 \sum_{p=1}^n\sum_{\ell=0}^{k-2} (\alpha_\ell)_p \,  G_\ell^{ij} W_{ijp} + \sum_{\ell=0}^{k-2} \Delta(\alpha_\ell)  \, G_\ell =-\Delta \alpha.
\end{eqnarray*}
The above identities imply
\begin{eqnarray}\label{5k}
G^{ij} \Delta W_{ij} &=& - \sum_{p=1}^nG_k^{ij, rs} W_{ijp}W_{rsp}
- \sum_{p=1}^n\sum_{\ell=0}^{k-2} \alpha_\ell \, G_\ell^{ij, rs} W_{ijp}W_{rsp} \\\nonumber
&& - 2\sum_{p=1}^n\sum_{\ell=0}^{k-2} (\alpha_\ell)_p\, G_\ell^{ij} W_{ijp}- \sum_{\ell=0}^{k-2} \Delta(\alpha_\ell) \, G_\ell -\Delta \alpha.
\end{eqnarray}
Putting (\ref{5k}) and (\ref{3k}) into (\ref{4k}) and using the concavity of $G_k= \frac{\sigma_k}{\sigma_{k-1}}$, that is
\[
\sum_{ij, rs} \left(G_k\right){}^{ij, rs}  X_{ij} \,  X_{rs} \leq 0, \ \ \ \textit{\rm for any symmetric matrix } (X_{ij})\in \mathbb R^{n\times n},
\]
we obtain
\begin{eqnarray}\label{6k}
0\geq G^{ij} H_{ij} &\geq &
-\sum_{p=1}^n\sum_{\ell=0}^{k-2} \alpha_\ell \,G_\ell^{ij, rs} W_{ijp}W_{rsp} - 2 \sum_{p=1}^n\sum_{\ell=0}^{k-2} (\alpha_\ell)_p \, G_\ell^{ij} W_{ijp}\\\nonumber
&&
+ \sum_{\ell=0}^{k-2} \left(n(k-\ell)\alpha_\ell - \Delta\alpha_\ell\right)\, G_\ell
+H\sum_{i=1}^n G^{ii} + n\alpha - \Delta\alpha
\end{eqnarray}

\smallskip

Next, we need to estimate the right hand side of the above inequality. Essentially, we need to get a lower bound for the following terms
\begin{eqnarray}\label{est-concave2}
-\sum_{p=1}^n \alpha_\ell\, G_\ell^{ij, rs} W_{ijp}W_{rsp} - 2 \sum_{p=1}^n(\alpha_\ell)_p \,G_\ell^{ij} W_{ijp}, \ \ {\rm for } \ \ell=0, 1, \cdots, k-2.
\end{eqnarray}

It was shown by Krylov in \cite{Krylov95} that the operator $\left( \frac{\sigma_{k-1}}{\sigma_\ell}\right)^{\frac{1}{k-1-\ell}}$ is concave for $0\leq \ell\leq k-2$. It follows that
\[
\left( - \frac{1}{G_\ell}\right)^{\frac{1}{k-1-\ell}} \text{ is a concave operator for } \forall \, \ell=0, 1, \cdots, k-2.
\]
 Then, for any symmetric matrix $ (X_{ij})\in \mathbb R^{n\times n}$, we have
\[
\sum_{ij, rs} \left\{ \left( - \frac{1}{G_\ell}\right)^{\frac{1}{k-1-\ell}}\right\}^{ij, rs} X_{ij} X_{rs}\leq 0.
\]
By direct computation, we have
\[
\left\{\left (-\frac{1}{G_\ell}\right)^{\frac{1}{k-1-\ell}+1}  G_\ell^{ij, rs} + \left(1+\frac{1}{k-1-\ell}\right) \left( - \frac{1}{G_\ell}\right)^{\frac{1}{k-1-\ell}+2}G_\ell^{ij}\,G_\ell^{rs}\right\} X_{ij}X_{rs} \leq 0.
\]
As a consequence, we have
\begin{eqnarray}\label{concavity}
- G_\ell^{ij, rs} X_{ij}X_{rs} \geq - \left( 1+ \frac{1}{k+1-\ell}\right) G_\ell^{-1}G_\ell^{ij}\,G_\ell^{rs} X_{ij}X_{rs},
\end{eqnarray}
for $0\leq \ell \leq k-2$. Using this inequality from concavity of the operators, we can estimate
\begin{eqnarray*}
&&-\sum_{p=1}^n \alpha_\ell \,G_\ell^{ij, rs} W_{ijp}W_{rsp} - 2 \sum_{p=1}^n(\alpha_\ell)_p \,G_\ell^{ij} W_{ijp}\\
&\geq & -  \left( 1+ \frac{1}{k+1-\ell}\right) \alpha_\ell\, G_\ell^{-1}\,\sum_{p=1}^n G_\ell^{ij}\,G_\ell^{rs} W_{ijp}W_{rsp}- 2 \sum_{p=1}^n (\alpha_\ell)_p\, G_\ell^{ij} W_{ijp}\\
& = & -  \left( 1+ \frac{1}{k+1-\ell}\right) \alpha_\ell\, G_\ell^{-1}\,\sum_{p=1}^n \left(G_\ell^{ij}\,G_\ell^{rs}W_{ijp}W_{rsp} + \frac{2} {1+ \frac{1}{k+1-\ell}} \frac{(\alpha_\ell)_p} {\alpha_\ell}\, G_\ell \, G_\ell^{ij} W_{ijp}\right)\\
&=&  - {k-\ell + 2\over k-\ell +1}  \alpha_\ell\, G_\ell^{-1}\, \sum_{p=1}^n\left(\sum_{i,j} G_\ell^{ij} W_{ijp} + \frac{1} {1+ \frac{1}{k+1-\ell}} \frac{(\alpha_\ell)_p} {\alpha_\ell}\, G_\ell \right)^2+\frac{1} {1+ \frac{1}{k+1-\ell}} \sum_{p=1}^n \frac{(\alpha_\ell)^2_p}{\alpha_\ell} \,G_\ell.
\\
&\geq&  \frac{1} {1+ \frac{1}{k+1-\ell}} \sum_{p=1}^n \frac{(\alpha_\ell)^2_p}{\alpha_\ell} \,G_\ell
\end{eqnarray*}
where we used the fact that $\alpha_\ell> 0$ and $G_\ell <0$ to obtain the last inequality for $0\leq \ell \leq k-2$.

Putting the above estimates back to (\ref{6k}), we obtain,
\begin{eqnarray}\label{7k}
0& \geq& G^{ij}H_{ij}  \geq
  \sum_{\ell=0}^{k-2} \left( n\,(k-l) \alpha_\ell - \Delta\alpha_\ell + \frac{|\nabla\alpha_\ell|^2}{\alpha_\ell}\right) G_\ell + H\sum_{i=1}^n G^{ii} -C\\\nonumber
&\geq & H\sum_{i=1}^n G^{ii} - C_1\sum_{\ell=0}^{k-2} \frac{\sigma_\ell}{\sigma_{k-1}}-C
\end{eqnarray}
where $C_1$ is a positive constant depending on $\|\alpha_\ell\|_{C^{1, 1}(S^n)}$ with $0\leq \ell \leq k-2$. To get the desired estimate, we still need to get a lower bound for $\sum_{i=1}^n G^{ii}$. By the definition of operator $G$ and straightforward computation, we have
\begin{eqnarray*}
\sum_{i=1}^n G^{ii} &=& \sum_{i=1}^n \left(\frac{\sigma_k}{\sigma_{k-1}}  \right)^{ii} - \sum_{i=1}^n\sum_{\ell=0}^{k-2} \alpha_\ell \left( \frac{\sigma_\ell}{\sigma_{k-1}}\right)^{ii}\\
&=& \sum_{i=1}^n \frac{\sigma_{k-1}(\lambda | i)}{\sigma_{k-1}} - \frac{\sigma_k}{\sigma_{k-1}^2} \sum_{i=1}^n \sigma_{k-2}(\lambda | i) + \frac{ \alpha_0}{\sigma_{k-1}^2} \sum_{i=1}^n \sigma_{k-2}(\lambda | i)\\
&&+\sum_{\ell=1}^{k-2}\alpha_\ell\frac{\sum_i \sigma_\ell\sigma_{k-2}(\lambda | i) - \sum_i\sigma_{k-1} \sigma_{\ell-1}(\lambda | i)}{\sigma_{k-1}^2}
\\& = & (n-k+1) - (n-k+2)\frac{\sigma_k \sigma_{k-2}}{\sigma_{k-1}^2} + (n-k+2)  \alpha_0\frac{\sigma_{k-2}}{\sigma_{k-1}^2} \\
 &&+ \sum_{\ell=1}^{k-2} \alpha_\ell \frac{ (n-k+2) \sigma_\ell \sigma_{k-2} - (n-\ell+1) \sigma_{k-1}\sigma_{\ell-1} }{\sigma_{k-1}^2}\\
\end{eqnarray*}
It follows from the Newton-MacLaurin inequality that
\begin{eqnarray*}
\frac{\sigma_k\, \sigma_{k-2}}{\sigma_{k-1}^2} \leq \frac{k-1}{k}\frac{n-k+1}{n-k+2}, \, \textit{\rm and } \  \frac{\sigma_{k-1}\,\sigma_{\ell-1}}{\sigma_\ell \sigma_{k-2}}\leq \frac{n-k+2}{n-\ell+1} \ \textit{ for } 1\leq \ell \leq k.
\end{eqnarray*}
Therefore, we have
\begin{eqnarray*}
\sum_{i=1}^n G^{ii} \geq \frac{n-k+1}{k}.
\end{eqnarray*}
Back to (\ref{7k}), we obtain
\begin{eqnarray}\label{8k}
0 \geq \frac{n-k+1}{k} H - C_1\sum_{\ell=0}^{k-2} \frac{\sigma_\ell}{\sigma_{k-1}}-C
\end{eqnarray}
To get the desired upper bound for $H$, it is enough to show that
\[
\frac{\sigma_\ell}{\sigma_{k-1}} \leq CH^{t} \ \ \textit{\rm for } \forall \, \ell=0, 1, \cdots, k-2
\]
for some uniform constant $C$ and $t<1$.

\medskip

Now, we are in the place to use the non-degeneracy condition in Proposition \ref{c2-non-deg}. Without loss of generality, we may assume that
\[
\alpha_m(x)\geq c_0>0
\]
 where $0\leq m\leq k-2$ is the largest integer such that $\alpha_\ell(x)\neq 0$. Under this assumption, the equation under consideration becomes
\begin{eqnarray}\label{9k}
\sigma_{k}(W_u) + \alpha(x) \sigma_{k-1}(W_u) = \sum_{\ell=0}^{m}\alpha_{\ell}(x)\,\sigma_\ell(W_u),
\end{eqnarray}
where $m\leq k-2$ and $\alpha_{m}(x) \geq c_0>0$ and $\alpha_l \geq 0$ for $0\leq l \leq m-1$. We would like to bound $\frac{\sigma_{l}}{\sigma_{k-1}}$ from above for any $0\leq l\leq m$.

\smallskip

{\it Step 1: Using the non-degenerate assumption $\alpha_m(x)\geq c_0>0$ to control the leading term}
\[
\frac{\sigma_m}{\sigma_{k-1}}(W_u(x_0)).
\]
 Note that, by equation (\ref{9k}),
\[
\sigma_k + \alpha(x) \sigma_{k-1} \geq \alpha_m(x) \sigma_{m}\geq c_0 \,\sigma_m.
\]
Recall that $W_u\in \Gamma_{k-1}$ and therefore, we have either $\sigma_k(W_u)\geq 0$ or $\sigma_k(W_u)\leq 0$ at point $x_0\in \mathbb S^n$. If $\sigma_k(W_u(x_0)) \leq 0$, then we are done since the above inequality implies
\[
\frac{\sigma_m}{\sigma_{k-1}}\leq | \alpha(x_0) |.
\]
Now, we assume that $\sigma_{k}(W_u(x_0))\geq 0$. We will discuss into two cases.
\begin{itemize}
 \item[{\it Case 1.}] If $\frac{\sigma_k}{\sigma_{k-1}}\leq C H^{\frac{1}{k}}$, then we get
 \begin{eqnarray}\label{10k}
 \frac{\sigma_m}{\sigma_{k-1}} \leq \frac{1}{c_0} \left( \frac{\sigma_k}{\sigma_{k-1}} + \alpha(x) \right) \leq C H^{\frac{1}{k}}.
 \end{eqnarray}

 \medskip

 \item[{\it Case 2.}] If $\frac{\sigma_k}{\sigma_{k-1}}\geq C H^{\frac{1}{k}}$, i.e., $\frac{\sigma_{k-1}}{\sigma_{k}}\leq C H^{-\frac{1}{k}}$. Note that
\[
\frac{\sigma_m}{\sigma_{k-1}}= \frac{\sigma_m}{\sigma_{m+1}}\, \cdot\frac{\sigma_{m+1}}{\sigma_{m+2}}\,\cdots\, \frac{\sigma_{k-2}}{\sigma_{k-1}}.
\]

 Then, it follows from the Newton-MacLaurin's inequality that
\begin{eqnarray}\label{11k}
\frac{\sigma_m}{\sigma_{k-1}} \leq C\left(\frac{\sigma_{k-1}}{\sigma_{k}} \right)^{k-1-m} \leq C H^{-\frac{k-1-m}{k}}.
\end{eqnarray}
\end{itemize}

\medskip

\par
{\it Step 2:  Estimate}
\[
\frac{\sigma_\ell}{\sigma_{k-1}}(W_u(x_0)) \ \textit{\rm for } \, 0\leq \ell <m.
\]
We observe that
\[
\frac{\sigma_m}{\sigma_{k-1}}= \frac{\sigma_m}{\sigma_{m+1}}\,\frac{\sigma_{m+1}}{\sigma_{m+2}}\,\cdots \,\frac{\sigma_{k-2}}{\sigma_{k-1}} \geq C\left(\frac{\sigma_{m}}{\sigma_{m+1}} \right)^{k-1-m}.
\]
and then
\[
\frac{\sigma_{\ell}}{\sigma_{\ell+1}} \leq \frac{\sigma_{l+1}}{\sigma_{l+2}} \leq \cdots \leq \frac{\sigma_{m-1}}{\sigma_{m}} \leq \frac{\sigma_{m}}{\sigma_{m+1}} \leq \left(\frac{\sigma_m}{\sigma_{k-1}} \right)^{\frac{1}{k-1-m}}.
\]
On the other hand,
\[
\frac{\sigma_{\ell}}{\sigma_{k-1}} = \frac{\sigma_{\ell}}{\sigma_{\ell+1}} \cdots \frac{\sigma_{m-1}}{\sigma_{m}}\cdot \frac{\sigma_{m}}{\sigma_{k-1}}
\]
and this implies
\[
\frac{\sigma_{\ell}}{\sigma_{k-1}} \leq \left(\frac{\sigma_m}{\sigma_{m+1}} \right)^{m-\ell} \cdot \frac{\sigma_m}{\sigma_{k-1}} \leq \left(\frac{\sigma_m}{\sigma_{k-1}} \right)^{1+ \frac{m-\ell}{k-1-\ell}}.
\]
By making use of the estimate (\ref{10k}) and (\ref{11k}), we have, in Case 1,
\begin{eqnarray}\label{12k}
\frac{\sigma_{\ell}}{\sigma_{k-1}} \leq C H^{\frac{1}{k} \left( 1+\frac{m-\ell}{k-1-\ell} \right)};
\end{eqnarray}
and in Case 2,
\begin{eqnarray}\label{13k}
\frac{\sigma_{\ell}}{\sigma_{k-1}} \leq C H^{-\frac{k-1-m}{k} \left( 1+\frac{m-\ell}{k-1-\ell} \right)}, \ \ \ {\rm for } \ \ 0\leq \ell <m.
\end{eqnarray}

Putting (\ref{10k}),  (\ref{11k}),  (\ref{12k}) and  (\ref{13k}) into  (\ref{8k}), we see that, in both cases, the first term $ \frac{n-k+1}{k} H$ is the higher order term. Therefore,
\[
H(x_0)\leq C.
\]
This proves the estimate for $\lambda_{\max}$.

\par
To derive the estimate (\ref{c2-other}), we again use the fact that $\sigma_1(W_u)>0$ since $W_u\in \Gamma_{k-1} \subset \Gamma_1$. Suppose $\lambda_1$ is the smallest eigenvalue of $W_u(x)$. Then, it follows that
\[
\lambda_1 \geq - \lambda_2 - \lambda_3 - \cdots - \lambda_n \geq -(n-1) \, \lambda_{\max} \geq -(n-1)\, C.
\]
Therefore, we have the desired estimates for $\lambda_i(x)$.

\qed

\

\par
Next, to obtain the a priori estimates up to $C^{2, \gamma}$ for some $0<\gamma<1$, it is enough to derive the a priori $C^2$ estimate for $u$. Then the H\"older estimate for the $D^2 u$ will follow from the Evans-Krylov theorem for concave and uniform elliptic PDEs. The the bootstrapping argument by applying Schauder estimates repeatedly will lead to the estimate on $C^{m+2, \gamma}$ norm as claimed (\ref{est-m2}) in the main theorem.

\

\subsubsection{$C^0$ estimate} $\\$

In viewing of the estimate obtained in Proposition \ref{c2-non-deg}, the estimate on $\|D^2u\|_{L^\infty(S^n)}$ will follow from the $C^0$ estimate of $u$. In general, for the prescribing curvature problem \cite{Guan-Guan} and the Christoffel-Minkowski problem \cite{Guan-Ma}, the $C^0$ estimates are obtained by using Cheng-Yau's argument \cite{Cheng-Yau} which makes essential use of the quermassintegral inequalities for convex hypersurfaces (that is $W_u \in \Gamma_n$). However, such inequalities are not available for non-convex case as we are working for $W_u\in \Gamma_{k-1}$. So, we will adapt the alternative argument provided in \cite{Guan-Guan}.

\begin{proposition}\label{c0-est}
Suppose $u\in C^4(\mathbb S^n)$ is an admissible solution of equation (\ref{equ-s1}) and $u$ satisfies (\ref{orth}). Then, there exists a positive constant $C$ depending on $n, k$, $\|\alpha(x)\|_{C^{1, 1}(S^n)}$, $\|\alpha_\ell(x)\|_{C^{1,1}(S^n)}$ and $\inf_{S^n} \alpha_\ell(x)$ such that
\[
\|u\|_{C^0(\mathbb S^n)} \leq C.
\]
\end{proposition}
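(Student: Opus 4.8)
The plan is to convert the $C^0$ bound into a linear elliptic estimate for the operator $\Delta+n$ on $\mathbb S^n$: Proposition \ref{c2-non-deg} controls its right-hand side, and the normalization (\ref{orth}) removes its kernel. First I would observe that an admissible $u$ has $W_u\in\Gamma_{k-1}\subset\Gamma_1$, so $\sigma_1(W_u)={\rm tr}\,W_u=\Delta u+nu>0$, and combining this with the bound $\lambda_{\max}\le C$ from Proposition \ref{c2-non-deg} gives
\[
0<\Delta u+nu=\sigma_1(W_u)\le n\,\lambda_{\max}(x)\le C\qquad\text{on }\mathbb S^n,
\]
with $C$ of the asserted form. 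Hence $f:=\Delta u+nu$ is bounded in $L^\infty(\mathbb S^n)$, and $u$ solves the \emph{linear} equation $\Delta u+nu=f$ on $\mathbb S^n$.

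The next step is to invert $\Delta+n$. Its eigenvalue on the space of degree-$\ell$ spherical harmonics is $n-\ell(\ell+n-1)$, which vanishes exactly when $\ell=1$ --- that is, precisely on ${\rm span}\{x_1,\dots,x_{n+1}\}$ --- and otherwise satisfies $|n-\ell(\ell+n-1)|\ge n$. Condition (\ref{orth}) says exactly that $u$ is $L^2$-orthogonal to this kernel; moreover $f$ is automatically orthogonal to it, since $\int_{\mathbb S^n}x_i(\Delta u+nu)\,dx=\int_{\mathbb S^n}(\Delta x_i+nx_i)u\,dx=0$. Expanding $u$ and $f$ in spherical harmonics then yields $\|u\|_{L^2(\mathbb S^n)}\le \tfrac1n\|f\|_{L^2(\mathbb S^n)}\le C$.

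Finally I would bootstrap. Writing $\Delta u=f-nu$ with $f\in L^\infty$ and starting from $u\in L^2$, the elliptic $L^p$ estimates together with the Sobolev embedding raise the integrability of $u$ in finitely many steps, until $u\in W^{2,p}(\mathbb S^n)$ for some $p>n$; Morrey's embedding then gives $\|u\|_{C^{1,\gamma}(\mathbb S^n)}\le C$, in particular $\|u\|_{C^0(\mathbb S^n)}\le C$, which is the claim. (This simultaneously delivers the $C^{1,\gamma}$ bound feeding the Evans--Krylov and Schauder bootstrap for (\ref{est-m2}).)

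I do not expect a genuine obstacle here: essentially all the work sits in Proposition \ref{c2-non-deg}, and the remainder is standard linear theory. The one point that needs care is non-circularity --- the constant in Proposition \ref{c2-non-deg} must not depend on $\|u\|_{C^0}$, which holds because its proof uses only the equation, the positivity of $(G^{ij})$, and the concavity inequalities from Proposition \ref{cone-concavity}. This linear argument is precisely the substitute, available once one only knows $W_u\in\Gamma_{k-1}$, for the Cheng--Yau quermassintegral argument used in the fully convex case $W_u\in\Gamma_n$.
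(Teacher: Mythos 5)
Your proposal is correct, but it takes a genuinely different route from the paper. The paper proves the $C^0$ bound by a blow-up/compactness argument in the style of Guan--Guan: assuming no bound, it normalizes $v^m=u^m/\|u^m\|_{L^\infty}$, uses Proposition \ref{c2-non-deg} to force $\lambda_i(W_{v^m})\to 0$, extracts a $C^{1,\gamma}$-convergent subsequence, and obtains in the limit a nontrivial solution of $\Delta v+nv=0$ satisfying (\ref{orth}), a contradiction since (\ref{orth}) kills the first spherical harmonics. You instead argue directly: the same Proposition \ref{c2-non-deg} gives the pointwise two-sided bound $0<\Delta u+nu=\sigma_1(W_u)\le n\lambda_{\max}\le C$, so $u$ solves a linear equation $(\Delta+n)u=f$ with $\|f\|_{L^\infty}\le C$, and since both $u$ (by (\ref{orth})) and $f$ (by self-adjointness, as $\Delta x_i+nx_i=0$) are orthogonal to the kernel of $\Delta+n$, the spectral gap yields $\|u\|_{L^2}\le \tfrac1n\|f\|_{L^2}$, after which the standard $L^p$/Sobolev bootstrap gives the $C^0$ (indeed $C^{1,\gamma}$) bound. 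Both arguments rest on the same two pillars --- the fact that the constant in Proposition \ref{c2-non-deg} is independent of $\|u\|_{C^0}$ (your non-circularity check is the right thing to verify, and it holds), and the removal of the kernel of $\Delta+n$ by (\ref{orth}). What your version buys is an effective, quantitative constant and a simultaneous gradient bound without any compactness or contradiction; what the paper's soft argument buys is flexibility (it tolerates sequences of coefficients with only uniform bounds and needs no spectral computation beyond identifying the kernel), which is the form in which such estimates are typically reused in degree-theoretic continuity arguments.
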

\medskip
\begin{proof}
We will prove the estimate by a blowup argument. Suppose there is no such bound, then there exist $u^m$, $m=1, 2, \cdots $ satisfying equation (\ref{equ-s1}), a constant $\tilde{C}$ independent of $m$ and
\[
\sigma_k(W_{u^m}(x))+ \alpha^m(x)\sigma_{k-1}(W_{u^m}(x)) =\sum_{\ell=0} ^{k-2} \alpha^m_{\ell}(x) \, \sigma_{\ell}(W_{u^m}(x)), \  \ x\in \mathbb S^n
\]
with $\alpha^m(x)$ and $\alpha^m_{\ell}(x)\geq 0$ satisfying
\[
\|\alpha^m\|_{C^2(\mathbb S^n)} \leq \tilde C, \ \ \|\alpha^m_{\ell}\|_{C^2(\mathbb S^n)} \leq \tilde C, \  \ \sum_{\ell=0}^{k-2} \alpha^m_\ell(x)>0
\]
but \[
\|u^m\|_{L^\infty(\mathbb S^n)} \geq m.
\]

\medskip
\par
Let $v^m = {u^m\over \|u^m\|_{L^\infty(\mathbb S^n)}}$. Then, for any $m=1, 2, \cdots$
\begin{eqnarray}\label{unif-c0}
\|v^m\|_{L^\infty(\mathbb S^n)} =1.
\end{eqnarray}
By our previous estimates in Proposition \ref{c2-non-deg}, we have for any eigenvalues $\lambda_i(W_{u^m}(x))$ of $W_{u^m}(x)$,
\[
\left| \lambda_i(W_{u^m}(x)) \right| \leq (n-1) \, \lambda_{\max}(W_{u^m}(x)) \leq C.
\]
It is important to note that the constant $C$ here is independent of $m$. This implies that $v^m$ satisfies the following estimates
\begin{eqnarray}\label{unif-c2}
\left| \lambda_i(W_{v^m}(x)) \right| \leq (n-1) \, \lambda_{\max}(W_{v^m}(x)) \leq {C \over \|u^m\|_{L^\infty(\mathbb S^n)}} \longrightarrow 0.
\end{eqnarray}
In particular, we obtain
\[
\Delta v^m + n \, v^m  \longrightarrow 0.
\]
\smallskip

\par
On the other hand, for a general function $w\in C^4(\mathbb S^n)$ satisfying $W_w = w_{ij} + w\, \delta_{ij}\in \Gamma_{k-1}$, we know that there is a constant $C$ depending only on $n$, $\max_{x\in \mathbb S^n} \lambda_{\max}(x)$, and $\max_{\mathbb S^n} |w|$ such that
\[
\|w\|_{C^2(\mathbb S^n)} \leq C.
\]
Indeed, the second derivative bound follows from the fact that $W_w\in \Gamma_{k-1}\subset \Gamma_1$ and the first derivative bound follows from interpolation.
\par
Now, we apply this fact to $v^m$. Note that, for $v^m$, we have estimates (\ref{unif-c0}) and (\ref{unif-c2}). Therefore,
\[
\|v^m\|_{C^2(\mathbb S^n)} \leq C
\]
for some $C$ independent of $m$. Hence, there exists a subsequence $v^{m_i}$ and a function $v\in C^{1, \gamma}(\mathbb S^n)$ satisfying (\ref{orth}) such that
\[
v^{m_i} \to v \ \textit{ in } \ C^{1, \gamma}(\mathbb S^n) \ \textit{ with }  \ \|v\|_{L^\infty(\mathbb S^n)} =1
\]
Moreover, we also have
\[
\Delta v + n\, v =0 \ \textit{ on } \ \mathbb S^n
\]
in the distribution sense. By linear elliptic theory, $v$ is in fact smooth. Since $v$ satisfies the orthogonal condition (\ref{orth}), we conclude that $v\equiv 0$ on $\mathbb S^n$. But this is a contradiction to the fact that $\|v\|_{L^\infty(\mathbb S^n)} =1$.

\end{proof}

\bigskip

\subsection{Existence via Degree theory} $\\$

\par
The main goal of this subsection is to establish the existence part for equation \eqref{equ-s1}. With the {\it a priori} estimates obtained in the previous section, one may wish to apply the continuity method to get the existence. This leads to study the linearized operator $L$ of the operator $G$ given in \eqref{G-op}. However, as in the discussion for quotient operator in \cite{Guan-Guan}, it is difficult to verify the kernel of $L$. Instead, we will approach the problem using degree theory, which is the only place we need the {\it group invariance} assumption.

\medskip

\par
In order to compute the degree, we need a uniqueness result. The following uniqueness result was proved in \cite{Guan-Ma-Zhou}.

\begin{proposition}\label{uniqueness}
Let $\Gamma$ be an open, symmetric subset in $\mathbb R^n$ and $\Gamma\subset \Gamma_1= \{\,\lambda\, | \,\sum_{j=1}^n \lambda_j>0\}$ is convex. We assume that $Q$ is a $C^{2, \gamma}$ function defined in $\Gamma$ for some $0<\gamma<1$ and satisfies the following conditions in $\Gamma$:
\begin{align}
\label{ellipticity} \frac{\partial Q}{\partial \lambda_i} (\lambda) >0 \ &\text{ for } \forall \ i=1, \cdots, n \text{ and } \lambda \in \Gamma,\\
\label{concavity} Q &\text{ is concave in } \Gamma.
\end{align}
If $u$ is an admissible solution of the equation
\begin{align}\label{equationQ}
Q(u_{ij}+\delta_{ij} u) = Q(I_n) \ \text{ on } \mathbb S^n,
\end{align}
then $u= 1+ \sum_{j=1}^{n+1} a_j x_j$ for some constants $a_1, \cdots, a_{n+1}$.
\end{proposition}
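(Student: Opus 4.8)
The plan is to reduce the equation to a homogeneous linear elliptic equation for the difference $w:=u-1$, to show that the only elements of its kernel are restrictions of linear functions, and then to invoke the standard rigidity for $\nabla^2 w+wg\equiv 0$ on $\mathbb S^n$.

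First observe that $u\equiv 1$ is an admissible solution, since then $u_{ij}+\delta_{ij}u=I_n$, and more generally $u\equiv 1+\sum_j a_j x_j$ is a solution because $\ell_{ij}+\ell\,\delta_{ij}=0$ for any $\ell=\sum_j a_j x_j$; thus the assertion is equivalent to saying that $w=u-1$ is the restriction of a linear function, i.e. that $W_w:=w_{ij}+w\delta_{ij}\equiv 0$. Since $\Gamma$ is convex and $I_n,\ u_{ij}+\delta_{ij}u\in\Gamma$, the whole segment $A_t(x):=(1-t)I_n+t\,(u_{ij}(x)+\delta_{ij}u(x))$ lies in $\Gamma$, and writing $0=Q(u_{ij}+\delta_{ij}u)-Q(I_n)=\int_0^1\frac{d}{dt}Q(A_t)\,dt$ gives
\[
\mathcal L w:= b^{ij}(x)\,w_{ij}+\Big(\textstyle\sum_i b^{ii}(x)\Big)\,w=0\quad\text{on }\mathbb S^n,\qquad b^{ij}(x):=\int_0^1 Q^{ij}(A_t(x))\,dt,
\]
where $(b^{ij})$ is symmetric and, by \eqref{ellipticity}, positive definite; by elliptic regularity $w$, hence $b^{ij}$, are smooth, and $\mathcal L$ visibly annihilates restrictions of linear functions. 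Moreover $W_u:=u_{ij}+\delta_{ij}u$ is a Codazzi tensor on $\mathbb S^n$ (using $\mathrm{Ric}=(n-1)g$, $\nabla_i W_{jk}$ is totally symmetric), hence so is each $A_t$; since $Q$ is $O(n)$-invariant, the Newton-tensor identity yields $\nabla_i b^{ij}=0$, so $\mathcal L$ is formally self-adjoint on $(\mathbb S^n,g)$, which is convenient. Finally, the tangent inequalities of the concave function $Q$ at $I_n$ and at $W_u$ give the two one-sided inequalities $\Delta w+nw\ge 0$ and $Q^{ij}(W_u)w_{ij}+\mathrm{tr}(Q'(W_u))\,w\le 0$ on $\mathbb S^n$; integrating the first gives $\int_{\mathbb S^n} w\ge 0$.

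The heart of the matter is to show $\ker\mathcal L$ consists exactly of restrictions of linear functions, equivalently $W_w\equiv 0$. The obstruction is structural: the zeroth order coefficient $\sum_i b^{ii}$ is positive, so no plain maximum principle applies — indeed uniqueness can hold only modulo the $(n+1)$-dimensional space of linear functions — and one must exploit the finer structure. I would run a maximum principle on the largest eigenvalue $\Lambda(x):=\lambda_{\max}(W_w(x))=\lambda_{\max}(W_u(x))-1$: at an interior maximum $x_0$ of $\Lambda$, choosing a frame in which $W_u(x_0)$ is diagonal with top entry in the $e_1$ slot, the scalar $(W_u)_{11}$ attains a maximum at $x_0$, so $Q^{ij}(W_u)\,\nabla_{ij}(W_u)_{11}\le 0$ there; commuting covariant derivatives on $\mathbb S^n$ (the tensor analogue of the identity $H_{ii}=\Delta W_{ii}-nW_{ii}+H$ used in Proposition~\ref{c2-non-deg}) and differentiating $Q(W_u)=Q(I_n)$ twice in the $e_1$-direction — where \eqref{concavity} makes the term $Q^{ij,rs}(W_u)_{ij1}(W_u)_{rs1}\le 0$ — should force $\Lambda(x_0)\le 0$, hence $\lambda_{\max}(W_w)\le 0$ everywhere; the analogous argument at a minimum of $\lambda_{\min}(W_u)$ gives $\lambda_{\min}(W_w)\ge 0$, so $W_w\equiv 0$. (Alternatively one can deform $b^{ij}$ to $\delta_{ij}$ through positive-definite divergence-free tensors and control the nullity of the resulting self-adjoint family, all of which contain the linear functions in their kernel; or combine $\mathcal L w=0$ with a Bochner/Reilly identity and the divergence-free structure to force the trace-free part of $W_w$ and then $\int_{\mathbb S^n}w$ to vanish.) I expect this step — extracting the decisive sign from the $\mathbb S^n$ curvature terms while tracking all lower-order contributions — to be the main difficulty.

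Once $W_w=w_{ij}+w\delta_{ij}\equiv 0$ on $\mathbb S^n$, the $1$-homogeneous extension $\tilde w(y):=|y|\,w(y/|y|)$ on $\mathbb R^{n+1}\setminus\{0\}$ has vanishing Euclidean Hessian — its tangential part along $\mathbb S^n$ is $W_w$ and homogeneity kills the radial direction — hence $\tilde w$ is affine; $1$-homogeneity forces the constant term to vanish, so $\tilde w(y)=\langle a,y\rangle$ and $w=\sum_{j=1}^{n+1}a_j x_j$ on $\mathbb S^n$. Therefore $u=1+\sum_{j=1}^{n+1}a_j x_j$, which proves the proposition.
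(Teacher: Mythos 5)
The paper does not actually prove this proposition; it is quoted from \cite{Guan-Ma-Zhou}. Your strategy (maximum principle for the largest eigenvalue of $W_u$, using the Codazzi property of $W_u$ on $\mathbb S^n$, the commutator identity, and concavity) is the right one, and your reduction at the beginning and the final step ($W_w\equiv 0$ implies $w$ linear, via the $1$-homogeneous extension) are fine. But the decisive step is left at ``should force $\Lambda(x_0)\le 0$'', and what your computation actually yields is weaker. At a maximum point $x_0$ of $\lambda_{\max}(W_u)$, with $W=W_u$ diagonal and $W_{11}=\lambda_{\max}$, the identity $\nabla_{ii}W_{11}=\nabla_{11}W_{ii}+W_{11}-W_{ii}$ together with the twice-differentiated equation and \eqref{concavity} gives
\[
0\;\ge\;\sum_i Q^{ii}\nabla_{ii}W_{11}\;\ge\;\sum_i Q^{ii}\bigl(W_{11}-W_{ii}\bigr)\;\ge\;0,
\]
hence only that $W(x_0)=\Lambda_0 I_n$ with $\Lambda_0=\lambda_{\max}(x_0)$, i.e.\ umbilicity at $x_0$ --- not yet $\Lambda_0\le 1$. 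To conclude you must add the missing argument: $Q(\Lambda_0 I_n)=Q(I_n)$, the segment joining $I_n$ and $\Lambda_0 I_n$ lies in $\Gamma$ by convexity, and \eqref{ellipticity} makes $t\mapsto Q\bigl(tI_n+(1-t)\Lambda_0 I_n\bigr)$ strictly monotone, which forces $\Lambda_0=1$. This is exactly where the convexity of $\Gamma$ and the ellipticity enter, and it is not in your text.

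The second gap is that the claimed ``analogous argument at a minimum of $\lambda_{\min}$'' does not work: at a minimum point of the smallest eigenvalue the test inequality reads $\sum_i Q^{ii}\nabla_{ii}W_{nn}\ge 0$, while concavity again gives $\sum_i Q^{ii}\nabla_{nn}W_{ii}\ge 0$ and the curvature term $\sum_i Q^{ii}(W_{nn}-W_{ii})$ is now $\le 0$; the three signs are mutually consistent and yield no information, so you cannot conclude $\lambda_{\min}(W_w)\ge 0$ this way. Fortunately this half is unnecessary: once $\lambda_{\max}(W_u)\le 1$, i.e.\ $W_u\le I_n$ pointwise, your own first-paragraph identity $0=Q(W_u)-Q(I_n)=b^{ij}\,(W_u-I_n)_{ij}$ with $(b^{ij})>0$ forces $W_u\equiv I_n$, i.e.\ $W_w\equiv 0$, and the proof is complete. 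Two smaller remarks: the side claim $\nabla_i b^{ij}=0$ ``by the Newton-tensor identity'' is not justified for a general symmetric $Q$ (it fails already for $Q=\sigma_1^2$), but you never use it; and the unused observations about $\Delta w+nw\ge 0$ and self-adjointness of $\mathcal L$ can simply be dropped.
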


\medskip

\par
From the discussion on the structure of the operator $G$ in Proposition \ref{cone-concavity}, we see that the operator $G$ satisfies conditions \eqref{ellipticity} and \eqref{concavity} with $\Gamma= \Gamma_{k-1}$. As a direct consequence of the above proposition, we obtain
\begin{corollary}\label{uniquenessG0}
Suppose $u$ is an admissible solution of the equation
\begin{align}\label{equationG0}
G_0(u_{ij}+ \delta_{ij} u) = G_0(I_{n}) \ \text{ on } \mathbb S^n,
\end{align}
with $G_0(W) = \frac{\sigma_k}{\sigma_{k-1}}(W)  - \sum_{\ell=0}^{k-2} \frac{\sigma_\ell}{\sigma_{k-1}}(W)$. Then
\[
u= 1+ \sum_{j=1}^{n+1} a_j x_j
\]
 for some constants $a_1, \cdots, a_{n+1}$.
\end{corollary}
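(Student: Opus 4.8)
The plan is to derive Corollary~\ref{uniquenessG0} as an immediate application of Proposition~\ref{uniqueness}. First I would observe that the operator $G_0$ in the statement is exactly the operator $G$ of \eqref{G-op} with all coefficients set to $\alpha_\ell(x)\equiv 1$; more precisely, $G_0(W)=G_k(W)+\sum_{\ell=0}^{k-2}G_\ell(W)$ in the notation of \eqref{operatorG}. Since $\alpha_\ell\equiv 1>0$ for every $\ell$, Proposition~\ref{cone-concavity} (and the remark after it, allowing $D^2u$ to be replaced by $W_u$) applies verbatim and tells us that $G_0$ is elliptic and concave on the cone $\Gamma_{k-1}$. Thus, setting $Q=G_0$ and $\Gamma=\Gamma_{k-1}$, conditions \eqref{ellipticity} and \eqref{concavity} of Proposition~\ref{uniqueness} are verified.

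Next I would check the remaining hypotheses on the cone $\Gamma=\Gamma_{k-1}$: it is open by definition, it is symmetric under permutations of the coordinates (the $\sigma_j$ are symmetric functions), it is a convex cone (a standard fact about the G\r{a}rding cones $\Gamma_m$, going back to G\r{a}rding), and it is contained in $\Gamma_1=\{\lambda\,|\,\sum_j\lambda_j>0\}$ since $\sigma_1(\lambda)>0$ is one of the defining inequalities of $\Gamma_{k-1}$. The regularity hypothesis $Q\in C^{2,\gamma}(\Gamma)$ holds because $G_0$ is a rational function of $\lambda$ with nonvanishing denominator $\sigma_{k-1}$ on $\Gamma_{k-1}$, hence real-analytic there. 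Finally, the identity matrix $I_n$ satisfies $\lambda(I_n)=(1,\dots,1)\in\Gamma_{k-1}$, so $G_0(I_n)$ is well defined and the right-hand side of \eqref{equationQ} makes sense.

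With these verifications in place, the corollary follows directly: if $u$ is an admissible solution of \eqref{equationG0}, i.e. $\lambda(W_u)\in\Gamma_{k-1}$ and $G_0(u_{ij}+\delta_{ij}u)=G_0(I_n)$, then Proposition~\ref{uniqueness} applies with $Q=G_0$ and yields $u=1+\sum_{j=1}^{n+1}a_jx_j$ for some constants $a_j$. I expect no genuine obstacle here — the content is entirely in Proposition~\ref{cone-concavity} and Proposition~\ref{uniqueness}, and the only thing to be careful about is matching notation, namely confirming that $G_0$ is precisely the $\alpha_\ell\equiv 1$ specialization of $G$ so that the ellipticity and concavity are inherited, and recalling the standard fact that $\Gamma_{k-1}$ is an open, symmetric, convex cone lying inside $\Gamma_1$.
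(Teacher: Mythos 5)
Your proposal is correct and is exactly the route the paper takes: the corollary is obtained by applying Proposition \ref{uniqueness} with $Q=G_0$ and $\Gamma=\Gamma_{k-1}$, where the ellipticity and concavity come from Proposition \ref{cone-concavity} specialized to $\alpha_\ell\equiv 1$. Your extra verifications (openness, symmetry, convexity of $\Gamma_{k-1}$, smoothness of $G_0$, and $I_n$ being admissible) are details the paper leaves implicit, not a different argument.
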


\medskip

\medskip

Now, we are ready to prove the main theorem using the derived a priori estimates and the degree method. Here is our main theorem (Theorem \ref{existence proposition}).

\begin{proposition}
Assume $\alpha_\ell(x) \in C^{m, 1}(\mathbb S^n)$ with $m\geq 1$ and $0\leq \ell \leq k-2$ are positive functions and $\alpha(x)\in C^{m, 1}(\mathbb S^n)$. Suppose there is an automorphic group $\mathcal G$ of $\mathbb S^n$ that has no fixed points. If $\alpha(x), \alpha_l(x)$ are invariant under $\mathcal G$, i.e., $\alpha\left(g(x)\right) = \alpha(x)$ and $\alpha_{l}\left(g(x)\right) = \alpha_l(x)$ for all $g\in \mathcal G$ and $x\in \mathbb S^n$, then there exists a $\mathcal G$-invariant admissible solution $u\in C^{m+2, \gamma}, \forall \,0<\gamma<1$, such that $u$ satisfies equation \eqref{equ-s1}.
\par
Moreover, there is a constant $C$ depending only on $\inf_{\mathbb S^n} \alpha_\ell(x)$, $||\alpha_\ell ||_{C^{m, 1}(\mathbb S^n)}$ and $||\alpha||_{C^{m, 1}(\mathbb S^n)}$ such that
\[
|| u ||_{C^{m+2, \gamma}(\mathbb S^n)} \leq C.
\]
In particular, for any $\mathcal G$-invariant $\alpha\in C^{1, 1}(\mathbb S^n)$ and $0< \alpha_\ell(x) \in C^{1, 1}(\mathbb S^n)$ with $0\leq \ell\leq k-2$, equation \eqref{equ-s1} has a $(k-1)$-convex $\mathcal G$-invariant solution.
\end{proposition}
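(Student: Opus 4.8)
Proof proposal:

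The plan is to deduce existence from the a priori estimates (Proposition \ref{c2-non-deg} and Proposition \ref{c0-est}) together with a degree-theoretic argument carried out in the space of $\mathcal G$-invariant functions, following the scheme used by Guan-Guan for the prescribed Weingarten curvature problem. First I would set up the functional-analytic framework: let $\mathcal B$ be the Banach space of $\mathcal G$-invariant $C^{m+2,\gamma}(\mathbb S^n)$ functions satisfying the orthogonality normalization \eqref{orth}, and let $\mathcal O \subset \mathcal B$ be the open set of admissible functions $u$ with $W_u\in\Gamma_{k-1}$ and with the a priori bounds of the previous subsection strictly satisfied (i.e.\ with constants slightly larger than the ones produced there, so that no solution lies on $\partial\mathcal O$). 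The point of restricting to $\mathcal G$-invariant functions with no fixed points is that on such functions the only linear functions $\sum a_j x_j$ that are invariant are the zero function, which will make the relevant operator proper and the set $\mathcal O$ bounded; this is exactly where the group hypothesis is used and it is the same mechanism as in \cite{Guan-Guan}.

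Next I would define the homotopy. For $t\in[0,1]$ set
\[
\alpha^t(x) = t\,\alpha(x) + (1-t)\,\alpha(x_0),\qquad \alpha_\ell^t(x) = t\,\alpha_\ell(x) + (1-t)\,c_\ell
\]
for suitable positive constants $c_\ell$ and a base point, chosen so that at $t=0$ the equation reduces (after rescaling) to $G_0(W_u) = G_0(I_n)$ as in Corollary \ref{uniquenessG0}; more carefully one homotopes the coefficients to constants and simultaneously normalizes the right-hand side so that $u\equiv 1$ (equivalently, after subtracting its mean and imposing \eqref{orth}, $u\equiv 0$ shifted appropriately) is the unique solution at $t=0$. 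Each $\alpha^t,\alpha_\ell^t$ is $\mathcal G$-invariant, stays in $C^{m,1}$ with a uniform bound, and keeps $\alpha_\ell^t \geq$ a positive constant, so Proposition \ref{c2-non-deg} and Proposition \ref{c0-est} apply uniformly in $t$: there is a single constant $C$ such that any admissible $\mathcal G$-invariant solution of the $t$-equation satisfies $\|u\|_{C^{m+2,\gamma}}\le C$ (the $C^{2,\gamma}$ bound via Evans-Krylov, then Schauder bootstrapping, exactly as remarked after Proposition \ref{c2-non-deg}), and moreover stays uniformly inside $\Gamma_{k-1}$ away from its boundary — this last point because the a priori $C^2$ estimate bounds $\lambda_{\max}$ from above and hence, via the equation and the ellipticity of $G$, one gets a uniform lower bound for $\sigma_{k-1}$ and for the smallest eigenvalue. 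Hence the solution set of the whole homotopy lies in $\mathcal O$.

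Then I would invoke degree theory for the map $\mathcal F_t: \overline{\mathcal O}\to \mathcal B$ sending $u$ to (a Schauder-type realization of) $G(W_u)+\alpha^t$; since $G$ is elliptic and concave on $\Gamma_{k-1}$ by Proposition \ref{cone-concavity}, $\mathcal F_t$ is a compact perturbation of the identity (after composing with the inverse of a fixed invertible elliptic operator such as $\Delta + c$), the Leray-Schauder degree $\deg(\mathcal F_t,\mathcal O,0)$ is well-defined and, by the uniform estimates, independent of $t$. At $t=0$ the unique solution is given by Corollary \ref{uniquenessG0}, and its linearization is the invertible operator $L_0 v = G_0^{ij}(I_n)(v_{ij}+\delta_{ij}v) - (\text{trace term})$, which on the $\mathcal G$-invariant orthogonality-normalized space has trivial kernel (the kernel of the linearized Christoffel-type operator consists of linear functions, excluded by the group and the normalization); therefore $\deg(\mathcal F_0,\mathcal O,0)=\pm1\neq 0$. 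By homotopy invariance $\deg(\mathcal F_1,\mathcal O,0)\neq 0$, so there exists an admissible $\mathcal G$-invariant $u\in C^{m+2,\gamma}$ solving \eqref{equ-s1}, and the bound $\|u\|_{C^{m+2,\gamma}}\le C$ is the a priori estimate. The final assertion — that for $\mathcal G$-invariant $\alpha\in C^{1,1}$ and $0<\alpha_\ell\in C^{1,1}$ one gets a $(k-1)$-convex solution — is just the special case $m=1$ together with the observation that an admissible solution has $\lambda(W_u)\in\Gamma_{k-1}$, i.e.\ is $(k-1)$-convex by definition.

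The main obstacle I expect is the degree computation at the base point $t=0$: one must verify both the uniqueness of the solution there (handled by Corollary \ref{uniquenessG0}, but one must be careful that the homotopy of coefficients to constants can be arranged while keeping the right-hand side compatible, i.e.\ equal to $-\alpha^0$ with $\alpha^0$ a constant for which a constant solution exists) and the invertibility of the linearization modulo the kernel of linear functions — which is precisely the step where the fixed-point-free group action is essential, since it forces every $\mathcal G$-invariant element of that kernel to vanish. A secondary technical point is ensuring that $\mathcal O$ can be chosen open and bounded with no solutions on $\partial\mathcal O$ for all $t$ simultaneously; this follows from the uniformity in $t$ of the estimates in Propositions \ref{c2-non-deg} and \ref{c0-est}, but it should be stated carefully.
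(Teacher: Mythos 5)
Your proposal is correct and follows essentially the same route as the paper: uniform a priori estimates for a homotopy of the coefficients to constants, degree theory on the space of $\mathcal G$-invariant admissible functions, uniqueness of $u\equiv 1$ at $t=0$ via Corollary \ref{uniquenessG0} with the fixed-point-free action killing the linear functions, and a nonzero degree at $t=0$. The only cosmetic differences are that the paper invokes the degree theory for fully nonlinear second-order elliptic operators (Li, Nirenberg) rather than your Leray--Schauder reduction, and it computes the degree at $t=0$ to be $-1$ from the spectrum of the linearization $\nu(\Delta+n)$ rather than arguing invertibility on the invariant subspace.
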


\medskip

\begin{proof}
\par
For $0<\gamma<1, \,\ell\geq 0$ an integer, we set
\[
\mathcal A^{m, \gamma} = \{\  \phi \in C^{m, \gamma}(\mathbb S^n): \phi \text{ is } \mathcal{G}-\text{invariant}\ \}
\]
and
\[
\mathcal O_R= \{ \ w \text{ is (k-1)-convex}, \, w\in \mathcal A^{m, \gamma}: \|w\|_{C^{m, \gamma}(\mathbb S^n)} < R \ \}
\]
for any given large constant $R$.

\medskip

Now, for the fixed $\mathcal G$-invariant functions $\alpha\in C^{m, 1}(\mathbb S^n)$ and $0< \alpha_\ell(x) \in C^{m, 1}(\mathbb S^n)$ with $0\leq \ell\leq k-2$, and for $0\leq t\leq 1$, we consider operator
\begin{eqnarray}\label{deformation}
G_t (u) &=& t\, G(u) + (1-t)\, G_0(u) + t\,\alpha - (1-t) \,G_0(I_n)\\\nonumber
&= &\frac{\sigma_k}{\sigma_{k-1}}(W_u) - \sum_{\ell=0}^{k-2} \left(t\,\alpha_\ell +(1-t)\right) \frac{\sigma_\ell}{\sigma_{k-1}}(W_u) \\\nonumber
&&+ t\,\alpha + (1-t)\,G_0(I_n).
\end{eqnarray}
It is clear that $G_t$ is a nonlinear differential operator that maps $\mathcal A^{m+2, \gamma}$ into $\mathcal A^{m, \gamma}$ for $0<\gamma<1$. Moreover, if $R$ is sufficiently large, $G_t(u)=0$ has no solution on $\partial \mathcal O_R$ by the {\it a priori} estimates established in Proposition \ref{c2-non-deg}. Therefore, the degree of $G_t$ is well-defined (see \cite{Li89}, \cite{Nirenberg}). Using the homotopic invariance of the degree, we have
\[
\deg (G_0, \mathcal O_R, 0) = \deg (G_1, \mathcal O_R, 0).
\]

\par

Recall that any $\mathcal G$-invariant function is orthogonal to $span\{ x_1, \cdots, x_{n+1}\}$ (see Proposition 2.4 in \cite{Guan-Guan}). Therefore, at $t=0$, $u=1$ is the unique solution of \eqref{equationG0} in $\mathcal O_R$ by Corollary \ref{uniquenessG0}. On the other hand, we can compute the degree by the following formula
\begin{align*}
\deg (T_0, \mathcal O_R, 0) = \sum_{\mu_j>0} (-1)^{\beta_j},
\end{align*}
where $\mu_j$ are the eigenvalues of the linearized operator of $G_0$ and $\beta_j$ is its multiplicity.

\medskip

\par
Furthermore, since $G$ is symmetric, it is easy to show that the linearized operator of $G_0$ at $u=1$ is
\[
L_0 = \nu (\Delta + n)
\]
for some constant $\nu>0$. Because the eigenvalues of the Beltrami-Laplace operator $\Delta$ on $\mathbb S^n$ are strictly less than $-n$ except for the first two eigenvalues $0$ and $-n$, there is only one positive eigenvalue of $L_0$ with multiplicity 1, namely $\mu=n\nu$. Therefore,
\[
\deg (G_1, \mathcal O_R, 0) = \deg (G_0, \mathcal O_R, 0) =-1.
\]
It implies that there is an admissible solution of equation \eqref{equ-s1}. The regularity and estimates of the solution $u$ follow directly from Proposition \ref{c2-non-deg} and Schauder estimates.

\end{proof}

\

\subsection{Estimates for the degenerate case}$\\$

\par
In the previous two sub-sections, we derived the a priori estimates and obtained the existence of equation (\ref{equ-s1}) for the non-degenerate case. As it is noted in Proposition \ref{c2-non-deg}, the estimate (\ref{c2-est}) depends on $\inf_{\mathbb S^n} \sum_{\ell=0}^{k-2} \alpha_{\ell}(x)>0$. That is to say, the constants $c_0$ and $C_0$ may go to infinity as this infimum going to zero. Therefore, estimate (\ref{c2-est}) does not work if we consider the degenerate case for equation (\ref{equ-s1}) when only assuming that $\alpha_{\ell}(x)\geq 0$ for $0\leq \ell \leq k-2$. Indeed, we can refine the estimates in the proof of (\ref{c2-est}) and make the a priori $C^2$ estimate independent of $\inf_{\mathbb S^n} \sum_{\ell=0}^{k-2} \alpha_{\ell}(x)$. This allows us to obtain $C^{1, 1}$ solution of equation (\ref{equ-s1}) with non-negative $\alpha_\ell(x)$.

\medskip

\par
First, we consider a simple case
\begin{proposition}\label{c2-deg-sim}
Suppose $u\in C^4(\mathbb S^n)$ is an admissible solution of equation
\begin{eqnarray}\label{14k}
\sigma_k(W_u) + \alpha(x) \sigma_{k-1}(W_u) = f(x) \ \textit{ on } \mathbb S^n
\end{eqnarray}
with $\alpha(x) \in C^{1, 1}(\mathbb S^n)$ and $f(x) = \left( g(x) \right)^p$ for some $g(x) \in C^{1, 1}(\mathbb S^n)$ and $p \geq k-1$. Then, there exists a positive constant $C$ depending only on $n$, $\|\alpha(x)\|_{C^{1, 1}(S^n)}$ and $\|g(x)\|_{C^{1,1}(S^n)}$ such that
\begin{eqnarray}\label{c2-est-deg}
\left|\lambda_i (x) \right|\leq C \ \textit{ on } \mathbb S^n
\end{eqnarray}
where $\lambda_i(x)$ are the eigenvalues of $W_u(x) = u_{ij} + u\, \delta_{ij}$.
\end{proposition}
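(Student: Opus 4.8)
The plan is to mimic the structure of the proof of Proposition \ref{c2-non-deg}, but to replace the use of the non-degeneracy hypothesis $\alpha_m \geq c_0 > 0$ by a direct argument that exploits the special form $f = g^p$ with $p \geq k-1$. As in the non-degenerate case, I would set $H = \operatorname{tr}(W_u) = \Delta u + nu$, note that $\lambda_{\max} \leq H$ because $W_u \in \Gamma_{k-1} \subset \Gamma_1$, and derive an upper bound for $H$ at an interior maximum point $x_0$. Since equation \eqref{14k} is the special case of \eqref{equ-s1} in which only the $\sigma_{k-1}$ term survives on the left and the right side is a single prescribed function, the operator is $G_k(W_u) = \sigma_k/\sigma_{k-1}(W_u)$, which is concave in $\Gamma_{k-1}$ by Huisken–Sinestrari, and the only genuinely new term to control after differentiating twice is the one coming from $f = g^p$. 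So the computation \eqref{3k}–\eqref{8k} carries over almost verbatim, with $\sum_\ell \alpha_\ell G_\ell$ absent, and the key inequality to establish is a lower bound for a term of the form $-\,G_k^{-1}\big(\text{(second derivatives of }f/\sigma_{k-1})\big)$ which I expect to have the shape
\[
0 \;\geq\; G^{ij} H_{ij} \;\geq\; \tfrac{n-k+1}{k}\,H \;-\; C\,\frac{f}{\sigma_{k-1}}(x_0) \;-\; C,
\]
after using $\sum_i G^{ii} \geq \tfrac{n-k+1}{k}$ as in the earlier proof.

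The heart of the matter is then to bound $f/\sigma_{k-1}$ at $x_0$ in terms of a sublinear power of $H$. Here the hypothesis $p \geq k-1$ is exactly what is needed. I would argue: at $x_0$, from the equation, $\sigma_k + \alpha\,\sigma_{k-1} = f = g^p \geq 0$. If $\sigma_k(W_u(x_0)) \leq 0$ then $f/\sigma_{k-1} \leq |\alpha(x_0)|$ and we are done immediately. If $\sigma_k(W_u(x_0)) > 0$, split into the same two cases as in Step 1 of Proposition \ref{c2-non-deg}: if $\sigma_k/\sigma_{k-1} \leq C H^{1/k}$ then $f/\sigma_{k-1} \leq (\sigma_k/\sigma_{k-1}) + |\alpha| \leq C H^{1/k}$, which is sublinear; if instead $\sigma_k/\sigma_{k-1} \geq C H^{1/k}$, then write $f/\sigma_{k-1} = g^p/\sigma_{k-1}$ and use the Newton–Maclaurin chain $\sigma_{k-1}/\sigma_{k-2}\le\cdots$ together with the crude bound $g \leq C$ (since $g \in C^{1,1}$) and $\lambda_{\max} \leq H$ to show $f/\sigma_{k-1} = g^p (\sigma_{k-2}/\sigma_{k-1})\cdots \leq C H^{\text{(something)} - p/(k-1)} \le C$, using $p \geq k-1$ to make the net exponent $\le 0$. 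In either case $f/\sigma_{k-1}(x_0) \leq C H^{1-\varepsilon} + C$ with $\varepsilon > 0$, so feeding this into the displayed inequality gives $\tfrac{n-k+1}{k} H \leq C H^{1-\varepsilon} + C$, hence $H(x_0) \leq C$. This bounds $\lambda_{\max}$, and then $\lambda_1 \geq -(n-1)\lambda_{\max} \geq -(n-1)C$ as in Proposition \ref{c2-non-deg} gives \eqref{c2-est-deg}.

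The step I expect to be the main obstacle is making the double-differentiation estimate \emph{uniform}, i.e. independent of any positive lower bound on $f$: when $f$ can vanish, $\sigma_{k-1}$ may also be small, and the third-derivative terms $\sum_p G_k^{ij,rs} W_{ijp} W_{rsp}$ and the gradient terms from differentiating $f = g^p$ twice must be absorbed without dividing by a quantity that could degenerate. The device is the same one used implicitly in the degenerate Hessian literature (Ivochkina–Trudinger–Wang, and Krylov's reduction): because $p \geq k-1 = $ the "homogeneity gap", the function $f^{1/p} = g \in C^{1,1}$, and one completes the square in the gradient terms so that the leading bad term $G_k^{-1}(G_k^{ij}W_{ijp})^2$ is dominated by the concavity contribution $-G_k^{ij,rs}W_{ijp}W_{rsp}\,$; the residual is controlled by $\|g\|_{C^{1,1}}^2 \cdot (f/\sigma_{k-1})^{1-1/p}$ or similar, which is again sublinear in $H$ by the case analysis above. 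One should double-check the precise exponent $p \geq k-1$ against the homogeneity degrees: $\sigma_k/\sigma_{k-1}$ is degree $1$, $\sigma_{k-1}$ is degree $k-1$, so $f/\sigma_{k-1}$ being "degree $\le 1$" after the substitution forces $p \geq k-1$, matching the hypothesis. With the uniformity secured, the rest is routine.
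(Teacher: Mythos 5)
There is a genuine gap, and it sits exactly at the point your last paragraph flags as ``the main obstacle.'' After differentiating $f=g^p$ twice and completing the square using the concavity of $\left(-1/G_0\right)^{1/(k-1)}$ (here $\alpha_0=f$, $G_0=-1/\sigma_{k-1}$), the trouble term is of size $\bigl(|\Delta f|+|\nabla f|^2/f\bigr)/\sigma_{k-1}\sim \|g\|_{C^{1,1}}\,g^{\,p-1}/\sigma_{k-1}$, \emph{not} $f/\sigma_{k-1}=g^{p}/\sigma_{k-1}$. Your case analysis via the equation only controls $g^{p}/\sigma_{k-1}=\sigma_k/\sigma_{k-1}+\alpha$; passing from $g^{p}/\sigma_{k-1}$ to $g^{p-1}/\sigma_{k-1}$ costs a factor $g^{-1}$, and no positive lower bound on $g$ is available -- that is the whole point of the degenerate estimate. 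In the regime where $\sigma_k/\sigma_{k-1}$ is not large (your Case A and Case 1), both $g$ and $\sigma_{k-1}$ can be small simultaneously, so $g^{p-1}/\sigma_{k-1}$ can be arbitrarily large independently of $H$, and the inequality $0\geq \tfrac{n-k+1}{k}H-Cg^{p-1}/\sigma_{k-1}-C$ gives no bound on $H$. Relatedly, your invocation of $p\geq k-1$ in the case $\sigma_k/\sigma_{k-1}\geq CH^{1/k}$ is misplaced: there Newton--Maclaurin already gives $1/\sigma_{k-1}\leq C(\sigma_{k-1}/\sigma_k)^{k-1}\leq CH^{-(k-1)/k}$ and boundedness of $g$ suffices, with no role for $p$.

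The paper closes this regime by \emph{not} discarding the solution-dependent part of $\sum_i G^{ii}$, which is precisely what you throw away when you keep only $\sum_i G^{ii}\geq \tfrac{n-k+1}{k}$. One has $\sum_i G^{ii}=\tfrac{n-k+1}{k}+(n-k+2)\,f\,\sigma_{k-2}/\sigma_{k-1}^{2}$, so $H\sum_i G^{ii}$ contributes the extra positive term $C(n,k)\,g^{p}\,\sigma_1\sigma_{k-2}/\sigma_{k-1}^{2}$. In the bad regime $C_0\,g^{p-1}/\sigma_{k-1}\geq \delta H$ (equivalently $1/\sigma_{k-1}\geq C_2 g^{1-p}\sigma_1$), Newton--Maclaurin in the form $\sigma_{k-2}\geq \sigma_1^{1/(k-2)}\sigma_{k-1}^{(k-3)/(k-2)}$ shows this good term dominates $C_0\,g^{p-1}/\sigma_{k-1}$, the power comparison in $g$ working out exactly when $(1-p)/(k-2)\leq -1$, i.e.\ $p\geq k-1$ -- this is where the hypothesis is actually used. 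Without that term your argument cannot be completed, so the proposal as written does not prove the proposition.
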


\medskip

\par
\begin{proof} We note that the constants in the first inequality of (\ref{7k}) do not depend on $\inf_{\mathbb S^n}\alpha_\ell(x)$ for $0\leq \ell\leq k-2$. Thus, if we restrict to the case that $\alpha_\ell(x) \equiv 0$ for $1\leq \ell\leq k-2$ and $\alpha_0(x) = f(x)$ with $f$ as given above. We obtain
\begin{eqnarray}\label{15k}
 0\geq  G^{ij} H_{ij} = - \left( nk f - \Delta f + \frac{|\nabla f|^2}{f}\right) \frac{1}{\sigma_{k-1}} + H\sum_{i=1}^n G^{ii} - C(\Delta \alpha, \alpha)
\end{eqnarray}
By the assumption on $f(x)= \left( g(x) \right)^p$, we have
\[
\Delta f = \Delta \left( g(x) \right)^p = p \left( g(x) \right)^{p-1} \Delta g + p(p-1) \left( g(x) \right)^{p-2} | \nabla g|^2 \leq C \left( g(x) \right)^{p-1}.
\]
for some constant $C>0$ depend on $\|g\|_{C^{1, 1}(\mathbb S^n)}$ and independent of $\inf_{\mathbb S^n} g(x)$. Here, to obtain the inequality above, we used the fact that
\[
|\nabla g|^2 \leq C \| g\|_{C^{1, 1}}\cdot g
\]
 for non-negative $C^{1, 1}$ function $g(x)$. Moreover, we also have
\[
\frac{|\nabla f|^2}{f} = \frac{ \left( g(x) \right)^{2(p-2)}|\nabla g|^2}{\left( g(x) \right)^p} \leq C \left( g(x) \right)^{p-1}.
\]
Therefore, we obtain
\[
- \left( nk f - \Delta f + \frac{|\nabla f|^2}{f}\right) \geq -C_0  \left( g(x) \right)^{p-1}
\]
for some constants $C_0>0$ independent of $\inf_{\mathbb S^n} f(x)$.

\medskip

\par
On the other hand, by a similar computation as in previous section,
\[
\sum_{i=1}^n G^{ii} = \frac{n-k+1}{k} + (n-k+2)f \, \frac{\sigma_{k-2}}{\sigma_{k-1}^2} .
\]
Then, (\ref{15k}) implies
\begin{eqnarray}\label{16k}
0&\geq & \frac{n-k+1}{k}  H + C(n, k) \left( g(x) \right)^p \frac{ \sigma_1 \sigma_{k-2}}{\sigma_{k-1}^2} - C_0 \left( g(x) \right)^{p-1} \frac{1}{\sigma_{k-1}} -C
\end{eqnarray}
since $H$ is large. In order to get the the upper bound for $H$, we need to show that $\frac{n-k+1}{2k}H$ is the higher order term in the above inequality.

\par
Indeed, if $C_0\, g^{p-1} \frac{1}{\sigma_{k-1}} \leq \delta H$ for some small $\delta$, say $\delta =\frac{n-k+1}{4k}$, then we are done since this gives
\[
\frac{n-k+1}{4k} H - C_0 \, g^{p-1} \frac{1}{\sigma_{k-1}} \geq 0.
\]
And it follows from (\ref{16k}) that $H\leq  C_1$.
\medskip

\par
In the following, we will assume that $C_0\, g^{p-1} \frac{1}{\sigma_{k-1}} \geq \delta H$ for some $\delta>0$, i.e., assume that
\begin{eqnarray}\label{17k}
 \frac{1}{\sigma_{k-1}}  \geq C_2 g^{1-p} H,
\end{eqnarray}
for some constant $C_2>0$ independent of $\inf_{\mathbb S^n} g(x)$.
\par
Again, by Newton-Maclaurin inequality, it follows that
\[
\left( \frac{\sigma_{k-2}}{\sigma_{k-1}}\right)^{k-3} \geq \frac{\sigma_{k-3}}{\sigma_{k-2}} \cdots \frac{\sigma_{1}}{\sigma_{2}} = \frac{\sigma_{1}}{\sigma_{k-2}}
\]
and then
\begin{eqnarray}\label{18k}
\sigma_{k-2} \geq \sigma_{1}^{\frac{1}{k-2}}\sigma_{k-1}^{\frac{k-3}{k-2}}.
\end{eqnarray}
From this estimate and (\ref{17k}), we obtain
\begin{eqnarray}\label{19k}
\nonumber C(n, k)g^p \, \frac{\sigma_1 \,\sigma_{k-2}}{\sigma_{k-1}^2} - C_0 \,g^{p-1} \frac{1}{\sigma_{k-1}} &=& C(n, k)\, g^p \frac{1}{\sigma_{k-1}}\left( \frac{\sigma_1 \sigma_{k-2}}{\sigma_{k-1}}- \tilde C_1 g^{-1}\right)\\\nonumber
&\geq & C(n, k) g^p \frac{1}{\sigma_{k-1}}\left( \sigma_1 \sigma_1^{\frac{1}{k-2}}\sigma_{k-1}^{-\frac{1}{k-2}}- \tilde C_1 g^{-1}\right)\\\nonumber
&\geq & C(n, k) g^p \frac{1}{\sigma_{k-1}}\left( \sigma_1^{1+\frac{1}{k-2}}\left(C_2 g^{1-p}\sigma_1\right)^{\frac{1}{k-2}} - \tilde C_1 g^{-1}\right)\\
&\geq & C(n, k) g^p \frac{1}{\sigma_{k-1}}\left(\tilde C_2 \sigma_1^{1+\frac{2}{k-2}}  g^{\frac{1-p}{k-2}} - \tilde C_1 g^{-1}\right)
\end{eqnarray}
where $\tilde C_1$ and $\tilde C_2$ are constants depending on $C_0, C_2$ and $C(n, k)$.

\par
If $p\geq k-1$, we have $\frac{1-p}{k-2}\leq -1$, and thus the right hand side of (\ref{19k}) is positive. Plugging this back to (\ref{16k}), we get the desired estimate $H\leq C$. This proves the upper bound in (\ref{c2-est-deg}). The lower bound follows from the admissible condition.

\end{proof}

\bigskip

In the rest of this section, we extend the trick to general case and prove the $C^{1,1}$ estimate for the degenerate case of equation (\ref{equ-s1}).

\begin{proposition}\label{c2-deg-com}
Suppose $u\in C^4(\mathbb S^n)$ is an admissible solution of equation
\begin{eqnarray}\label{20k}
\sigma_k(W_u) + \alpha(x) \sigma_{k-1}(W_u) = \sum_{\ell=0}^{k-2} \alpha_\ell(x) \, \sigma_\ell (W_u) \ \textit{ on } \mathbb S^n
\end{eqnarray}
with $\alpha(x) \in C^{1, 1}(\mathbb S^n)$ and with $\alpha_\ell(x) = \left(g_\ell(x)\right)^{p_\ell}$ for some $0\leq g_\ell(x) \in C^{1, 1}(\mathbb S^n)$ and $p_\ell \geq k-\ell$. Then, there exists a positive constant $C$ depending on $n$, $\|\alpha(x)\|_{C^{1, 1}(S^n)}$ and $\|g_\ell(x)\|_{C^{1,1}(S^n)}$ such that
\begin{eqnarray}\label{c2-est-deg}
\left|\lambda_i (x) \right|\leq C \ \textit{ on } \mathbb S^n
\end{eqnarray}
where $\lambda_i(x)$ are the eigenvalues of $W_u(x) = u_{ij} + u\, \delta_{ij}$.
\end{proposition}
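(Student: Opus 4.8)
The plan is to follow the structure of the proof of Proposition~\ref{c2-deg-sim}, but now track each term $\alpha_\ell(x)=(g_\ell(x))^{p_\ell}$ separately and exploit the hypothesis $p_\ell\ge k-\ell$. As in the non-degenerate case, set $H=\operatorname{tr}(W_u)=\Delta u+nu$, assume its maximum is attained at $x_0$, diagonalize $u_{ij}(x_0)$, and use the commutator identity $H_{ii}=\Delta W_{ii}-nW_{ii}+H$. Differentiating the equation twice, using the concavity of $G_k=\sigma_k/\sigma_{k-1}$ in $\Gamma_{k-1}$ and the concavity of $(-1/G_\ell)^{1/(k-1-\ell)}$ to absorb the gradient terms $(\alpha_\ell)_pG_\ell^{ij}W_{ijp}$ exactly as in the derivation of \eqref{7k}, I would arrive at an inequality of the shape
\begin{eqnarray*}
0\geq G^{ij}H_{ij}\geq \tfrac{n-k+1}{k}H + (n-k+2)\sum_{\ell=0}^{k-2}\alpha_\ell\,\tfrac{\sigma_\ell\,\sigma_{k-2}}{\sigma_{k-1}^2}\,\big(\text{lower-order corrections}\big) - \sum_{\ell=0}^{k-2}\Big(n(k-\ell)\alpha_\ell+\Delta\alpha_\ell+\tfrac{|\nabla\alpha_\ell|^2}{\alpha_\ell}\Big)\tfrac{\sigma_\ell}{\sigma_{k-1}} - C.
\end{eqnarray*}
Here the crucial input is that for a non-negative $C^{1,1}$ function $g_\ell$ one has $|\nabla g_\ell|^2\le C\|g_\ell\|_{C^{1,1}}g_\ell$, so that with $\alpha_\ell=g_\ell^{p_\ell}$ both $\Delta\alpha_\ell$ and $|\nabla\alpha_\ell|^2/\alpha_\ell$ are bounded by $C\,g_\ell^{\,p_\ell-1}$, with $C$ independent of $\inf g_\ell$. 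Thus each bad term is controlled by $C\,g_\ell^{\,p_\ell-1}\,\sigma_\ell/\sigma_{k-1}$.

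Next I would compute $\sum_i G^{ii}$ in the degenerate case and show, via Newton--Maclaurin, that
\[
\sum_{i=1}^n G^{ii}\;\geq\;\tfrac{n-k+1}{k}\;+\;(n-k+2)\sum_{\ell=0}^{k-2}\alpha_\ell\,\tfrac{\sigma_{k-2}}{\sigma_{k-1}^2}\Big(\tfrac{\sigma_\ell}{\sigma_{k-1}}-c\,\tfrac{\sigma_{k-1}\sigma_{\ell-1}}{\sigma_\ell\sigma_{k-2}}\cdot(\cdots)\Big),
\]
i.e.\ the positive-definite contribution of $\sum G^{ii}$ again has a main term $\tfrac{n-k+1}{k}$ plus nonnegative pieces weighted by $\alpha_\ell\sigma_{k-2}/\sigma_{k-1}^2$ times a power of $\sigma_1$ bounded below. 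The heart of the argument is then a term-by-term dichotomy for each $\ell$: either $C\,g_\ell^{\,p_\ell-1}\,\sigma_\ell/\sigma_{k-1}\le \delta H/(k-1)$, in which case that term is dominated by a fraction of $\tfrac{n-k+1}{k}H$ and causes no harm; or $g_\ell^{\,p_\ell-1}/\sigma_{k-1}\gtrsim g_\ell^{\,p_\ell-1}\sigma_\ell/\sigma_{k-1}\gtrsim \delta H$, i.e.\ $1/\sigma_{k-1}\gtrsim C\,g_\ell^{\,1-p_\ell}H\cdot\sigma_{k-1}/\sigma_\ell$, which one combines with the corresponding positive term $\alpha_\ell\sigma_1\sigma_{k-2}/\sigma_{k-1}^2 = g_\ell^{p_\ell}\sigma_1\sigma_{k-2}/\sigma_{k-1}^2$ from $H\sum G^{ii}$. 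Using Newton--Maclaurin in the form $\sigma_{k-2}\ge \sigma_\ell^{1/(k-1-\ell)}\sigma_{k-1}^{(k-2-\ell)/(k-1-\ell)}$ (generalizing \eqref{18k}) together with the lower bound on $1/\sigma_{k-1}$ just obtained, the net contribution of the $\ell$-th pair becomes
\[
C\,g_\ell^{p_\ell}\tfrac{1}{\sigma_{k-1}}\Big(\tilde C_2\,\sigma_1^{\,a_\ell}\,g_\ell^{\frac{1-p_\ell}{k-1-\ell}} - \tilde C_1\Big)
\]
for suitable exponents $a_\ell>0$, and the condition $p_\ell\ge k-\ell$ is precisely what forces $\tfrac{1-p_\ell}{k-1-\ell}\le -1$ so that the first factor beats the second when $\sigma_1=H$ is large, making this pair contribute nonnegatively.

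Putting these together, after discarding all nonnegative terms one is left with $0\ge \tfrac{n-k+1}{2k}H - C$, giving the upper bound $H(x_0)\le C$ with $C$ independent of $\inf_{\mathbb S^n}g_\ell$; the lower bound on the eigenvalues then follows from $W_u\in\Gamma_{k-1}\subset\Gamma_1$ exactly as in Proposition~\ref{c2-non-deg}. The main obstacle I anticipate is bookkeeping: unlike the simple case of Proposition~\ref{c2-deg-sim}, where only $\alpha_0$ is present, here several $\sigma_\ell/\sigma_{k-1}$ ratios of different homogeneities interact, and one must verify that the dichotomy can be run \emph{simultaneously} for all $\ell$ — i.e.\ that controlling the ``large'' $\ell$'s by their paired positive terms does not interfere with handling the ``small'' ones. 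The cleanest route is probably to first bound the top index $m$ (the largest $\ell$ with $\alpha_\ell\not\equiv 0$) by the above argument, obtaining $g_m^{p_m}\sigma_m/\sigma_{k-1}\le CH^{t}$ with $t<1$, and then cascade down through the intermediate ratios $\sigma_\ell/\sigma_{\ell+1}$ using Newton--Maclaurin as in Step~2 of Proposition~\ref{c2-non-deg}, so that each lower-index term inherits a sub-linear-in-$H$ bound automatically; the only genuinely new point beyond Proposition~\ref{c2-deg-sim} is the exponent arithmetic showing $p_\ell\ge k-\ell$ suffices uniformly in $\ell$.
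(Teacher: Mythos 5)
Your proposal is correct and follows essentially the same route as the paper: the maximum principle applied to $H$, the concavity of $G_k$ and of $(-1/G_\ell)^{1/(k-1-\ell)}$, the bound $|\nabla g_\ell|^2\le C\|g_\ell\|_{C^{1,1}}g_\ell$ giving $|\Delta\alpha_\ell|+|\nabla\alpha_\ell|^2/\alpha_\ell\le C g_\ell^{p_\ell-1}$, the Newton--Maclaurin lower bound $\sum_i G^{ii}\ge \frac{n-k+1}{k}+\sum_\ell C(n,k,\ell)\,\alpha_\ell\,\sigma_\ell\sigma_{k-2}/\sigma_{k-1}^2$, and the per-index dichotomy (the paper's sets $\mathcal N$, $\mathcal N^c$) pairing each bad term with its weighted good term, where $p_\ell\ge k-\ell$ yields $\frac{1-p_\ell}{k-1-\ell}\le -1$ and hence nonnegativity of the pair for large $H$. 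The only blemishes are cosmetic (the intermediate inequality comparing $g_\ell^{p_\ell-1}/\sigma_{k-1}$ with $g_\ell^{p_\ell-1}\sigma_\ell/\sigma_{k-1}$ is unnecessary and unjustified as written, and the prefactor in your paired-term expression should be $g_\ell^{p_\ell}\sigma_\ell/\sigma_{k-1}$), neither of which affects the argument.
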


\begin{proof}
Similar as the proof of  (\ref{7k}) (also (\ref{15k})), we get
\begin{eqnarray}\label{21k}
0&\geq& G^{ij}H_{ij}\\\nonumber
&\geq & H\sum_i G^{ii} - \sum_{\ell=0}^{k-2} \left( n(k-\ell) \alpha_\ell - \Delta \alpha_\ell + \frac{|\nabla \alpha_\ell |^2 }{\alpha_\ell} \right) \frac{\sigma_\ell}{\sigma_{k-1}}-C(\Delta \alpha, \alpha)
\end{eqnarray}
First, direct computation implies
\begin{eqnarray*}\label{sumG}
\sum_{i=1}^n G^{ii} &=& \sum_{i=1}^n\left\{ \left( \frac{\sigma_k}{\sigma_{k-1}}\right)^{ii} - \sum_{\ell=0}^{k-2} \alpha_\ell \left( \frac{\sigma_\ell}{\sigma_{k-1}}\right)^{ii} \right\}\\
&=& \frac{n-k+1}{k} +  \sum_{\ell=0}^{k-2}\alpha_\ell \left\{ (n-k+2) \frac{\sigma_\ell\sigma_{k-2}}{\sigma_{k-1}^2} - (n-\ell+1) \frac{\sigma_{\ell-1}\sigma_{k-1}}{\sigma_{k-1}^2}\right\}
\end{eqnarray*}
Again, by standard Newton-Maclaurin inequality, for $1\leq \ell<k-1$,
\[
\frac{\sigma_{k-2}}{\sigma_{k-1}}\geq \frac{k-1}{\ell}\frac{n-\ell+1}{n-k+2} \frac{\sigma_{\ell-1}}{\sigma_{\ell}},
\]
from which follows that
\begin{eqnarray*}
\sum_{i=1}^n G^{ii} &\geq &  \frac{n-k+1}{k} +  \sum_{\ell=0}^{k-2}C(n, k, \ell)\, \alpha_\ell\, \frac{\sigma_\ell\sigma_{k-2}}{\sigma_{k-1}^2}.
\end{eqnarray*}
Then,
\begin{eqnarray}\label{22k}
0\geq G^{ij}H_{ij} &\geq&  \frac{n-k+1}{k} H + \sum_{\ell=0}^{k-2}C(n, k, \ell) \alpha_\ell\,\frac{\sigma_1 \sigma_\ell\sigma_{k-2}}{\sigma_{k-1}^2}\\\nonumber
&&- \sum_{\ell=0}^{k-2} \left( n(k-\ell) \alpha_\ell - \Delta \alpha_\ell + \frac{|\nabla \alpha_\ell |^2 }{\alpha_\ell} \right) \frac{\sigma_\ell}{\sigma_{k-1}}-C(\Delta \alpha, \alpha).
\end{eqnarray}
Recall that $\alpha_\ell(x) =  \left(g_\ell(x)\right)^{p_\ell}$ for some $0\leq g_\ell(x) \in C^{1, 1}$ with $\ell=0, 1, \cdots, k-2$. Using the same trick as the previous simple case, we can get
\[
- \left( n(k-\ell) \alpha_\ell - \Delta \alpha_\ell + \frac{|\nabla (\alpha_\ell)|^2}{\alpha_\ell}\right) \geq -C_\ell  \left( g_\ell(x) \right)^{p_\ell-1},
\]
where $C_\ell>0$ is a constant independent of $\inf_{\mathbb S^n} g_\ell(x)$ and $\ell=0, 1, \cdots, k-2$. Put this back to (\ref{22k}), we arrive
\begin{eqnarray}\label{23k}
0\geq G^{ij}H_{ij}
&\geq &  \frac{n-k+1}{k} H + \sum_{\ell=0}^{k-2}C(n, k, \ell) \Big(g_\ell(x)\Big)^{p_\ell} \frac{\sigma_1 \sigma_\ell\sigma_{k-2}}{\sigma_{k-1}^2}\\\nonumber
&&- \sum_{\ell=0}^{k-2} \tilde C_\ell  \Big(g_\ell(x)\Big)^{p_\ell-1} \frac{\sigma_\ell}{\sigma_{k-1}} - C(\Delta \alpha, \alpha).
\end{eqnarray}
Again, to get an upper bound for $H$, we need to show that $\frac{n-k+1}{k} H $ is the dominated term. For this purpose, we divide the indices $\ell=0, 1, \cdots, k-2$ into two groups:
\[
\mathcal N = \left\{ \ \ell \ \bigg| \ \tilde C_\ell\,  g_\ell^{p_\ell-1} \frac{\sigma_\ell}{\sigma_{k-1}} \leq \delta H, \ \text{ for some small constant } \delta, \text{ say }  \frac{n-k+1}{4k} \right\};
\]
and
\[
\mathcal N^c = \left\{ \ \ell \ \bigg|\  \tilde C_\ell\,  g_\ell^{p_\ell-1} \frac{\sigma_\ell}{\sigma_{k-1}} \geq \delta H, \ \text{ for some small constant } \delta \right\}.
\]
It is easy to see that, for those $\ell\in \mathcal N$, we are fine since, for $\ell\in \mathcal N$, the trouble term  $\tilde C_\ell \, g_\ell^{p_\ell-1} \frac{\sigma_\ell}{\sigma_{k-1}} $ could be dominated by $\frac{n-k+1}{4k} H$.
\medskip
\par
Now, we deal with the terms with $\ell\in \mathcal N^c$. By the definition of $\ell \in \mathcal N^c$, we have
\[
\frac{\sigma_\ell}{\sigma_{k-1}} \geq \delta \tilde C_\ell^{-1} \sigma_1 g_\ell ^{1-p_\ell}.
\]
Using the same trick as (\ref{18k}), we obtain
\[
\frac{\sigma_{k-2}}{\sigma_{k-1}} \geq \left( \frac{\sigma_{\ell}}{\sigma_{k-1}}\right)^{\frac{1}{k-1-\ell}} \geq \bar C_\ell\, g_\ell ^{\frac{1-p_\ell}{k-1-\ell}} \sigma_1^{\frac{1}{k-1-\ell}}.
\]
Thus, for $\ell\in \mathcal N^c$,
\begin{eqnarray*}
C(n, k, \ell)\, g_\ell^{p_\ell}\, \frac{\sigma_1 \sigma_\ell \sigma_{k-2}}{\sigma_{k-1}^2} - \tilde C_\ell \,g_\ell^{p_\ell-1}\frac{\sigma_\ell}{\sigma_{k-1}}& =& C(n, k, \ell) \,g_\ell^{p_\ell}\, \frac{\sigma_\ell}{\sigma_{k-1}} \left( \sigma_1 \frac{\sigma_{k-2}}{\sigma_{k-1}} - \hat C_\ell g_\ell^{-1} \right)\\
&\geq & C(n, k, \ell) g_\ell^{p_\ell} \frac{\sigma_\ell}{\sigma_{k-1}} \left( \bar C_\ell \sigma_1^{1+\frac{1}{k-1-\ell}} g_\ell^{\frac{1-p_\ell}{k-1-\ell}} -  \hat C_\ell g_\ell^{-1} \right)
\end{eqnarray*}

\par
If $p_\ell\geq k-\ell$, we have $\frac{1-p_\ell}{k-1-\ell}\leq -1$ and then the last term in the above inequalities is non-negative.
\par
Finally, by (\ref{23k}), we get
\begin{eqnarray*}
0\geq G^{ij}H_{ij}&\geq&  \frac{n-k+1}{k} H + \sum_{\ell\in \mathcal N}\left( C(n, k, \ell) g_\ell^{p_\ell} \frac{\sigma_1 \sigma_\ell\sigma_{k-2}}{\sigma_{k-1}^2}- \tilde C_\ell  g_\ell^{p_\ell-1} \frac{\sigma_\ell}{\sigma_{k-1}}\right) \\
&&+  \sum_{\ell\in \mathcal N^c}\left( C(n, k, \ell) g_\ell^{p_\ell} \frac{\sigma_1 \sigma_\ell\sigma_{k-2}}{\sigma_{k-1}^2}- \tilde C_\ell  g_\ell^{p_\ell-1} \frac{\sigma_\ell}{\sigma_{k-1}}\right) - C(\Delta \alpha, \alpha)\\
&\geq& \frac{n-k+1}{2k} H - C(\Delta \alpha, \alpha)
\end{eqnarray*}
which gives the upper bound $H\leq C$.

\end{proof}

\medskip

As mentioned in the introduction, an interesting PDE problem left open is to find the optimal power $p_{\ell}$ in the requirement of $\|\alpha_\ell^{1/ p_\ell}\|\in C^{1, 1}$ to obtain the above $C^{1, 1}$ estimate for the degenerate fully nonlinear equations. The estimates obtained in Propositions \ref{c2-deg-sim} and \ref{c2-deg-com} can be viewed as studying those problems on compact manifolds case. In general, this would be simpler than the domain case since one do not need to handle the $C^{1, 1}$ estimates on the boundary of the domain.

\par
In the simple case, Proposition \ref{c2-deg-sim} derived the estimates under the condition that $f^{1/(k-1)}\in C^{1, 1}(\mathbb S^n)$. In viewing of the examples given in \cite{DPZ} on compact manifolds, the power $1/(k-1)$ is optimal.
\par
In Proposition \ref{c2-deg-com}, we assume that $p_\ell \geq k-\ell$ for $\ell=0, 1, \cdots, k-2$. Indeed, by using the same trick as the previous simple case, we can weak the condition for $p_0$ to be $p_0 \geq k-1$. But it is still an interesting question that whether the estimate still holds if one assumes $p_\ell \geq k-\ell-1$ for all $\ell =1, \cdots, k-2$.

\

\section{Dirichlet Problem}
In this section, we consider the following Dirichlet problem
\begin{equation}\label{DP}
\begin{cases}
\sigma_k(D^2 u) + \alpha(x) \sigma_{k-1}(D^2 u) = \sum_{\ell=0}^{k-2} \alpha_\ell(x) \sigma_\ell(D^2 u) & \text{ in } \Omega\\
u = \phi & \text{ on } \partial \Omega.
\end{cases}
\end{equation}
where $\Omega\in \mathbb R^n$ is a bounded domain with smooth boundary $\partial\Omega$, $\phi \in C^{\infty}(\partial\Omega)$ and $D^2 u$ is the Hessian matrix of the function $u$.

\medskip

\subsection{Interior $C^{1, 1}$ estimate}$\\$

\noindent
We will derive the interior $C^{1, 1}$ estimates for the solution of the {\it degenerate} equation (\ref{DP}) under the following assumptions
\begin{itemize}
\item $\alpha_\ell(x) = \left(g_\ell(x)\right)^{p_\ell}$ for some $0\leq g_\ell(x) \in C^{1, 1}$ and $p_\ell \geq k-\ell$;
\item  $\alpha(x) \in C^{1, 1}$.
\end{itemize}

 \begin{proposition}
 Let $u\in C^4(\Omega) \cap C^2(\bar\Omega)$ be an admissible solution, that is $D^2u\in \Gamma_{k-1}$, of equation (\ref{DP}) in $\Omega$. Then
 \begin{eqnarray}\label{c2-Dir-int}
  \sup_{\Omega} | D^2 u| \leq C \big(1+ \sup_{\partial\Omega} |D^2 u|\big),
   \end{eqnarray}
   where $C$ is a constant depending on $\|g_\ell\|_{C^{1, 1}(\Omega)}$ and $\|\alpha(x) \|_{C^{1, 1}(\Omega)}$, but independent of $\inf_{\Omega} g_\ell(x)$ with $0\leq \ell\leq k-2$.
 \end{proposition}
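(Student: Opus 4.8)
The plan is to carry the argument of Proposition \ref{c2-deg-com} over to the Euclidean setting, the essential new point being that the coercive zeroth--order term $H\sum_i G^{ii}$ — which on $\mathbb S^n$ was produced for free by the curvature commutator $H_{ii}=\Delta W_{ii}-nW_{ii}+H$ — must now be built into the test function by hand. Thus, for a unit vector $\xi$ I would work with a Pogorelov--type test function, e.g. $W(x,\xi)=u_{\xi\xi}(x)+\tfrac{\beta}{2}|Du(x)|^{2}$ (or the logarithmic variant $\log u_{\xi\xi}+\tfrac{\beta}{2}|Du|^{2}$) with $\beta$ a large constant, and maximize $W$ over $\bar\Omega\times S^{n-1}$; no spatial cut--off is used, since the conclusion is to be a bound valid up to $\partial\Omega$. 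If the maximum is attained on $\partial\Omega$, then $W\le\sup_{\partial\Omega}|D^{2}u|+C$, and with the available gradient bound there is nothing more to do; so assume the maximum occurs at an interior point $x_{0}$. The non--smoothness of $\lambda_{\max}$ is handled by the maximization over $\xi$ together with the standard perturbation device, and after a rotation one takes $\xi=e_{1}$ and $D^{2}u(x_{0})$ diagonal with $\lambda_{1}=\lambda_{\max}(x_{0})$.

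At $x_{0}$ the matrix $(W_{ij})$ is non--positive, while $(G^{ij})$, the linearization of $G_{k}+\sum_{\ell}\alpha_{\ell}G_{\ell}$, is positive definite and diagonal by Proposition \ref{cone-concavity}; contracting gives $0\ge G^{ij}W_{ij}$, whose right--hand side contains $G^{ij}u_{ij11}$, the coercive piece $\beta\,G^{ij}u_{ik}u_{jk}=\beta\sum_{i}G^{ii}\lambda_{i}^{2}$, and a controllable gradient term $\beta u_{k}G^{ij}u_{ijk}$. Differentiating the equation $G_{k}(D^{2}u)+\sum_{\ell}\alpha_{\ell}(x)G_{\ell}(D^{2}u)=-\alpha(x)$ once and twice in $e_{1}$, discarding the second--order term of $G_{k}$ by concavity of $\sigma_{k}/\sigma_{k-1}$, applying inequality (\ref{concavity}) and completing the square for each $G_{\ell}$ exactly as between (\ref{6k}) and (\ref{7k}), and using $|Dg_{\ell}|^{2}\le C\|g_{\ell}\|_{C^{1,1}}g_{\ell}$ together with $\alpha_{\ell}=g_{\ell}^{\,p_{\ell}}$ to replace every first and second derivative of $\alpha_{\ell}$ by a constant multiple of $g_{\ell}^{\,p_{\ell}-1}$ (all constants independent of $\inf_{\Omega}g_{\ell}$), one is led to an inequality of the form
\[
0\ \ge\ c\,\lambda_{\max}(x_{0})\ +\ \sum_{\ell=0}^{k-2} C(n,k,\ell)\,g_{\ell}^{\,p_{\ell}}\,\frac{\sigma_{1}\sigma_{\ell}\sigma_{k-2}}{\sigma_{k-1}^{2}}\ -\ \sum_{\ell=0}^{k-2}\widetilde C_{\ell}\,g_{\ell}^{\,p_{\ell}-1}\,\frac{\sigma_{\ell}}{\sigma_{k-1}}\ -\ C,
\]
the two positive terms coming from $\beta\sum_{i}G^{ii}\lambda_{i}^{2}$ via the algebraic identity $\sum_{i}G^{ii}=\tfrac{n-k+1}{k}+\sum_{\ell}C(n,k,\ell)\alpha_{\ell}\sigma_{\ell}\sigma_{k-2}/\sigma_{k-1}^{2}$ (the same Newton--MacLaurin computation as on $\mathbb S^{n}$) together with the lower bound for $\sum_{i}G^{ii}\lambda_{i}^{2}$ reflecting that $\lambda_{1}$ is large.

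From here the argument is identical to Proposition \ref{c2-deg-com}: split $\{0,\dots,k-2\}$ into $\mathcal N=\{\ell:\ \widetilde C_{\ell}g_{\ell}^{p_{\ell}-1}\sigma_{\ell}/\sigma_{k-1}\le\delta\,\lambda_{\max}\}$, whose terms are absorbed by $c\,\lambda_{\max}$ for $\delta$ small, and its complement $\mathcal N^{c}$, on which $\sigma_{\ell}/\sigma_{k-1}\ge c'\,\sigma_{1}\,g_{\ell}^{1-p_{\ell}}$; for $\ell\in\mathcal N^{c}$ one uses the Newton--MacLaurin inequality $\sigma_{k-2}/\sigma_{k-1}\ge(\sigma_{\ell}/\sigma_{k-1})^{1/(k-1-\ell)}$ and the hypothesis $p_{\ell}\ge k-\ell$ (so that $\tfrac{1-p_{\ell}}{k-1-\ell}\le-1$), exactly as in (\ref{18k})--(\ref{23k}), to conclude that $g_{\ell}^{p_{\ell}}\sigma_{1}\sigma_{\ell}\sigma_{k-2}/\sigma_{k-1}^{2}$ dominates the trouble term $g_{\ell}^{p_{\ell}-1}\sigma_{\ell}/\sigma_{k-1}$. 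What survives is $0\ge\tfrac{c}{2}\lambda_{\max}(x_{0})-C$, so $\lambda_{\max}(x_{0})\le C$; since $D^{2}u\in\Gamma_{k-1}\subset\Gamma_{1}$ this also bounds $|D^{2}u(x_{0})|$, and hence $W\le C(1+\sup_{\partial\Omega}|D^{2}u|)$ throughout $\Omega$, which is (\ref{c2-Dir-int}). The main obstacle — and the only genuinely new difficulty compared with the sphere case — is to design the test function so that the hand--built coercive term really dominates both in the $\mathcal N^{c}$ estimate and in the third--order contributions coming from $G^{ij}u_{ij11}$, while keeping every constant independent of $\inf_{\Omega}g_{\ell}$; this is where one may instead need to retain the $\sigma_{k}/\sigma_{k-1}$ second--order term and exploit the Pogorelov cancellation between it and the $\log u_{\xi\xi}$ contribution.
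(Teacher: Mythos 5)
Your overall strategy (differentiate the equation twice, drop the $G_k$ second--order term by concavity, use inequality (\ref{concavity}) and the completion of squares to reduce all $\alpha_\ell$--derivatives to $C_\ell g_\ell^{p_\ell-1}$, then run the $\mathcal N$/$\mathcal N^c$ dichotomy with Newton--MacLaurin and $p_\ell\ge k-\ell$) is exactly the right skeleton, but the proposal has a genuine gap at its center, which you yourself flag as ``the main obstacle.'' The centerpiece inequality $0\ge c\,\lambda_{\max}+\sum_\ell C(n,k,\ell)g_\ell^{p_\ell}\sigma_1\sigma_\ell\sigma_{k-2}/\sigma_{k-1}^2-\sum_\ell \widetilde C_\ell g_\ell^{p_\ell-1}\sigma_\ell/\sigma_{k-1}-C$ is not justified by the Pogorelov term: $\beta\sum_i G^{ii}\lambda_i^2$ does not control either $\lambda_{\max}$ or $\sigma_1\sum_i G^{ii}$ in general, because $G^{ii}$ in the direction of the largest eigenvalue can be arbitrarily small for quotient--type operators, and the eigenvalues carrying large $G^{ii}$ may be small; turning $\sum_i G^{ii}\lambda_i^2$ into a usable coercive quantity requires a further dichotomy on the eigenvalues (or the $\log u_{\xi\xi}$ cancellation you allude to) that you have not carried out. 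In addition, the gradient term $\beta u_kG^{ij}u_{ijk}$, after substituting the once--differentiated equation, produces $\beta|Du|\,|D\alpha_\ell|\,\sigma_\ell/\sigma_{k-1}$--type contributions, so your argument needs a bound on $\sup_\Omega|Du|$ which is neither among the stated dependencies of the constant in (\ref{c2-Dir-int}) nor available at this stage.

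The paper avoids all of this: it takes the much simpler test function $\omega=\Delta u+\tfrac{M}{2}|x|^2$ (no gradient term, no maximization over directions, and $\Delta u$ controls $|D^2u|$ since $D^2u\in\Gamma_1$). At an interior maximum the quadratic part contributes the coercive term $M\sum_iG^{ii}$, and the same Newton--MacLaurin computation as on the sphere gives $\sum_iG^{ii}\ge \tfrac{n-k+1}{k}+\sum_\ell C(n,k,\ell)\,\alpha_\ell\,\sigma_\ell\sigma_{k-2}/\sigma_{k-1}^2$, so $M$ times this already supplies both good terms you were trying to manufacture: the constant part absorbs the indices with $\widetilde C_\ell g_\ell^{p_\ell-1}\sigma_\ell/\sigma_{k-1}\le N$ (the splitting is by a fixed constant $N$, not by $\delta\lambda_{\max}$ as on the sphere, since there is no free $H\sum G^{ii}$ term here), while for the remaining indices $\sigma_{k-2}/\sigma_{k-1}\ge(\sigma_\ell/\sigma_{k-1})^{1/(k-1-\ell)}\ge \bar C_\ell g_\ell^{(1-p_\ell)/(k-1-\ell)}\ge \bar C_\ell g_\ell^{-1}$ (using $p_\ell\ge k-\ell$, no $\sigma_1$ factor needed) makes the $M\alpha_\ell\sigma_\ell\sigma_{k-2}/\sigma_{k-1}^2$ term dominate once $M$ is large, exactly as in (\ref{estimate2}). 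Choosing $M$ large then yields a contradiction, so the maximum of $\omega$ sits on $\partial\Omega$ and (\ref{c2-Dir-int}) follows, with constants manifestly independent of $\inf_\Omega g_\ell$ and with no need for a $C^1$ estimate. If you wish to salvage your version, you must either prove a genuine lower bound for $\beta\sum_iG^{ii}\lambda_i^2$ of the required strength or, better, replace the Pogorelov device by the $\tfrac{M}{2}|x|^2$ term, whose linearization is what actually produces the needed positivity.
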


\begin{proof}
Consider $\omega(x) = \Delta u(x) + \frac{1}{2}M|x|^2$ for some large constant $M$ to be determined later. Suppose that $\omega$ attains its maximum at $x_0$. If $x_0\in \partial\Omega$, we are done. So to prove the estimate, we assume $x_0\in \Omega$. By rotating the coordinates, we may suppose that $D^2 u$ is diagonal at $x_0$. Then, at $x_0$, we have
\begin{eqnarray*}
0= \omega_i(x_0) = \sum_{k=1}^n u_{kki} + Mx_i;
 \end{eqnarray*}
and
\begin{eqnarray*}
0 \geq \omega_{ij}(x_0) = \sum_{k=1}^n u_{kkij} +M\, \delta_{ij}.
\end{eqnarray*}
This implies, at $x_0$,
\begin{eqnarray*}
0 \geq G^{ij} \omega_{ij} = G^{ij} \Delta(u_{ij}) + M \sum_{i, j} G^{ij}
\end{eqnarray*}
By differentiate equation (\ref{DP}) twice, we can compute
\begin{eqnarray*}
-\Delta \alpha &=& \sum_{p=1}^nG_k^{ij, rs} u_{ijp}u_{rsp} + G_k^{ij}  \Delta u_{ij}
 +\sum_{p=1}^n\sum_{\ell=0}^{k-2} \alpha_\ell G_{\ell}^{ij, rs} u_{ijp}u_{rsp}+ \sum_{\ell=0}^{k-2} \alpha_\ell G_\ell^{ij}\Delta u_{ij} \\
 &&+ 2 \sum_{p=1}^n\sum_{\ell=0}^{k-2} (\alpha_\ell)_p G_\ell^{ij} u_{ijp} + \sum_{\ell=0}^{k-2} \Delta(\alpha_\ell) G_\ell
\end{eqnarray*}
It follows that
\begin{eqnarray*}
G^{ij} \Delta u_{ij} &=& - \sum_{p=1}^nG_k^{ij, rs} u_{ijp}u_{rsp}
- \sum_{p=1}^n\sum_{\ell=0}^{k-2} \alpha_\ell G_\ell^{ij, rs} u_{ijp}u_{rsp} - 2\sum_{p=1}^n\sum_{\ell=0}^{k-2} (\alpha_\ell)_p G_\ell^{ij} u_{ijp} \\\nonumber
&&- \sum_{\ell=0}^{k-2} \Delta(\alpha_\ell) G_\ell -\Delta \alpha.
\end{eqnarray*}
Then, following the same trick as estimating (\ref{est-concave2}), we can make use of the concavity of operator $G_k$ and $\left(- \frac{1}{G_l}\right)^{\frac{1}{k-1-l}}$ for $l=1, \cdots, k-2$ and obtain
\begin{eqnarray}\label{estimate1}
0&\geq& G^{ij}\omega_{ij}  \geq  \sum_{\ell=0}^{k-2} \left(- \Delta\alpha_\ell + \frac{|\nabla\alpha_\ell |^2}{\alpha_\ell}\right) G_\ell + M\sum_{i=1}^n G^{ii} -\Delta \alpha.
\end{eqnarray}
On the other hand, we can compute
\begin{eqnarray*}
\sum_{i=1}^n G^{ii} &\geq &  \frac{n-k+1}{k} +  \sum_{\ell=0}^{k-2}C(n, k, \ell) \alpha_\ell \frac{\sigma_\ell\sigma_{k-2}}{\sigma_{k-1}^2}.
\end{eqnarray*}

\medskip

Now, using the assumption that $\alpha_\ell(x) =  \left(g_\ell(x)\right)^{p_\ell}$ for some $0\leq g_\ell(x) \in C^{1, 1}$ for $ 0\leq \ell \leq k-2$, we have
\[
 \Delta \alpha_\ell - \frac{|\nabla \alpha_\ell|^2}{\alpha_\ell}  \geq -C_\ell  \left( g_\ell(x) \right)^{p_\ell-1},
\]
where $C_\ell>0$ is a constant independent of $\inf g_\ell(x)$. Putting this back to (\ref{estimate1}), we arrive
\begin{eqnarray}\label{estimate2}
\nonumber 0&\geq& G^{ij}\omega_{ij}\\
&\geq &  \frac{n-k+1}{k} M + M\sum_{\ell=0}^{k-2}C(n, k, \ell) \Big(g_\ell(x)\Big)^{p_\ell}\, \frac{ \sigma_\ell\, \sigma_{k-2}}{\sigma_{k-1}^2} - \sum_{\ell=0}^{k-2} \tilde C_\ell  \Big(g_\ell(x)\Big)^{p_\ell-1} \frac{\sigma_\ell}{\sigma_{k-1}} - \Delta \alpha.
\end{eqnarray}
We note that, if there is a constant $N$ such that
\[
\tilde C_\ell \, g_\ell^{p_\ell-1} \frac{\sigma_\ell}{\sigma_{k-1}} \leq N, \ \text{ for } \forall \, \ell=0, \cdots, k-2,
\]
then we obtain the desired estimate since inequality (\ref{estimate2}) becomes
\begin{eqnarray*}
0 \geq  \frac{n-k+1}{k} M - (k-1)N - \Delta \alpha.
\end{eqnarray*}
If we take $M$ large enough, this gives contradiction. Therefore, the maximum point of $\omega(x)$ can not be in the interior of $\Omega$ and we get the estimate (\ref{c2-Dir-int}). Based on this observation, we divide the indices into two groups, for some constant $N$,
\[
\mathcal N = \{ \ \ell \ | \ \tilde C_\ell  g_\ell^{p_\ell-1} \frac{\sigma_\ell}{\sigma_{k-1}} \leq N\}, \ \  \textit{ and } \ \  \mathcal N^c = \{\  \ell\  | \ \tilde C_\ell g_\ell^{p_\ell-1} \frac{\sigma_\ell}{\sigma_{k-1}}> N\}
\]
\par
For those $\ell\in \mathcal N$, it is easy to handle since
\[
\tilde C_\ell  \Big(g_\ell(x)\Big)^{p_\ell-1} \frac{\sigma_\ell}{\sigma_{k-1}}\leq \frac{n-k+1}{k} M \]
if we take $M$ large enough.

\par
Next, we deal with terms that $\ell\in \mathcal N^c$. By the definition of $\mathcal N^c$, we have
\[
\frac{\sigma_\ell}{\sigma_{k-1}} > N\tilde C_\ell^{-1}  g_\ell^{1-p_\ell}
\]
Using the same trick as (\ref{18k}), we obtain
\[
\frac{\sigma_{k-2}}{\sigma_{k-1}} \geq \left( \frac{\sigma_{\ell}}{\sigma_{k-1}}\right)^{\frac{1}{k-1-\ell}} \geq \bar C_\ell g_\ell ^{\frac{1-p_\ell}{k-1-\ell}}.
\]
Then, for $\ell\in \mathcal N^c$,
\begin{eqnarray*}
 C(n, k, \ell)\,M g_\ell^{p_\ell} \frac{ \sigma_\ell \sigma_{k-2}}{\sigma_{k-1}^2} - \tilde C_\ell g_\ell^{p_\ell-1}\frac{\sigma_\ell}{\sigma_{k-1}}&=&  \tilde C_\ell g_\ell^{p_\ell} \frac{\sigma_\ell}{\sigma_{k-1}} \left(\hat C_\ell M \frac{\sigma_{k-2}}{\sigma_{k-1}} - g_\ell^{-1} \right)\\
&\geq & \tilde C_\ell g_\ell^{p_\ell} \frac{\sigma_\ell}{\sigma_{k-1}} \left( \bar C_\ell \hat C_\ell M  g_\ell^{\frac{1-p_\ell}{k-1-\ell}} -   g_\ell^{-1} \right)
\end{eqnarray*}
If $p_\ell\geq k-\ell$, we have $\frac{1-p_\ell}{k-1-\ell}\leq -1$ and then the last term in the above inequalities is non-negative if we take $M$ large enough. This again gives contradiction in (\ref{estimate2}).

\end{proof}

\medskip

\subsection{Boundary $C^2$ estimate}$\\$

\par
We follow the idea in \cite{GuanB1, GuanB2} to derive the boundary $C^2$ estimate under the assumption on sub-solutions.

\begin{theorem}
Let $u\in C^4(\Omega) \cap C^2(\bar\Omega)$ be an admissible solution of the Dirichlet problem (\ref{DP}) with $\alpha(x)\in C^{1, 1}(\bar\Omega)$ and $\alpha_\ell(x)>0\in C^{1, 1}(\bar\Omega)$.
Assume that there exists an admissible subsolution $\underline u \in C^2(\bar \Omega)$, that is
\begin{equation}\label{subsolution}
\begin{cases}
\sigma_k(D^2 \underline u) + \alpha(x) \sigma_{k-1}(D^2 \underline u) \geq \sum_{\ell=0}^{k-2} \alpha_\ell(x) \sigma_\ell(D^2 \underline u) & \text{ in } \Omega\\
\underline u = \phi & \text{ on } \partial \Omega.
\end{cases}
\end{equation}
Then, there exists a constant $C$ depending on $\|u\|_{C^1(\bar\Omega)}$, $\|\underline u\|_{C^2(\bar\Omega)}$, $\|\phi\|_{C^3}$ and $\inf_{\partial\Omega} \alpha_\ell(x)$ with $0\leq \ell \leq k-2$ such that
\begin{eqnarray}\label{boundaryestimate}
\max_{\partial\Omega} |D^2 u|\leq C.
\end{eqnarray}
\end{theorem}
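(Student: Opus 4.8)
The plan is to adapt the standard subsolution-based barrier method for boundary second-order estimates of concave elliptic equations (as in \cite{GuanB1, GuanB2}), using that by Proposition \ref{cone-concavity} the operator $G$ of (\ref{G-op}) is elliptic and concave in $\Gamma_{k-1}$. Fix $x_0\in\partial\Omega$ and choose coordinates so that $x_0=0$, the inner unit normal of $\partial\Omega$ at $0$ is $e_n$, and near $0$ the boundary is the graph $x_n=\rho(x')$ with $\rho(0)=0$ and $D\rho(0)=0$; all estimates will be carried out in a half-ball $\Omega_\delta=\Omega\cap B_\delta(0)$ for a fixed small $\delta$. The tangential-tangential second derivatives at $x_0$ need no work: differentiating $u=\phi$ on $\partial\Omega$ twice in directions tangent to $\partial\Omega$ gives, for $\alpha,\beta<n$, $u_{\alpha\beta}(0)=\phi_{\alpha\beta}(0)-u_n(0)\rho_{\alpha\beta}(0)$, hence $|u_{\alpha\beta}(0)|\le C(\|\phi\|_{C^2},\|u\|_{C^1},\partial\Omega)$.

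For the mixed derivatives $u_{\alpha n}(0)$, $\alpha<n$, I would argue by a barrier. Let $L=G^{ij}\partial_{ij}$ be the linearization of $G$ at $u$, which is elliptic by Proposition \ref{cone-concavity}, and let $T$ be a first-order operator tangent to $\partial\Omega$ at $0$ to second order, built from $\partial_\alpha$ together with the rotation fields $x_\beta\partial_n-x_n\partial_\beta$. Differentiating equation (\ref{DP}) once and twice and using the concavity of $G_k=\sigma_k/\sigma_{k-1}$ and of $(-1/G_\ell)^{1/(k-1-\ell)}$ --- the same inequalities (\ref{concavity})--(\ref{est-concave2}) exploited in Section 3, together with $\alpha_\ell\in C^{1,1}$ and $\alpha_\ell>0$ --- one gets $|L(T(u-\underline u))|\le C\big(1+\sum_i G^{ii}\big)$ in $\Omega_\delta$. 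The auxiliary function $v=(\underline u-u)+t\,d-N\,d^2$, $d(x)=\mathrm{dist}(x,\partial\Omega)$, is the standard barrier: using that $\underline u$ is an admissible subsolution, the ellipticity of $G$ along the segment from $D^2u$ to $D^2\underline u$, and the lower bound $\sum_i G^{ii}\ge (n-k+1)/k$ from Section 3, one checks that for $t$ small and $N$ large one has $Lv\le -\varepsilon\big(1+\sum_i G^{ii}\big)$ in $\Omega_\delta$. Then $\Psi=A_1 v+A_2|x|^2$ with $1\ll A_2\ll A_1$ satisfies $L(\Psi\pm T(u-\underline u))\le 0$ in $\Omega_\delta$ and $\Psi\pm T(u-\underline u)\ge 0$ on $\partial\Omega_\delta$, so the maximum principle gives $\pm T(u-\underline u)\le\Psi\le C\,d$; differentiating this in the normal direction at $0$ yields $|u_{\alpha n}(0)|\le C$.

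The remaining double normal estimate $u_{nn}(0)\le C$ is the delicate point. Writing $D^2u(0)=\begin{pmatrix}B'&a\\ a^{T}&\beta\end{pmatrix}$ with $\beta=u_{nn}(0)$ and, by the previous two steps, $|B'|,|a|\le C$, and using $\sigma_j(D^2u(0))=\beta\,\sigma_{j-1}(B')+O(1)$, equation (\ref{DP}) becomes
\[
\beta\Big(\sigma_{k-1}(B')+\alpha(x_0)\,\sigma_{k-2}(B')-\sum_{\ell=1}^{k-2}\alpha_\ell(x_0)\,\sigma_{\ell-1}(B')\Big)=O(1).
\]
Thus the $(k-1)$-type tangential expression in the bracket must tend to $0$ as $\beta\to\infty$, and the plan is to contradict this by showing the bracket is bounded below by a positive constant. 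This is exactly where the admissible subsolution is used: one compares $B'$ with the tangential block of $D^2\underline u(0)$ (the two differ only by the bounded quantity $(\underline u_n-u_n)(0)$ times the second fundamental form of $\partial\Omega$), invokes that $D^2\underline u(0)\in\Gamma_{k-1}$ and that $\underline u$ is a subsolution, and uses the eigenvalue-perturbation lemma of \cite{CNSIII} (the $n$-th eigenvalue of $D^2u(0)$ is $\beta+O(1)$, while the other $n-1$ eigenvalues are within $O(1/\beta)$ of those of $B'$) to conclude the bracket stays away from $0$; this forces $\beta=u_{nn}(0)\le C$. The lower bound for $u_{nn}(0)$ follows from $\lambda(D^2u(0))\in\Gamma_{k-1}\subset\Gamma_1$ together with the bounds on $u_{\alpha\beta}(0)$ and $u_{\alpha n}(0)$.

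Combining the three steps gives $\max_{\partial\Omega}|D^2u|\le C$ with $C$ of the stated form. I expect the double normal estimate to be the main obstacle: the mixed-derivative barrier is by now routine, but extracting the positive lower bound for the tangential bracket --- i.e. showing that the admissible subsolution genuinely prevents the tangential Hessian of $u$ along $\partial\Omega$ from degenerating --- is the step that requires real care and is the reason the admissible subsolution hypothesis is imposed. (Feeding $\max_{\partial\Omega}|D^2u|\le C$ into the interior estimate (\ref{c2-Dir-int}) then yields the global $C^{1,1}$ bound stated in the introduction.)
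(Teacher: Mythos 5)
Your overall architecture (tangential bound from the boundary condition, mixed bound by a subsolution barrier, double normal bound by a leading-coefficient/limit argument) is the same as the paper's, but two steps have genuine gaps, both tied to the feature that distinguishes this equation from the constant-coefficient Hessian case. In the mixed-derivative step you claim $|G^{ij}(T(u-\underline u))_{ij}|\le C\bigl(1+\sum_i G^{ii}\bigr)$. Differentiating the equation along $T_\gamma$ produces the terms $\sum_{\ell=0}^{k-2}|T_\gamma(\alpha_\ell)|\,\sigma_\ell/\sigma_{k-1}$ coming from the $x$-dependence of the coefficients, and these are \emph{not} controlled by $1+\sum_i G^{ii}$; using $\alpha_\ell>0$ one only gets the bound $C\bigl(1+\sum_i G^{ii}+\sum_i G^{ii}|\lambda_i|\bigr)$, since $\sum_i G^{ii}\lambda_i=-\alpha+\sum_\ell(k-\ell)\alpha_\ell\,\sigma_\ell/\sigma_{k-1}$. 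Because of this extra term, your barrier $\Psi=A_1v+A_2|x|^2$ is insufficient: the maximum-principle comparison fails precisely when $\sum_i G^{ii}|\lambda_i|$ dominates. The paper uses Guan's stronger barrier $\Psi=A_1v+A_2|x|^2-A_3\sum_{\beta<n}|T_\beta(u-\phi)|^2$, whose last term produces a good term of size $\sum_i G^{ii}|\lambda_i|$ that absorbs the new contribution. Without this (or an equivalent device) your mixed estimate does not close.

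For the double normal bound, your reduction of the equation to $\beta\bigl(\sigma_{k-1}(B')+\alpha\,\sigma_{k-2}(B')-\sum_{\ell\ge1}\alpha_\ell\,\sigma_{\ell-1}(B')\bigr)=O(1)$ is fine, but the key point --- a uniform positive lower bound for the bracket on $\partial\Omega$ --- is asserted rather than proved. Comparing $B'$ with the tangential block of $D^2\underline u$ via $u_{\gamma\beta}=\underline u_{\gamma\beta}-(u-\underline u)_n B_{\gamma\beta}$ does not directly give it: the correction $(u-\underline u)_n B_{\gamma\beta}$ has no sign (the second fundamental form is not assumed definite), and the subsolution inequality involves the full Hessian of $\underline u$, not only its tangential block, so no pointwise monotonicity argument applies. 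The actual mechanism (Trudinger's argument as adapted in the paper) works at the minimum point of $F_R(u_{\gamma\beta})+\alpha$ over $\partial\Omega$: one uses concavity of $\tilde F=\lim_R F_R=\frac{\sigma_{k-1}}{\sigma_{k-2}}-\sum_{\ell\ge1}\alpha_\ell\frac{\sigma_{\ell-1}}{\sigma_{k-2}}$, a dichotomy on $(u-\underline u)_n\sum B_{\gamma\beta}\tilde F_0^{\gamma\beta}$ versus $\tilde c/2$, a second barrier $\Phi$ (which here must be modified by the terms $\frac{1}{\eta}(\alpha(x)-\alpha(x_0))$ and $\frac{K}{\eta}|x-x_0|$ because the coefficients depend on $x$, and whose maximum-principle step again needs the $\sum_i G^{ii}|\lambda_i|$-absorbing barrier above), and finally the non-degeneracy $\alpha_\ell>0$ to show $\lambda(D^2u)$ stays in a compact subset of $\Gamma_{k-1}$ (via $\sigma_k/\sigma_{k-1}\le C\sigma_{k-1}^{1/(k-1)}$ and $1/\sigma_{k-1}\to\infty$ near $\partial\Gamma_{k-1}$), so that $m_R\ge c_0$ at every boundary point and Lemma 1.2 of Caffarelli--Nirenberg--Spruck applies. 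Your sketch names the right ingredients but omits this mechanism, which is where the subsolution and the positivity of the $\alpha_\ell$ are actually used; as written, the double normal estimate is a statement of intent rather than a proof.
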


\medskip

\par
We assume that $\phi\in C^{\infty}(\partial\Omega)$ is extended smoothly to $\bar \Omega$ and still denoted $\phi$. Before proceeding the estimate, we recall that our equation can be written as
\begin{eqnarray}\label{equ-G-form}
G(\lambda(D^2 u)): = G_k(\lambda(D^2 u)) + \sum_{\ell=0}^{k-2} \alpha_{\ell}(x) \, G_{\ell}(\lambda(D^2 u))= - \alpha(x),
\end{eqnarray}
where $G_k$ and $G_\ell$ are defined in (\ref{operatorG}). The subsolution condition given in (\ref{subsolution}) is equivalent to
\begin{equation}\label{subsolution1}
\begin{cases}
G(D^2 \underline u) \geq - \alpha(x) & \text{ in } \Omega\\
\underline u = \phi & \text{ on } \partial \Omega.
\end{cases}
\end{equation}

\medskip
We will follow the main idea in \cite{GuanB1, GuanB2} to establish the boundary estimates. Before starting the proof, we want to remark that the boundary estimates in \cite{GuanB1, GuanB2} deal with a general family of elliptic equations $F(D^2 u) = f(\lambda(D^2 u))$ with $f$ satisfying ellipticity and concavity properties. Here, we are working with operator $G(\lambda(D^2 u))$ defined in (\ref{equ-G-form}). As discussed in \S 2, the operator $G$ has desired good algebraic properties: ellipticity and concavity. Therefore, the lemmas in the proof only relying on the algebraic properties follow the way same as \cite{GuanB1, GuanB2}. However, it is not hard to see that our operator $G$ also depends on the variable $x$ due to the non-constants coefficient functions $\alpha_{\ell}(x)$. Because of this new feature, new terms will come out when differentiating the equations and we need to modify the arguments and barrier functions to overcome the new trouble terms.

\medskip

To estimate the second derivatives of $u$ at an arbitrary point on $\partial\Omega$, we may assume that the point is the origin of $\mathbb R^n$ and that the positive $x_n$ axis is in direction of the interior normal to $\partial\Omega$ at $0$. Near the origin $0$, $\partial\Omega$ can be represented as a graph
\begin{eqnarray}\label{bgraph}
x_n = \rho(x') = \frac{1}{2} \sum_{\gamma, \beta < n} B_{\gamma\beta} x_{\gamma}x_{\beta} + O\left( |x'|^3 \right), \ x'= (x_1, \cdots, x_{n-1} ).
\end{eqnarray}
\par
First, notice that $(u- \underline u) (x', \rho(x'))=0$ for any boundary point and it follows that
\begin{eqnarray}\label{dtangent}
(u- \underline u)_{\gamma\beta}(0) = - (u-\underline u)_{n}(0) B_{\gamma\beta} ,  \  \gamma, \beta<n
\end{eqnarray}
Hence, we obtain the second derivative estimate along the tangential directions:
\begin{eqnarray}\label{dtangent1}
\left|u_{\gamma\beta}(0) \right| \leq C, \ \gamma, \beta<n.
\end{eqnarray}

\medskip
\par
Next, we proceed to estimate $u_{\gamma n}(0)$ for $\gamma<n$. We follow the idea from \cite{GuanB1} and employ a barrier function of the following form
\begin{eqnarray}\label{barrierv}
v= (u- \underline u) + t d - \frac{N}{2}d^2,
\end{eqnarray}
where $d$ is the distance function from $\partial\Omega$ and $t, N$ are positive constants to be determined. We take $\delta>0$ small enough such that $d$ is smooth in
$\Omega_{\delta}:=\Omega \cap B_{\delta}(0)$.
The key ingredient is the following lemma which follows from Lemma 2.1 in \cite{GuanB1}.

\begin{lemma}\label{keylemma}
There exist some uniform positive constants $t, \delta, \epsilon$ sufficiently small and $N$ sufficiently large such that $v$ satisfies the following
\begin{equation}
\begin{cases}
G^{ij} v_{ij} \leq -\epsilon \left(1+ \sum_{i=1}^n G^{ii}\right) & \text{ in } \Omega_{\delta}\\
v\geq 0 & \text{ on } \bar \Omega_{\delta}.
\end{cases}
\end{equation}
\end{lemma}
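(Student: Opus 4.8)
The plan is to construct the barrier $v = (u-\underline u) + td - \frac{N}{2}d^2$ explicitly and verify the differential inequality by a careful sign analysis of $G^{ij}v_{ij}$, following the scheme of Guan \cite{GuanB1} but accounting for the extra terms produced by the $x$-dependence of the coefficients $\alpha_\ell(x)$. First I would record the elementary facts: $d$ is smooth in $\Omega_\delta$ with $|\nabla d|\equiv 1$ and bounded second derivatives, and $v\ge 0$ on $\bar\Omega_\delta$ is arranged in the usual way — on $\partial\Omega\cap B_\delta$ one has $u-\underline u = 0$ and $d=0$ so $v=0$; on $\Omega\cap\partial B_\delta$ one uses that $u-\underline u$ is bounded below (it vanishes on $\partial\Omega$ and has bounded gradient) while $td - \frac N2 d^2 = d(t-\frac N2 d)$ is controlled, and $t$ can be chosen so that $v\ge 0$ there after possibly shrinking $\delta$. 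This part is routine and identical to \cite{GuanB1, GuanB2}.

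The heart of the matter is the estimate $G^{ij}v_{ij}\le -\epsilon(1+\sum_i G^{ii})$. I would expand $G^{ij}v_{ij} = G^{ij}(u-\underline u)_{ij} + t\,G^{ij}d_{ij} - N\,G^{ij}d_id_j - N\,d\,G^{ij}d_{ij}$. For the first term, concavity of $G$ in $\Gamma_{k-1}$ (Proposition \ref{cone-concavity}) together with the subsolution inequality \eqref{subsolution1} gives $G^{ij}(u_{ij}-\underline u_{ij}) \le G(D^2u) - G(D^2\underline u) \le -\alpha(x) - G(D^2\underline u) \le 0$; in fact one gets strict negativity away from where $D^2u$ and $D^2\underline u$ agree, which is the standard source of the gain. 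The term $-N\,G^{ij}d_id_j$ is the crucial negative term: since $\nabla d$ is a unit vector and $(G^{ij})$ is positive definite, $G^{ij}d_id_j \ge c\,\lambda_{\min}(G^{ij}) > 0$, but more usefully, following \cite{GuanB1} one splits according to whether $\sum_i G^{ii}$ is large or the "good" directions dominate; when $\sum G^{ii}$ is bounded the term $\frac{n-k+1}{k}$ lower bound on $\sum G^{ii}$ from Proposition \ref{c2-non-deg}'s computation keeps things uniformly elliptic, and when it is large the $-N G^{ij}d_id_j$ term (or rather the refined two-case argument) dominates everything. The terms $t\,G^{ij}d_{ij}$ and $-Nd\,G^{ij}d_{ij}$ are bounded by $C(t + N\delta)\sum_i G^{ii}$, which is absorbed by choosing $t$ small and $\delta$ small relative to $N$.

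The genuinely new point compared with \cite{GuanB1, GuanB2} — and the step I expect to be the main obstacle — is that $G$ depends on $x$ through $\alpha_\ell(x)$, so the inequality $G^{ij}(u-\underline u)_{ij}\le G(D^2u)-G(D^2\underline u)$ must be replaced by $G(x,D^2u)-G(x,D^2\underline u)$ evaluated at the \emph{same} point $x$, which is fine, but when one differentiates or compares one picks up extra terms of the form $\sum_\ell (\alpha_\ell)_i\, G_\ell$ and one must control $\sum_\ell \alpha_\ell G_\ell(D^2\underline u) = -\sum_\ell \alpha_\ell \sigma_\ell(D^2\underline u)/\sigma_{k-1}(D^2\underline u)$. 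Here I would use that $\underline u\in C^2(\bar\Omega)$ is a fixed admissible subsolution, so $\sigma_{k-1}(D^2\underline u)$ is bounded below by a positive constant and $\sigma_\ell(D^2\underline u)$ is bounded, making $G_\ell(D^2\underline u)$ uniformly bounded; combined with $\alpha_\ell\in C^{1,1}(\bar\Omega)$ this keeps all the new terms bounded by a constant, hence absorbable into the $-\epsilon(1+\sum G^{ii})$ once $N$ is large. The lower bound $\sum_i G^{ii}\ge \frac{n-k+1}{k}$ established in the proof of Proposition \ref{c2-non-deg} is what guarantees the constant "$1$" on the right can be produced. With the lemma in hand, the maximum principle applied to $v$ on $\Omega_\delta$ together with the boundary condition yields the gradient-type bound on $(u-\underline u)$ near $0$, which after differentiating along tangential directions (using \eqref{dtangent}) gives the mixed-derivative estimate $|u_{\gamma n}(0)|\le C$; the remaining normal-normal derivative $u_{nn}(0)$ is then handled separately by the standard argument using the equation itself and the bounds on the other entries.
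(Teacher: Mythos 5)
Your scaffolding (the barrier $v=(u-\underline u)+td-\frac N2 d^2$, the expansion of $G^{ij}v_{ij}$, absorbing $(t-Nd)G^{ij}d_{ij}$ by $C(t+N\delta)\sum_iG^{ii}$, and the observation that the $x$-dependence of $\alpha_\ell$ causes no trouble here because the comparison $G(x,D^2u)-G(x,D^2\underline u)$ is taken at the same point) matches the paper. But the core of the lemma --- how to produce the \emph{quantified} term $-\epsilon\left(1+\sum_iG^{ii}\right)$ when $(G^{ij})$ may degenerate in the direction $\nabla d$ --- is not correctly argued. Concavity plus the subsolution inequality only give $G^{ij}(u-\underline u)_{ij}\le G(x,D^2u)-G(x,D^2\underline u)\le 0$; your appeal to ``strict negativity away from where $D^2u$ and $D^2\underline u$ agree'' carries no uniform gain, and your bound $G^{ij}d_id_j\ge c\,\lambda_{\min}(G^{ij})>0$ is of no use because $\lambda_{\min}(G^{ij})$ has no uniform positive lower bound as $|\lambda(D^2u)|\to\infty$. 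The dichotomy you describe (``whether $\sum_iG^{ii}$ is large or the good directions dominate'') is not the one that closes the argument. The paper's (Guan's) dichotomy is on the angle between the unit normals $\nu_\lambda=DG(\lambda)/|DG(\lambda)|$ and $\nu_\mu$, with $\mu=\lambda(D^2\underline u)$: since $\{\mu(x):x\in\bar\Omega\}$ is a compact subset of $\Gamma_{k-1}$, there is a uniform $\beta$ with $\nu_{\mu(x)}-2\beta\mathbf{1}\in\Gamma_n$. If $|\nu_\lambda-\nu_\mu|\ge\beta$, Lemma 2.1 of \cite{GuanB1} upgrades concavity to the quantitative inequality $\sum_iG^{ii}(\lambda)(\mu_i-\lambda_i)\ge\epsilon\left(1+\sum_iG^{ii}\right)$, which is exactly where the gain comes from after choosing $t,\delta$ so that $C(t+Nd)<\epsilon/2$. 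If $|\nu_\lambda-\nu_\mu|<\beta$, then $G^{ii}\ge\frac{\beta}{\sqrt n}\sum_jG^{jj}$ for every $i$, so $-NG^{ij}d_id_j\le-\frac{N\beta}{\sqrt n}\sum_iG^{ii}$ (using $|\nabla d|=1$), and one concludes for $N$ large, splitting further into $|\lambda|\ge R$ (where $\sum_iG^{ii}\ge\frac{n-k+1}{k}$ turns $-C\sum_iG^{ii}$ into $-\epsilon(1+\sum_iG^{ii})$) and $|\lambda|<R$ (where $c_1I\le(G^{ij})\le C_1I$). Without this normal-direction dichotomy your sketch does not yield a uniform $\epsilon$.

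Two smaller points. First, your argument for $v\ge0$ on $\Omega\cap\partial B_\delta$ is insufficient: since $u-\underline u$ vanishes on all of $\partial\Omega$ one only gets $|u-\underline u|\le Cd$, and $td-\frac N2d^2\le t\,d$ with $t$ small cannot beat $-Cd$; what is actually used is $u\ge\underline u$ in $\Omega$ (comparison principle from ellipticity and concavity of $G$, with equal boundary data), after which $v\ge d\,(t-\frac N2 d)\ge0$ once $\delta\le 2t/N$. Second, the extra terms $\sum_\ell(\alpha_\ell)_pG_\ell$ you worry about do not appear in this lemma at all --- no differentiation of the equation in $x$ is performed here; they enter only later, in the estimate for $G^{ij}\left(T_\gamma(u-\phi)\right)_{ij}$, where they are controlled by $\sum_iG^{ii}|\lambda_i|$ and the modified barrier $\Psi$, not by the present lemma.
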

\begin{proof}
The proof is essentially the same as Lemma 2.1 in \cite{GuanB1} since it only makes use of the algebraic properties of the operator. For completeness, we include an argument here.

\medskip

We note that to ensure $v\geq 0$ in $\bar\Omega_{\delta}$, we may require $\delta \leq \frac{2t}{N}$ for $t, N$ to be fixed later. By direct computation, we have
\begin{eqnarray}
G^{ij} v_{ij} &=& G^{ij} (u- \underline u)_{ij} + (t-Nd)G^{ij} d_{ij} - N G^{ij} d_id_j \nonumber\\
&\leq& C(t+Nd) \sum_k G^{ii} + \sum_{i, j} G^{ij} (u- \underline u)_{ij}- N G^{ij} d_id_j
\end{eqnarray}
Let $\lambda=\lambda(u_{ij})$ be the eigenvalues of $u_{ij}$ and $\mu=\mu(\underline u_{ij})$ be the eigenvalues of $\underline u_{ij}$. Denote $\nu_{\chi}:= \frac{D G(\chi)}{\left | D G(\chi)\right|}$ to be the unit normal vector to the level hypersurface $\partial\Gamma^{G(\chi)}$ for $\chi\in \Gamma_{k-1}$.
Note that $\{\mu(x)\ : \ x\in \bar \Omega\}$ is a compact subset of $\Gamma_{k-1}$. There exists a uniform constant $\beta \in (0, \frac{1}{2\sqrt{n}})$ such that
\[
\nu_{\mu(x)} - 2\beta {\bf1} \in \Gamma_n, \ \forall x\in \bar\Omega.
\]

\par
We consider two cases:

\par
Case {\bf (a)}: $|\nu_{\lambda} - \nu_{\mu}| \geq \beta$. It follows from Lemma 2.1 in \cite{GuanB1} that
\[
\sum G^{ii} (\lambda) (\mu_i - \lambda_i) \geq \epsilon (1+ \sum G^{ii} (\lambda) )
\]
for some uniform constant $\epsilon>0$. This estimate makes use of the concavity of operator $G(D^2 u)$.
Then, we have
\[
G^{ij} v_{ij} \leq C(t+Nd) \sum G^{ii} - \epsilon (1+ \sum G^{ii}) - N \sum G^{ij} d_i d_j.
\]
Taking $t, \delta$ small enough, such that $C(t+Nd) < \frac{\epsilon}{2}$, we arrive
\[
G^{ij} v_{ij} \leq - \frac{\epsilon}{2} (1+ \sum G^{ii}).
\]
\smallskip

\par
Case {\bf (b)}: $|\nu_{\lambda} - \nu_{\mu}| < \beta$. This implies that
\[
G^{ii} \geq \frac{\beta}{\sqrt{n}} \sum_j G^{jj}, \ \forall \,1\leq i\leq n.
\]
On the other hand, by the concavity of $G$, we have
\[
G^{ij}(D^2 u) ( u_{i j} - \underline u_{ij}) \leq G(D^2 u) - G(D^2 \underline u) \leq 0.
\]
Thus, we have
\begin{eqnarray*}
G^{ij} v_{ij} &\leq& C(t+Nd) \sum G^{ii} - N \sum G^{ij} d_i d_j\\
&\leq &  C(t+Nd) \sum G^{ii} - \frac{N\beta}{\sqrt{n}}\sum_i G^{ii} \sum_{i=1}^n |d_i|^2\\
&=& C(t+Nd) \sum G^{ii} - \frac{N\beta}{\sqrt{n}}\sum_i G^{ii},
\end{eqnarray*}
since $\sum_{i=1}^n |d_i|^2= |\nabla d|^2=1$. So, if we take $t, \delta$ small enough, we obtain
\[
G^{ij} v_{ij}  \leq - C\sum_i G^{ii}\leq - \epsilon (1+ \sum_i G^{ii})
\]
if $|\lambda|>R$ for some large constant $R$.

\par
If we are in the case that $|\lambda |<R$, it follows that $c_1I_n \leq G^{ij} < C_1I_n$ for some constants $C_1\geq c_1\geq 0$. Then, $\sum G^{ij} d_i d_j \geq c_1$ and we have
\begin{eqnarray*}
G^{ij} v_{ij} &\leq& C(t+Nd) \sum G^{ii} - N \sum G^{ij} d_i d_j
\end{eqnarray*}
Taking $t, \delta$ small enough, we get
\[
G^{ij} v_{ij}  \leq - C \leq - \epsilon (1+ \sum_i G^{ii}).
\]
\end{proof}

\medskip

\par
Now, we are in the place to give the estimate for $|u_{\gamma n}(0)|$ for $\gamma<n$. Recall the local representation of boundary $\partial\Omega$ given in (\ref{bgraph}) and let
\[
T_{\gamma}= \partial_{\gamma} + \sum_{\beta<n} B_{\gamma\beta} (x_{\beta}\partial_n - x_n \partial_{\beta})
\]
for $\gamma<n$ and $T_n = \partial_n$. Since $u=\underline u=\phi$ on $\partial\Omega$, we obtain
\[
|T_{\gamma}(u-\phi)| \leq C|x|^2 \ \text{ on } \partial\Omega_{\delta}, \text{ for } 1\leq \gamma\leq n-1.
\]
and, by differentiate equation (\ref{equ-G-form}),
\begin{eqnarray}\label{Talpha}
\left| G^{ij} \left(T_{\gamma}(u-\phi)\right)_{ij}\right| \leq C(1+\sum G^{ii})+ \sum_{\ell=0}^{k-2} \left|T_{\gamma}(\alpha_\ell) \right|\, {\sigma_\ell\over \sigma_{k-1}} \ \text{ in } \Omega_{\delta},
\end{eqnarray}
for $1\leq \gamma\leq n-1$. The last term in the above inequality arises from the non-constant coefficient functions. We can estimate it as following
\begin{eqnarray}
\sum_{\ell=0}^{k-2} \left|T_{\gamma}(\alpha_\ell) \right|\, {\sigma_\ell\over \sigma_{k-1}}\leq C \sum_{\ell=0}^{k-2} {\sigma_\ell\over \sigma_{k-1}} \leq C(1+ \sum_i G^{ii} |\lambda_i|).
\end{eqnarray}
The second inequality follows is derived from the non-degeneracy of the equation, more precisely
\begin{eqnarray}
\sum_i G^{ii} |\lambda_i| &\geq & \sum_i \left\{\left({\sigma_k\over \sigma_{k-1}}\right)^{ii} \, \lambda_i - \sum_{\ell=0}^{k-2} \alpha_\ell \, \left({\sigma_\ell \over \sigma_{k-1} }\right)^{ii} \, \lambda_i \right\}
\nonumber\\
&=&
{\sigma_k\over \sigma_{k-1}} + \sum_{\ell=0}^{k-2} \alpha_\ell \, {\sigma_\ell\over \sigma_{k-1} } + \sum_{\ell=0}^{k-2} \alpha_\ell\, (k-\ell) \, {\sigma_\ell\over \sigma_{k-1} }
\nonumber\\
&\geq &
- \alpha(x) + \delta \, \sum_{\ell=0}^{k-2} {\sigma_\ell\over \sigma_{k-1}}
\end{eqnarray}
where $\delta$ is a positive constant such that $\alpha_\ell\, (k-\ell) \geq \delta$ for $0\leq \ell \leq k-2$. Here we make use the assumption that $\alpha_\ell(x) >0$.

\smallskip

Putting the estimate into (\ref{Talpha}), we obtain
\begin{eqnarray}\label{Tgamma}
\left| G^{ij} \left(T_{\gamma}(u-\phi)\right)_{ij}\right| \leq C\left(1+\sum G^{ii} + \sum G^{ii}|\lambda_i|\right) \ \ \ \text{ in } \Omega_{\delta},
\end{eqnarray}
We remark that the term $\sum G^{ii}|\lambda_i|$ is new trouble term in comparison with the standard Hessian equation or quotient equation. Therefore, we need to construct a barrier function to overcome this term. Here, we use a function introduced by Guan \cite{GuanB1}
\begin{eqnarray}\label{Psi}
\Psi = A_1 v + A_2 |x|^2 - A_3 \sum_{\beta<n} \left|T_{\beta}(u- \phi)\right|^2
\end{eqnarray}
with $v$ giving in (\ref{barrierv}). Then, by using the Lemma \ref{keylemma} and taking $A_1\gg A_2 \gg A_3\gg 1$, we have
\begin{equation}
\begin{cases}
G^{ij} \left(\Psi \pm T_{\gamma}(u-\phi)\right)_{ij} \leq 0 & \text{ in } \Omega_{\delta}\\
\Psi\pm T_{\gamma}(u-\phi) \geq 0 & \text{ on } \partial \Omega_{\delta}.
\end{cases}
\end{equation}
The calculation follows the same as (4.9) in \cite{GuanB1}. The key observation is that the last term in $\Psi$ contributes a good term of the form $\sum G^{ii} |\lambda_i|$ which is used to overcome the new trouble term.
\noindent
Therefore, it follows from the maximum principle
\[
\Psi\geq |T_{\gamma}(u-\phi)|\ \ \ \textit{\rm in } \Omega_{\delta}.
\]Consequently,
\begin{eqnarray}\label{normaltangent}
|u_{\gamma n}(0) | \leq \Psi_{n}(0) + |\underline u_{\gamma n}(0)| \leq C, \ \text{ for } 1\leq \gamma<n.
\end{eqnarray}

\

\par
It remains to prove
\begin{eqnarray}\label{dnormal}
|u_{nn}(0)|\leq C.
\end{eqnarray}
Indeed, we only need to show the uniform upper bound $u_{nn}(0) <C$ since $\Gamma_{k-1} \subset \Gamma_1$ implies $\sum_i u_{ii}(0) \geq 0$ and then the lower bound for $u_{nn}(0)$ follows from the estimates (\ref{dtangent1}) and (\ref{normaltangent}). Again, we follow the main idea in \cite{GuanB1}, which was originally due to Trudinger \cite{Trudinger95}. To obtain the upper bound, we show that there are uniform constants $c_0, R_0$ such that, for all $R>R_0$, $\left(\lambda'(u_{\alpha\beta}(0)), R\right) \in \Gamma_{k-1}$ and
\begin{eqnarray}\label{unn}
G\left( \lambda'(u_{\gamma\beta}(0)), R \right) \geq -\alpha(0) + c_0.
\end{eqnarray}
Here, $\lambda'(u_{\gamma\beta}) = (\lambda'_1, \cdots, \lambda'_{n-1})$ denotes the eigenvalues of the $(n-1)\times (n-1)$ matrix $(u_{\gamma\beta})_{1\leq \gamma, \beta \leq n-1}$. Suppose that we have found such $c_0$ and $R_0$, by Lemma 1.2 in \cite{CNSIII}, it follows from estimate (\ref{dtangent1}) and (\ref{normaltangent}) that we can find $R_1\geq R_0$ such that, if $u_{nn}(0)> R_1$, then
\begin{eqnarray}
G\left(\lambda(u_{ij}(0))\right) \geq G\left( \lambda'(u_{\alpha\beta}(0)), u_{nn}(0) \right) - \frac{c_0}{2}.
\end{eqnarray}
where $\lambda(u_{ij}) = (\lambda_1, \cdots, \lambda_{n})$ denotes the eigenvalues of the $n\times n$ matrix $(u_{ij})_{1\leq i, j \leq n}$. However, the above two inequalities lead
\begin{eqnarray*}
 G\left( \lambda(u_{ij}(0))\right) \geq -\alpha(0) +\frac{c_0}{2}.
\end{eqnarray*}
which contradicts to the equation $G(\lambda(D^2u))(0)) = - \alpha(0)$.
Thus, we obtain the desired bound $u_{nn}(0)\leq R_1$.

\par
For a symmetric $(n-1)\times (n-1)$ matrix $(A_{\gamma\beta})_{1\leq \gamma,\beta\leq n-1}$ with $\left(\lambda'(A_{\gamma\beta}), R\right) \in \Gamma_{k-1}$ when $R>0$ is sufficiently large, we define
\begin{eqnarray*}
F_R(A_{\gamma\beta}) = G\left(\lambda'(A_{\gamma\beta}), R\right), \ \ \ \tilde{F}(A_{\gamma\beta}) =\lim_{R\to \infty} F_R(A_{\gamma\beta})
\end{eqnarray*}
and
\begin{eqnarray*}
m_{R} = \min_{x\in \partial\Omega}\left( F_R (A_{\gamma\beta})  +\alpha(x)\right), \ \ \ \tilde{m} = \lim_{R\to \infty} m_R.
\end{eqnarray*}
We want to show that
\begin{eqnarray}\label{mR}
\tilde{m} \geq c_0
\end{eqnarray}
for some uniform constant $c_0>0$. We assume $\tilde{m}< \infty$ for otherwise we are done. Suppose that $\tilde{m}$ is achieved at $x_0\in \partial\Omega$. Choose a local orthonormal frame around $x_0$ as before.

\noindent
First, we note that $\tilde{F}$ is finite and concave since $G$ is concave and continuous. Moreover, for any symmetric matrix $(A_{\gamma\beta})$ with $\left(\lambda'(A_{\gamma\beta}), R\right) \in \Gamma_{k-1}$, we have
\begin{eqnarray}
\tilde{F}(A_{\gamma\beta}) = \lim_{R\to \infty} G(\lambda'(A_{\gamma\beta}), R)={\sigma_{k-1}\over \sigma_{k-2}} (\lambda'(A_{\gamma\beta}))-\sum_{\ell=1}^{k-2} \alpha_\ell\, {\sigma_{\ell-1}\over \sigma_{k-2}}(\lambda'(A_{\gamma\beta}))
\end{eqnarray}
 Denote $\tilde{F}_0^{\gamma\beta}= {\partial \tilde{F}\over \partial A_{\gamma\beta}}(u_{\gamma\beta}(x_0))$. Using the concavity, we can compute
\begin{eqnarray}\label{tildeF}
&&\tilde{F}_0^{\gamma\beta} (u_{\gamma\beta}(x) - u_{\gamma\beta}(x_0))\\\nonumber
&=&
\left( {\sigma_{k-1}\over \sigma_{k-2}}\right)^{\gamma\beta}\bigg|_{x_0} \, (u_{\gamma\beta}(x) - u_{\gamma\beta}(x_0)) - \sum_{\ell=1}^{k-2}\alpha_\ell(x_0) \,\left({\sigma_{\ell-1}\over \sigma_{k-2}}\right)^{\gamma\beta}\bigg|_{x_0}\, (u_{\gamma\beta}(x) - u_{\gamma\beta}(x_0))
\\\nonumber
&\geq&
{\sigma_{k-1}\over \sigma_{k-2}}(u_{\gamma\beta}(x)) -{\sigma_{k-1}\over \sigma_{k-2}}(u_{\gamma\beta}(x_0))
- \sum_{\ell=1}^{k-2} \alpha_\ell(x_0)\, \left({\sigma_{\ell-1}\over \sigma_{k-2}}(u_{\gamma\beta}(x))-{\sigma_{\ell-1}\over \sigma_{k-2}}(u_{\gamma\beta}(x_0)) \right)
\\\nonumber
&=&
\tilde{F}(u_{\gamma\beta}(x)) - \tilde{F}(u_{\gamma\beta}(x_0)) + \sum_{\ell=0}^{k-2} \left(\alpha(x)- \alpha(x_0)\right) \,{\sigma_{\ell-1}\over \sigma_{k-2}}(u_{\gamma\beta}(x)).
\end{eqnarray}
In particular, this implies, on $\partial\Omega$,
\begin{eqnarray}\label{minimun1}
&&\tilde{F}_0^{\gamma\beta} u_{\gamma\beta}(x) +\alpha(x) - \tilde{F}_0^{\gamma\beta} u_{\gamma\beta}(x_0) -\alpha(x_0)
\\\nonumber
&\geq&
\tilde{F}(u_{\gamma\beta}(x)) + \alpha(x) - \tilde{m} -\sum_{\ell=0}^{k-2} \left(\alpha(x)- \alpha(x_0)\right) \,{\sigma_{\ell-1}\over \sigma_{k-2}}(u_{\gamma\beta}(x))
\geq -K|x-x_0|
\end{eqnarray}
for some constant $K$ depending on $\|\alpha_\ell\|_{C^1}$ and $\|u_{\gamma\beta}\|_{L^\infty}$ with $1\leq \gamma, \beta <n$.
\noindent
 Recall (\ref{dtangent}) on $\partial\Omega$, we have
\begin{eqnarray*}
u_{\gamma\beta}(x) = \underline u_{\gamma\beta}(x) - (u-\underline u)_n(x) B_{\gamma\beta}(x).
\end{eqnarray*}
It follows that
\begin{eqnarray*}
&&(u-\underline u)_n(x_0) \sum_{1\leq \gamma,\beta\leq n-1}B_{\gamma\beta}(x_0)\,\tilde{F}_0^{\gamma\beta}= \tilde{F}_0^{\gamma\beta} \left(\underline u_{\gamma\beta}(x_0) - u_{\gamma\beta}(x_0)\right) \\
&\geq & \tilde{F} (\underline u_{\gamma\beta}(x_0)) - \tilde{F}( u_{\gamma\beta}(x_0))=\tilde{F} (\underline u_{\gamma\beta}(x_0)) +\alpha(x_0) - \tilde{m}.
\end{eqnarray*}
We also note that, by the definition of subsolution $\underline u$ in (\ref{subsolution1}),
\begin{eqnarray*}
F_R (\underline u_{\gamma\beta}(x)) +\alpha(x) \geq F_R (\underline u_{\gamma\beta}(x)) - G\left( \lambda(\underline u_{ij}(x))\right) =  G\left(\lambda'(\underline u_{\gamma\beta}(x)), R\right)- G\left( \lambda(\underline u_{ij}(x))\right)
\end{eqnarray*}
which is strictly positive for $R>0$ large enough, by Lemma 2.1 in \cite{CNSIII} and the ellipticity. It follows that
\begin{eqnarray*}
(u-\underline u)_n(x_0) \sum_{1\leq \gamma,\beta\leq n-1}B_{\gamma\beta}(x_0)\tilde{F}_0^{\gamma\beta}\geq \tilde{c} - \tilde{m}
\end{eqnarray*}
where $\tilde{c} = \liminf_{R\to \infty} c_R$ with
$c_R = \min_{x\in\partial\Omega} \left( G\left(\lambda'(\underline u_{\gamma\beta}(x)), R\right)- G\left( \lambda(\underline u_{ij}(x))\right) \right)>0$.
Therefore, if
\[
(u-\underline u)_n(x_0) \sum_{1\leq \gamma,\beta\leq n-1}B_{\gamma\beta}(x_0)\tilde{F}_0^{\gamma\beta}\leq \frac{\tilde{c}}{2},
\] then $\tilde{m} \geq \frac{\tilde c}{2}$ and we are done. Suppose now that
\[
(u-\underline u)_n(x_0) \sum_{1\leq \gamma,\beta\leq n-1}B_{\gamma\beta}(x_0)\, \tilde F_0^{\gamma\beta} >\frac{\tilde c}{2}
\]
and denote $\eta(x)=\sum_{1\leq \gamma,\beta\leq n-1}B_{\gamma\beta}(x)\, \tilde F_0^{\gamma\beta}$. Note that
\begin{eqnarray}
\eta(x_0) \geq \frac{\tilde c}{2(u-\underline u)_n(x_0)}\geq 2 \epsilon_1\, \tilde c
\end{eqnarray}
for some uniform constant $\epsilon_1$ independent of $R$. We may assume that $\eta\geq \epsilon_1\, \tilde c$ on $\bar\Omega_{\delta}$ by requiring $\delta$ small enough. In $\Omega_{\delta}= \Omega\cap B_{\delta}(x_0)$, consider
\[
\Phi(x) = - (u- \phi)_n(x)+ \frac{1}{\eta(x)}  \tilde F_0^{\gamma\beta} \left( \phi_{\gamma\beta}(x)- u_{\gamma\beta}(x_0)\right)+\frac{1}{\eta(x)}(\alpha(x)- \alpha(x_0))+ {K\over \eta(x)} |x-x_0|
\]
where $K$ is the constant from (\ref{minimun1}). We can check that $\Phi(x)\geq 0$ for $x\in\partial\Omega$ near $x_0$ by using inequality (\ref{tildeF}).

It also follows that $\Phi(x_0) =0$.
Moreover, by (\ref{Tgamma}), we can compute
\begin{eqnarray*}
 G^{ij}\Phi_{ij} \leq C(1+ \sum_i G^{ii}+ \sum_i G^{ii}|\lambda_i|).
\end{eqnarray*}
Therefore, by using the key lemma and the barrier function $\Psi$ given in (\ref{Psi}), we can obtain
\begin{equation}
\begin{cases}
G^{ij} \left(\Psi+\Phi\right)_{ij} \leq 0 & \text{ in } \Omega_{\delta}\\
\Psi+\Phi \geq 0 & \text{ on } \partial \Omega_{\delta}.
\end{cases}
\end{equation}
Applying the maximum principle, we have $\Psi+\Phi \geq 0$ in $\Omega_{\delta}$ and then
\[
\Phi_n (x_0) \geq - \Psi_n(x_0) \geq -C
\]
which gives $u_{nn}(x_0) \leq C$.
\medskip
\par
As the final step, we need to show that this uniform upper bound holds at any point $x\in \partial\Omega$. From the discussion in (\ref{unn})-(\ref{mR}), it suffices to show that, if $R>0$ large enough,
\[
m_R = F_R\left(\lambda'(u_{\gamma\beta}(x_0)), R\right) + \alpha(x_0) \geq c_0.
\]
First, we note that the estimate $u_{nn}(x_0)<C$ together with (\ref{dtangent1}) and (\ref{normaltangent}) imply that all eigenvalues of $(u_{ij}(x_0))_{1\leq i, j\leq n}$ have {\it a priori} upper bound, which tells that eigenvalues of $(u_{ij}(x_0))_{1\leq i, j\leq n}$ are contained in $\Gamma_{k-1}\cap B_C(0)\subset \mathbb R^n$. On the other hand, we claim that the eigenvalues can not touch $\partial\Gamma_{k-1}$. If this is true, then, $\lambda(u_{ij}(x_0))$ is contained in a compact subset of $\Gamma_{k-1}$. Therefore, if $R>0$ large enough,
\[
m_R =  F_R\left(\lambda'(u_{\gamma\beta}(x_0)), R\right) + \alpha(x_0)>0.
\]
So, we only need to show that $\lambda(u_{ij}(x_0))$ can not touch $\partial\Gamma_{k-1}$. Indeed, this is the direct consequence of our non-degeneracy assumption. Recall our equation
\[
 G(\lambda(D^2 u)) = \frac{\sigma_k}{\sigma_{k-1} }- \sum_{\ell=0}^{k-2} \alpha_\ell(x) \frac{\sigma_\ell}{\sigma_{k-1}}=-\alpha(x).
\]
Note that, for $\lambda\in \Gamma_{k-1}$, we still have $\sigma_k(\lambda)\sigma_{k-2}(\lambda) \leq c(n, k) \sigma_{k-1}^2(\lambda)$. This gives
\begin{eqnarray*}
\frac{\sigma_k(\lambda)}{\sigma_{k-1}(\lambda)} \leq c(n, k) \frac{\sigma_{k-1}(\lambda)}{\sigma_{k-2}(\lambda)} \leq \tilde c(n, k) \frac{\sigma_{k-1}(\lambda)}{\sigma_{k-1}^{\frac{k-2}{k-1}}(\lambda)} =  \tilde c(n, k)\sigma_{k-1}^{\frac{1}{k-1}}(\lambda).
\end{eqnarray*}
Then,
\[
\frac{\sigma_k(\lambda)}{\sigma_{k-1}(\lambda)} \leq 0, \ \text{ as } \lambda \rightarrow \partial\Gamma_{k-1}.
\]
By the non-degeneracy assumption ($\alpha_{\ell}(x_0)>0$), if $\lambda(u_{ij}(x_0))\rightarrow \partial\Gamma_{k-1}$,
$G\left(\lambda(u_{ij}(x_0))\right) \to - \infty$,
since ${\sigma_0 \over \sigma_{k-1}}={1 \over \sigma_{k-1}} \to + \infty$ as $\lambda \rightarrow \partial\Gamma_{k-1}$. This contradicts with the condition that $\alpha(x) \in C^{1, 1}(\bar\Omega)$.

 \qed


\section{Further Remarks}

\par
We can also generalize the estimates to equations of the following form
\begin{eqnarray}\label{p1}
\quad \sigma_k\left(\chi(x) + W_u(x)\right) + \alpha(x) \sigma_{k-1} \left(\chi(x) + W_u(x)\right) = \sum_{\ell=0}^{k-2} \alpha_\ell(x) \sigma_{\ell}\left(\chi(x) + W_u(x)\right)
\end{eqnarray}
where $\chi(x)$ is a symmetric $2-$tensor defined on $\mathbb S^n$.
\smallskip
\par
Note that we only need to modify the $C^2$ estimate by considering the test function $\tilde H= {\rm tr}\chi+ H$. Same as before, the commutator term $H\sum_{i=1}^n G^{ii}$ is the good term and we can use this term to dominate the term comes from $\chi$, $\sum_{i=1}^n \chi_{ii} G^{ii}$ since $\chi_{ii}$ is bounded from below. Then, all the rest are the same.
\medskip
\par
By making use of this generalization, we can deal with the following general equations which are closely related to the special Lagrangian equations in dimension 3.

 \begin{eqnarray*}
\sigma_3(W_u)+ a(x) \sigma_2(W_u) + b(x) \sigma_1(W_u) + c(x)=0.
\end{eqnarray*}
Let $W_u = \lambda =\tilde\lambda + \tilde c(x) I_n$ and reduce the equation to the form
\[
\sigma_3(\tilde \lambda) + \alpha(x) \sigma_2(\tilde\lambda) = \beta(x)\sigma_1(\tilde\lambda) + \gamma(x).
\]
To make this equation fit into the frame (\ref{p1}), we want to make $\beta(x)=0$. Note that we have no assumption on the sign of $\alpha(x)$. Indeed, for this case, we do not need $\gamma(x)\geq 0$. Since if $\gamma(x)\leq 0$, then we consider $\tilde\lambda' = - \tilde\lambda$ and the equation reduce to be
\[
\sigma_3(\tilde\lambda' ) + \alpha(x) \sigma_2(\tilde\lambda') = - \gamma(x),
\]
with $-\gamma(x)\geq 0$ and it is solvable for $\tilde\lambda'$.

\noindent
To make sure that we can find $\tilde c(x)$ such that $\beta(x) =0$, we need to assume
\[
b(x) \leq \frac{(n-1)a^2(x)}{2(n-2)}.
\]
From this simple example, we see that it is important to release the sign requirement for $\alpha(x)$ because one might lead to an overdetermined system for the original coefficients $a(x), b(x)$ and $c(x)$ if we still have restriction on $\alpha(x)$.

\

{\small

}
 \end{document}